\newtheorem{theorem}{Theorem}
\newtheorem{problem}{Design problem}
\newtheorem{lemma}{Lemma}
\newtheorem{example}{Example}
\newtheorem*{hypothesis}{\it{Induction hypothesis~for time~$i$}}
\newcommand*\xbar[1]{%
  \hbox{%
    \vbox{%
      \hrule height 0.5pt 
      \kern0.35ex
      \hbox{%
        \kern-0.1em
        \ensuremath{#1}%
        \kern-0.1em
      }%
    }%
  }%
}
\tikzstyle{block} = [draw,rectangle,thick,minimum height=2em,minimum width=2em]
\tikzstyle{sum} = [draw,circle,inner sep=0mm,minimum size=4mm]
\tikzstyle{connector} = [->,thick]
\tikzstyle{line} = [thick]
\tikzstyle{branch} = [circle,inner sep=0pt,minimum size=1mm,fill=black,draw=black]
\tikzstyle{guide} = []
\tikzstyle{snakeline} = [connector, decorate, decoration={pre length=0.2cm,
\renewcommand{\vec}[1]{\ensuremath{\boldsymbol{#1}}} 
\newtheorem{definition}{Definition}
\pgfplotsset{
    /tikz/max node/.style={
        anchor=south,
    },
    /tikz/min node/.style={
        anchor=south,
        name=minimum
    },
    mark min/.style={
        point meta rel=per plot,
        visualization depends on={x \as \xvalue},
        scatter/@pre marker code/.code={%
            \ifx\pgfplotspointmeta\pgfplots@metamin
                \def\markopts{}%
                \coordinate (minimum);
                \node [min node] {
                };
            \else
                \def\markopts{mark=none}
            \fi
            \expandafter\scope\expandafter[\markopts,every node near
coord/.style=olive]
        },%
        scatter/@post marker code/.code={%
            \endscope
        },
        scatter,
    },
    mark max/.style={
        point meta rel=per plot,
        visualization depends on={x \as \xvalue},
        scatter/@pre marker code/.code={%
        \ifx\pgfplotspointmeta\pgfplots@metamax
            \def\markopts{}%
            \coordinate (maximum);
            \node [max node] {
                \pgfmathprintnumber[fixed]{\xvalue},%
                \pgfmathprintnumber[fixed]{\pgfplotspointmeta}
            };
        \else
            \def\markopts{mark=none}
        \fi
            \expandafter\scope\expandafter[\markopts]
        },%
        scatter/@post marker code/.code={%
            \endscope
        },
        scatter
    }
}
\begin{document}

\title[Separated design for networked LQ control]{Separated design of encoder and controller for
\\ networked linear quadratic optimal control}
\author[M.~Rabi]{
  Maben Rabi}
\address[Maben~Rabi]{School of Computing and Electrical Engineering,
  Indian Institute of Technology,
  Mandi, India.}
\email{firstname@iitmandi.ac.in}
\author[C.~Ramesh]{
  Chithrupa Ramesh}
\address[Chithrupa~Ramesh]{School of Electrical Engineering,
  Royal Institute of Technology, Stockholm,
  Sweden.}
\email{firstname.lastname@ee.kth.se}
\author[K.H.~Johansson]{
  Karl Henrik Johansson}
\address[Karl~Johansson]{School of Electrical Engineering,
  Royal Institute of Technology,
  Stockholm, Sweden.}
\email{firstname.lastname@ee.kth.se}
\date{\today}


\begin{abstract}
For a networked control system, we consider the problem of encoder and
controller design. We study a discrete-time linear plant with a finite
horizon performance cost, comprising of a quadratic function of the
states and controls, and an additive communication cost. We study
separation in design of the encoder and controller, along with related
closed-loop properties such as the dual effect and certainty
equivalence. We consider three basic formats for encoder outputs: quantized samples, real-valued samples at event-triggered times, and real-valued samples over additive noise channels. If the controller and encoder are dynamic, then we show that the performance cost is minimized by a separated design: the controls are updated at each time instant as per a certainty equivalence law, and the encoder is chosen to minimize an aggregate quadratic distortion of the estimation error. This separation is shown to hold even though a dual effect is present in the closed-loop system. We also show that this separated design need not be optimal when the controller or encoder are to be chosen from within restricted classes.
\end{abstract}
\maketitle
\section{Introduction}
We consider discrete-time sequential decision problems for a control
loop that has a communication bottleneck between the sensor and the
controller~(Figure~\ref{simpleLoop}). The design problem is to choose in
concert an encoder and a controller. The encoder maps the sensor's raw
data into a causal sequence of channel inputs. Depending on the channel
model adopted in this paper, the encoder performs either sequential
quantization, sampling, or analog companding. The controller maps channel outputs into a causal sequence of control inputs to the plant. Such two-agent problems are generally hard because the information pattern
is non-classical, as the controller has less information than the sensor~\cite{Witsenhausen1971}. This gives scope for the controller to exploit any dual~effect present in the loop, even when the plant is linear~\cite{feldbaum1960dualEffectPaperOne}. These two-agent problems are at the simpler end of a range of design problems arising in networked control systems~\cite{borkarMitterTatikonda2001markovControl,baillieulAntsaklis2007,goodwinSilvaQuevedo2008cdcSurvey,andrievskyMatveevFradkov2010survey}. Naturally, one seeks formulations of these design problems as stochastic optimization problems whose solutions are tractable in some suitable sense.

The classical partially observed linear quadratic Gaussian (LQG) optimal
control problem is a one-agent decision
problem~\cite{wonham1968onTheSeparationTheorem}. Given a linear,
Gauss-Markov plant, one is asked for a causal controller, as a function
of noisy linear measurements of the state, to minimize a quadratic cost
function of states and controls. This problem has a simple and explicit
solution, where the optimal controller `separates' into two policies;
one to generate a minimum mean-squared error estimate of the state from
the noisy measurements, and the other to control the fully observed
Gauss-Markov process corresponding to the estimate. A networked version
of this problem is the following two-agent LQG optimal control
problem~\cite{borkarMitter1997}. Given a linear Gauss-Markov plant and a
channel model, one is asked for an encoder and controller to minimize a
performance cost which is a sum of a communication cost and a quadratic cost on states and controls. The communication cost is charged on decisions at the encoder, which are chosen to satisfy constraints imposed by the channel model. No causal encoding or control policies are, in general, excluded from consideration.
 As in the one-agent version, a certain `separated' design is optimal,
as has been suggested
 in various settings since the
sixties~\cite{larson1967mistaken,sauerMelsa1974continuouslyVariableObservationCosts,fischer1982mistaken,bansalBasar1989automatica,mitter2001ejc,tatikondaSahaiMitter2004,matveevSavkinSystemsAndControlLetters2004,wuArapostathis2005acc,nairEvans2007,leiBaoSkoglundKalle2011,molinHirche2013,yuksel2014tac}.
Precisely, the following combination is optimal: certainty equivalence
controls with a minimum mean-squared estimator of the state, and an
encoder that minimizes a distortion for state estimation at the
controller. The distortion is the average of a sum of squared estimation
errors with time-varying coefficients depending on the coefficients of
the performance cost.
This separation is different from that obtained in
the classical LQG problem, but it is still due to a
linear evolution of the state, and the statistical independence of
noises from all other current and past variables. As in the classical
one-agent version~\cite{shaw1965optimumStochasticControl,jamesRoot1969nonGaussianLQ}, the random variables need not be Gaussian.
 \begin{figure}
 \centering
\includegraphics[width=0.9\textwidth]{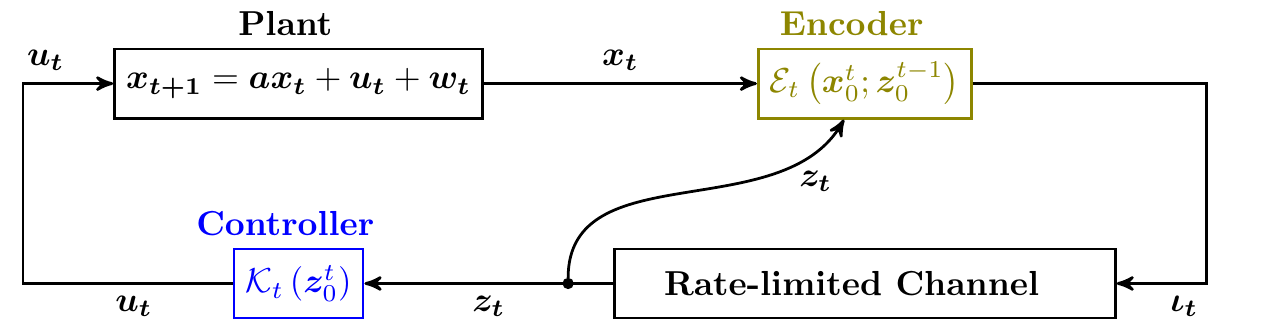}
 \caption{Control over a rate-limited channel with perfect feedback}
 \label{simpleLoop}
\vspace{-8mm}
 \end{figure}
\subsection{Previous works}
In the long history of the two-agent networked LQG problem,
different channel models have been treated, leading to different types of
encoders. We find in these works that the encoder is either a
quantizer, an analog time-dependent compander, or an event-based sampler.

%

When a discrete alphabet channel is treated, the encoder is a time-dependent quantizer. Quantized control has been explored since the sixties, and structural results for this problem have seen spirited discussions over the years~\cite{larson1967mistaken,marleau1972,fischer1982mistaken}. This problem was revisited by Borkar~et~al.~\cite{borkarMitter1997} in recent years, setting off a new wave of interest. Surveys can be found in~\cite{nairEvans2007,minyueFu2012}. For an additive noise channel, the encoder is a time-dependent, possibly non-linear, compander. The corresponding networked LQG problem has been studied in~\cite{bansalBasar1989automatica}, and more recently in~\cite{freudenbergMiddletonBraslavsky2011,gatsis2014}. Analog channels with channel use restrictions lead to an encoder being an event-triggered sampler~\cite{astrom-bernhardsson-cdc}. The networked LQG problem for event-triggered sampling is studied in~\cite{molinHirche2013}.

The above papers suggested separated designs for the two-agent LQG problem with dynamic encoder and controller, and certainty equivalence controls. This is despite other results~\cite{curry1970book,fengLoparo1997activeProbing},
confirming the dual effect in the two-agent networked control problem.
Thus, there can be an incentive to the controller to influence the
estimation error, and yet the optimal controller chooses to ignore this
incentive. Furthermore, for the two-agent LQG problem with
event-triggered sampling, and with zero order hold control between
samples, Rabi~et~al.~\cite{iwsm09} showed through numerical computations
that it is suboptimal to apply controls affine in the minimum mean
square error~(MMSE) estimate.
The optimal controls are nonlinear functions of the received samples.
Thus, the literature does not tell us when separation
holds, and when it does not, for the general class of two-agent
problems.
\subsection{Our contributions}
We make three main contributions. Firstly, we show that for the combination of a linear plant and nonlinear
encoder,
the dual effect is present. This confirms
the results of Curry and
others~\cite{curry1970book,fengLoparo1997activeProbing}, by establishing
through a counter example that there is a dual effect in
the closed-loop system. In fact, each of the three models we allow for
the channel endow the loop with the dual effect. The dual role of the
controller lies in reducing the estimation error in the future, using
the predicted statistics of the future state and knowledge of the
encoding policy. Due to this dual role, we show that, in general,
separated designs need not be optimal for linear plants with non-linear
measurements, even with independent and identically distributed~(IID) Gaussian noise and quadratic costs.
Examples~\ref{Ex:ConstrainedEncoder_NoSep} and~\ref{Ex:ConstrainedControl_NoSep} show instances where the dual effect matters.
Example~\ref{example:predictionErrorDependence} shows how the dual effect in the two-agent networked LQ problem renders useless the techniques that work for the classical, single-agent, partially observed LQ problem.
These examples illustrate the insufficiency of arguments offered
in~\cite{larson1967mistaken,sauerMelsa1974continuouslyVariableObservationCosts,fischer1982mistaken,bansalBasar1989automatica,mitter2001ejc,tatikondaSahaiMitter2004,matveevSavkinSystemsAndControlLetters2004,wuArapostathis2005acc,nairEvans2007,leiBaoSkoglundKalle2011,molinHirche2013,yuksel2014tac}
 for the optimality of separation and certainty equivalent controls.

Our second contribution is a proof for separation in one specific design problem.
We prove that for the dynamic encoder-controller design problem,
it is optimal to apply separation and certainty equivalence. A key instrument in our proof is the class of `controls-forgetting encoders' (introduced in section~\ref{controlsForgettingSection}) which we show to be optimal despite it being a strict subset of the general class of state-based encoders.  We also
notice that the result holds under a variety of schemes for charging
communication costs. For example, it holds even when the encoder is an
analog compander with hard amplitude limits. Our proof does not require
the dual effect to be absent. Hence there is no contradiction with the
fact separation and certainty equivalence are not optimal for other
design problems concerning the same plant-sensor combination. Our work
also provides a direct insight to explain separation or the lack of it,
in the form of a property of the optimal cost-to-go function~{  (Example~\ref{Ex:CertaintyEquivalence_EncDes} in Section~\ref{Examples})}.
Furthermore, we show that when this property does not hold separation is no longer optimal.

Our third contribution points out some subtleties
that arise when dynamic policies are involved.
We explicitly demonstrate that with dynamic encoders for LQ optimal control,
one cannot extend and apply a result of Bar-Shalom and
Tse~\cite{barShalomtse1974dualEffectPaper} which mandates absence of dual effect for certainty equivalence
to be optimal.
The classical notion of a dual effect was introduced for static measurement
policies, and the dual role of the controls has been motivated through
the notion of a probing incentive~\cite{feldbaum1960dualEffectPaperOne}.
We ask if the concept of probing applies unchanged for dynamic measurement
policies and point out some subtleties in answering this question.

In recent years, there has been a resurgence of interest in problems
related to dynamic and decentralized decision making in stochastic
control. Old problems and results have been reexamined and reinterpreted
to find new insights and develop new methods, such as the common
information approach~\cite{Mahajan2009,Nayyar2011}. Others, such
as~\cite{Kulkarni2012}, have sought to reinterpret the proof techniques
used in~\cite{bansal1987}. Following in the path
of~\cite{Witsenhausen1968}, many new counterexamples have been
identified that show optimality of nonlinear strategies for control
problems under non-classical information
patterns~\cite{Lipsa2011,Zaidi2013}. Similarly, drawing from the many
works on two-agent networked LQG
problems~\cite{curry1970book,fengLoparo1997activeProbing,borkarMitter1997,nairEvans2007,minyueFu2012},
we have sought to understand why a structural simplification can be
found in some dynamic decision problems, despite the non-classical
information pattern and the consequent presence of a dual effect.

\subsection{Outline}
The remainder of the paper is organized as follows. In
Section~\ref{twoAgentProblemStatement}, we present a basic problem
formulation, pertaining to encoder and controller design for data-rate
limited channels. In Section~\ref{definitionOfDEandCE}, we discuss the
notion of a dual effect and certainty equivalence, and present a
counterexample to establish that there is a dual effect in the
considered networked control system. In Section~\ref{proofSection}, we
present a proof for separation in the two-agent networked LQG problem.
In Section~\ref{extensions}, we extend our results to other channel
models, including event-triggered samples and additive noise channels.
In Section~\ref{Examples}, we present a number of examples to
illustrate that in general, separation does not hold for constrained design
problems, followed by the conclusions in Section~\ref{discussionSection}.

\section{Problem formulation%
\label{twoAgentProblemStatement}
}
In this section, we describe a version of the two-agent networked LQG problem, corresponding to a rate-limited channel model. We consider an instantaneous, error-free, discrete-alphabet channel and the logarithm of the size of the alphabet is the bit rate. A control system that uses such a channel to communicate between its sensor and controller is depicted in Figure~\ref{simpleLoop}, and comprises of four blocks. Each of these blocks, along with the performance cost, are described below, followed by a description of the design problems under consideration.

\subsection{Plant}
The plant state process $\left\{x_t\right\}$ is scalar, and its evolution law is linear:
\begin{align}
    x_{t+1}
&
    =
     ax_t + u_t + w_t,
\label{plant}
\end{align}
for $ 0 \leq t \le T.$
Here $\left\{u_t\right\}$ is the controls process, and
$\left\{w_t\right\}$ is the plant noise process, which is a sequence of independent random variables with constant variance~$\sigma_w^2$, and zero means.
%
%
The initial state~$x_0$ has a distribution with mean~$\overline{x}_0$ and variance~$\sigma_0^2$. At any time $t$, the noise~$w_t$ is independent of all state, control, channel input, and channel output data up to and including time $t$. We assume that the state process is perfectly observed by the sensor.
\subsection{Performance cost}
The performance cost is a sum of the quadratic cost charged on states and controls, and a communication cost charged on encoder decisions:
\begin{align}
    J 
&
   =
   \mathbb{E}
   \left[  x^2_{T+1} +  p \sum^{T}_{i=1} x^2_i + q \sum^{T}_{i=0} u^2_i
   \right]
   +
    J^{\textrm{Comm}}
\label{objectiveFunction}
\end{align}
where $p > 0$ and $q > 0$ are suitably chosen scalar weights for the
squares of the states and controls, respectively. The communication cost
$J^{\textrm{Comm}}$ is an average quantity that depends on the encoding and
control policies, and the channel model adopted.
\subsection{Channel model%
\label{discreteChannelFixed}
}
The channel model refers to an input-output description of the
communication link from the sensor to the controller. We denote the
channel input at time $t$ by $\iota_t$, the corresponding output
by~$z_t$, and the encoding map generating~$\iota_t$ by $\mathcal{E}_t$.
In Figure~\ref{simpleLoop}, we consider an ideal, discrete alphabet
channel that faithfully reproduces inputs, and thus, $\iota_t = z_t \
\forall t$. The encoder's job is to pick at every time~$t$, the encoding map~$\mathcal{E}_t$ producing a channel output letter from the pre-assigned finite alphabet
$
   z_t
   \in
   \{1, \ldots, N \}, \forall \ t ,
$
where the non-negative integer $N$ is the pre-assigned size of the channel alphabet. Since the alphabet is fixed, we have a hard data-rate constraint at every time. Hence there is no explicit cost attached to communication, so~$J^{\textrm{Comm}} \equiv 0$ in this case. In Section~\ref{extensions}, we consider other channel models that permit the data-rate or energy needed for each transmission to be chosen causally by the encoder.
\subsection{Controller\label{dataBasisAtController}}
The control signal $u_t$ is real valued and is to be computed by a causal policy based on the sequence of channel outputs. The controller has perfect memory, and thus remembers all of its past actions, and the causal sequence of channel outputs. Thus, in general, at every time~$t$ the controller's map takes the form:
\begin{align*}
   \mathcal{K}_t
&
   \ : \
   \Bigl\{
       t,
       \left\{ z_i\right\}^t_0, \left\{
      u_i\right\}_0^{t-1}
   \Bigr\}
  \mapsto
   u_t.
\end{align*}
\subsection{Encoder}
At all times, the encoder knows the entire set of control policies
employed by the controller and the statistical parameters of the plant.
With this prestored knowledge, the encoder works as a causal quantizer
mapping the sequence of plant outputs. Thus, 
the encoder's map takes the form:
\begin{align*}
    \mathcal{E}_t
&
   \ : \
   \left\{
         t, \left\{ x_i \right\}^t_0, \left\{ z_i\right\}^{t-1}_0,
            \left\{ {\mathcal{K}}_i \left( \cdot \right)
            \right\}^{t-1}_0
   \right\}
   \mapsto
   \ z_t . 
\end{align*}
Notice that we do not allow the encoder to directly view the sequence of
inputs to the plant. This subtle point plays an important role in the
examples we present in Section~\ref{discussionSection}.
\subsection{Design problems}
For a given information pattern, different design spaces may arise due to engineering heuristics, hardware or software limitations, etc. Any such design space is a subset of the set of all admissible encoder and controller pairs. We identify four design problems, each associated with its own design space. For these design problems, an adopted channel model can be either the one described in Section~\ref{discreteChannelFixed}, or any of the models from Section~\ref{extensions}. First, we pose a single-agent design problem which has a classical information pattern. 
\begin{problem}[Controller-only Design]
\label{preassignedEncoders}
For the linear plant~(\ref{plant}), the adopted channel model, and a
given admissible set of encoding policies:
\begin{gather*}
 \left\{
    \mathcal{E}_t^{\dagger}
        \left( \, \vec{\cdot}  \, \, ;  \,
               \left\{ z_i \right\}_0^t,
               \left\{ u_i \right\}_0^{t-1} 
        \right)
 \right\}_0^T \; ,
\end{gather*}
the controller-only design problem requires one to pick a causal sequence of control policies $\{\mathcal{K}_t\}_0^T$ to minimize the performance cost~(\ref{objectiveFunction}).
\end{problem}
Next we pose a design problem where the design space is the largest possible non-randomized set of admissible encoder-controller pairs. We consider every causally time-dependent encoder and controller. 
In other words, for this type of design problem, regardless of the choices one makes for channel and communication cost, at any time, the controller can update the control signal using all of the channel outputs up till then. 
\begin{problem}[Dynamic Encoder-Controller Design]
\label{fullyDynamicControls}
For the linear plant~(\ref{plant}) and the adopted channel model, the dynamic encoder-controller design problem requires one to pick causal sequences of encoding and control policies $\{\mathcal{E}_t\}_0^T,\{\mathcal{K}_t\}_0^T$ to minimize the performance cost~(\ref{objectiveFunction}).
\end{problem}
Next we pose a design problem where the controller and encoder must respect a restriction on selecting the control signals or encoding maps. At every time, the control values must be chosen from a restricted set $\mathcal{U}$, such as the interval $(-1,1)$ or the finite set $\{-1,0,1\}$. Likewise, the encoding maps have to be chosen from within restricted sets. For example, the encoding maps may be constrained to consist of two quantization cells $(-\infty,\theta), (\theta,\infty)$, where the encoder threshold $\theta$ must be chosen from a restricted set $\Theta$, say the interval $(-5,5)$. Subject to these constraints, the controller and encoder policies are still to be dynamically chosen.

\begin{problem}[Constrained Encoder-Controller Design]
\label{RestrictedControlsEncoders}
For the linear plant~(\ref{plant}), and the adopted channel model,
the constrained encoder-controller design problem requires one to pick causal sequences of encoding and control policies $\{\mathcal{E}_t\}_0^T, \{\mathcal{K}_t\}_0^T$, subject to the constraints represented by $\theta \in \Theta$ and $u_k \in \mathcal{U}$, to minimize the performance cost~(\ref{objectiveFunction}).
\end{problem}

Next we pose a design problem where the controller must respect not only the information pattern in the dynamic encoder-controller design problem~(Design problem~\ref{fullyDynamicControls}), but must also respect a restriction on updating controls. Basically, the control waveform is generated in a piece-wise `open-loop' way, while epochs and encoding maps are picked using dynamic policies. Let $\epsilon_0, \epsilon_1 \ge 1$, be two random integers
such that $\epsilon_0 + \epsilon_1 = T+1$. Then the two epochs are
$\left\{ 0, \ldots , \epsilon_0 -1 \right\}$ and $\left\{ \epsilon_0 ,
\ldots , T \right\}$. These epochs are chosen by the controller
respecting the inequalities: $1 \le \epsilon_0 < T+1 $ and $ \epsilon_1 =  T+1 - \epsilon_0$, and hence have to be adapted to all the data available at the controller. 
Within an epoch, the controller must pick controls depending only on
data at the start of the epoch. Precisely, given the condition that $t < \epsilon_0$, and given the initial observation~$z_0$, the controls $u_t$ must be a fixed function of $\left( t, z_0 \right)$ regardless of the data $\left\{ z_1, \ldots , z_t \right\}$.
\begin{problem}[Hold-Waveform-Controller and Encoder Design]
\label{ZOHcontrols}
For the linear plant~(\ref{plant}), and the adopted channel model, the
hold-waveform-controller and encoder design problem is to pick a causal sequence of encoding polices $\{\mathcal{E}_t\}_0^T$ in concert with a causal sequence of policies for epochs and controls to minimize the performance cost~(\ref{objectiveFunction}). The controls are restricted to depend on the controller's data in the specific form:
\begin{align*}
u_t & =
       \begin{cases}
               {\mathcal{K}}^0_t   \Bigl(
                               z_0
                                \Bigr)
                & \text{for}~ 0 \le t \le \epsilon_0 -1,~\text{and},
               \\
               {\mathcal{K}}^1_t   \Bigl(  \left\{
                               z_i\right\}^{\epsilon_0 - 1}_0, \left\{
                               u_i\right\}_0^{\epsilon_0 - 1}
                                \Bigr)
                & \text{for}~\epsilon_0 \le t \le T .
       \end{cases}
\end{align*}
\end{problem}
\noindent
A special case of a hold-waveform controller is that of zero order
hold~(ZOH) control where an additional restriction forces
the control waveform be held constant over each epoch.

For all four design problems presented above, we assume the existence of measurable policies minimizing the associated costs. We avoid investigating the necessary technical qualifications except to say that if need be, one may allow randomized polices, or even reject the class of merely measurable policies in favour of the class of universally measurable policies~\cite{bertsekasShreve1978book}.
\section{Dual effect and certainty equivalence%
\label{definitionOfDEandCE}
}
We begin by presenting a definition of dual
effect~\cite{feldbaum1960dualEffectPaperOne} and certainty
equivalence~\cite{josephTou1961separation}. We then present an example to establish that there is a dual effect of the controls in the networked control system introduced in Section~\ref{twoAgentProblemStatement}.
\subsection{Dual effect}
\begin{figure}
\centering
\includegraphics[width=0.7\textwidth]{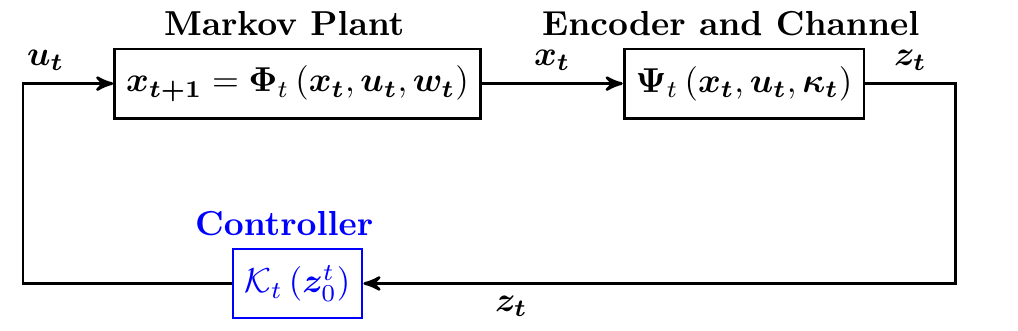}
\caption{Setup for definitions in Section~\ref{definitionOfDEandCE}}
\label{blockDiagramForDualEffectDefinition}
\end{figure}
In a feedback control loop,
the dual effect is an effect that the
controller may see in the rest of the loop. When it is present, the
control laws affect not just the first moment, but also second, third and
higher central moments of the controller's nonlinear filter for the
state. Below, we state this formally for a controlled Markov
process with partial
observations available to the controller:
\begin{gather}
x_{t+1} \  = \Phi_t \Bigl( x_t, u_t , w_t\Bigr),
\ 
 \ \ \ \ \ \
z_t  =  \Psi_t \Bigl( x_t, u_t , \kappa_t\Bigr),
\label{generalMarkov}
\end{gather}
where the sequences $\left\{ x_t \right\}$ and  $\left\{ u_t \right\}$
are the real-valued plant state and control processes, respectively, see Figure~\ref{blockDiagramForDualEffectDefinition}. The
sequence~$\left\{ z_t \right\}$ is the observation process and the
sequences $\bigl\{ w_t\bigr\}$ and $\bigl\{ \kappa_t \bigr\}$ are the plant
noise and observation noise processes, respectively. Assume that all
the primitive random variables are defined on a suitable probability
triple, $\left[\Omega, \mathcal{F}, \mathcal{P} \right]$. Now, consider
two arbitrary admissible sets of control policies:
$
\Bigl\{ {\mathcal{K}} (t, \cdot ) \Bigr\},
\Bigl\{ \widetilde{\mathcal{K}} (t, \cdot ) \Bigr\}$.
Once we pick one such set of control policies, they together with the
measure~$\mathcal{P}$ define the states, observations and controls as
random processes. The choice of policies fixes their statistics.
We can advertise this relationship by (1)~specifying random variables, $x_t$
for example,  in the form $x_t\left( \vec{\omega} ; \mathcal{K} \right)$,
(2)~specifying a filtration, for example, the one generated by the
$z$-process as~${\mathcal{F}}^{{\mathcal{K}} ,z}$, or
(3)~specifying an  expected value of a functional,
$
     \mathbb{E} \left[  F_t \right]
$
for example, in the form
\begin{gather*}
\mathbb{E}_{\mathcal{P}, \mathcal{K}} \left[
F_t \Bigl(  t,
   \left\{ x_i \left( \vec{\omega} ; \mathcal{K} \right) \right\}_0^t,
   \left\{ z_i \left( \vec{\omega} ; \mathcal{K} \right) \right\}_0^t ,
   \left\{ u_i \left( \vec{\omega} ; \mathcal{K} \right) \right\}_0^t
 \Bigr) \right],
\end{gather*}
where~$\vec{\omega}$ stands for any element of the sample space of the
primitive random variables. To minimize the notational burden, we
advertise the dependence on the set of control policies only as needed.
We now define the dual effect by defining its absence.
\begin{definition}[Dual effect]
The networked control system in Figure~\ref{blockDiagramForDualEffectDefinition} is
said to have {\it{no dual effect of second-order}} if
\begin{enumerate}
    \setlength{\itemsep}{0pt}%
    \setlength{\topsep}{0pt}
    \setlength{\partopsep}{0pt}
    \setlength{\parsep}{0pt}
    \setlength{\parskip}{0pt}%
\item{for any two sets $\mathcal{K}, \widetilde{\mathcal{K}}$ of
      admissible control policies, and }
\item{for any two time instants~$t, s$,}
\end{enumerate}
we have
$\mathcal{F}_t^{\mathcal{K}, z} = \mathcal{F}_t^{\widetilde{\mathcal{K}},
z} $ for every~$t$, 
and that for any given event $ X \in \mathcal{F}_t^{{\mathcal{K}} ,z}$,
\begin{multline*}
\setlength{\multlinegap}{1pt}
\mathbb{E}_{\mathcal{P}, \mathcal{K}} \left[
{
\Bigl(x_t ( \vec{\omega} ; \mathcal{K} )
 - \mathbb{E}_{\mathcal{P}, \mathcal{K}}
                \bigl[ x_t ( \vec{\omega} ; \mathcal{K} )
         \bigl|
        \bigl\{ z_i( \vec{\omega} ; \mathcal{K} )\bigr\}_0^s
        , \vec{\omega} \in X
         \bigr.
                \bigr]
\Bigr)
}^2
\Bigl|
        \bigl\{ z_i ( \vec{\omega} ; \mathcal{K} )
   \bigr\}_0^s
        , \vec{\omega} \in X
\Bigr.
\right]
=
\\
\mathbb{E}_{\mathcal{P}, \widetilde{\mathcal{K}}} \left[
{
\Bigl(x_t ( \vec{\omega} ;
                               \widetilde{\mathcal{K}} )
 - \mathbb{E}_{\mathcal{P}, \widetilde{\mathcal{K}}}
                \bigl[ x_t ( \vec{\omega} ;
                              \widetilde{\mathcal{K}} )
         \left|
        \bigl\{ z_i ( \vec{\omega} ; \widetilde{\mathcal{K}} )
        \bigr\}_0^s
        , \vec{\omega} \in X
         \right.
                \bigr]
\Bigr)
}^2
\left|
        \bigl\{ z_i ( \vec{\omega} ;
                              \widetilde{\mathcal{K}}
) \bigr\}_0^s
        , \vec{\omega} \in X
\right.
\right].
\end{multline*}
\end{definition}
Thus, we require equality of the two sets of covariances of
filtering/prediction/smoothing errors, corresponding to any two choices
of control strategies. In the definition above, by choosing one set of
control policies, say $\widetilde{\mathcal{K}}$ as resulting in $u_t =
0$, for all $t$, we obtain the definition of Bar-Shalom and Tse~\cite{barShalomtse1974dualEffectPaper}.

\subsection{Certainty equivalence}
For the controlled Markov process~(\ref{generalMarkov}), consider the general cost
\begin{align*}
J^{\textrm{general}} & =
\mathbb{E}\left[
         L \Bigl( \left\{ x_i \right\}_1^{T-1},  \left\{ u_i
              \right\}_0^T  \Bigr)
          \right],
\end{align*}
where $L$ is a given non-negative cost function.
Imagine that a muse could at
time~$t$ supply to the controller the exact values of all
primitive random variables by informing the controller the exact
element~$\vec{\omega}$ of the sample space~$\Omega$.
With such complete and acausal information, the controller
could, in principle, solve the deterministic optimization problem
\begin{align*}
\inf_u J_t
 \left( u ; \vec{\omega} \right)
 & =
\inf_u
         L \Bigl( \left\{ x_i \left( \vec{\omega} \right)
                    \right\}_0^T,
       \left\{ u_i  \left( \vec{\omega} \right)
              \right\}_0^{t-1},
        \,  u \, ,
       \left\{ u_i  \left( \vec{\omega} \right)
              \right\}_{t+1}^{T}
  \Bigr).
\end{align*}
Let $u_t^*\left( \vec{\omega} \right)$ be an optimal control law for
this deterministic optimization problem. We now state the definition
of certainty equivalence from van~der~Water and
Willems~\cite{vanDerWaterWillems1981}:
\begin{definition}
\label{ceDefinition}
A certainty equivalence control law for the plant~\eqref{plant} with the performance
cost~\eqref{objectiveFunction} has the form:
\begin{gather*}
\mathbb{E}\left[
     u_t^*\left( \vec{\omega} \right)
  \bigl|
         \left\{ z_i \left( \vec{\omega} \right)
                   \right\}_0^t,
         \left\{ u_i \left( \vec{\omega} \right)
                   \right\}_0^{t-1}
  \bigr.
         \right].
\end{gather*}
\end{definition}
\noindent
Clearly, this law is causal. Notice also that its form is tied to the performance cost, and to the statistics of the state and observation processes. It is possible for certainty equivalence control laws to be nonlinear, and such laws can be optimal even when separated designs may not be. For linear plants, they can sometimes be linear or affine, as indicated by the following proposition from \cite{vanDerWaterWillems1981} adapted to our problem.
\begin{lemma}[Affine certainty equivalence laws for linear plants]
For the plant~\eqref{generalMarkov}, with $\Phi_t = ax_t + u_t + w_t$, and the quadratic
performance cost~\eqref{objectiveFunction} with $J^{\textrm{Comm}} = 0$,
the following are certainty equivalence laws:
\begin{align*}
  u_t^{CE}
&
  =
  - \,  k_t^{CE} \Bigl(
            \, a  \cdot
            \mathbb{E}\left[  x_t  \left|
                      {\left\{ z_i \right\}}_{0}^{t} ,
                      {\left\{ u_i \right\}}_{0}^{t-1}
                                  \right.
                     \right]
        +
            \mathbb{E}\left[  w_t  \left|
                      {\left\{ z_i \right\}}_{0}^{t} ,
                      {\left\{ u_i \right\}}_{0}^{t-1}
                                  \right.
                     \right]
              \Bigr) ,
\end{align*}
where the gains $ k_i^{CE} =
   {\frac{ { \beta_{i+1} } }
           { q + \beta_{i+1} }}
$,
$\alpha_i =  \beta_{i+1} + \alpha_{i+1}  $,
$\beta_i =  p +  {\frac{a^2 q { \beta_{i+1}} }
            { q  + \beta_{i+1} }}
$,
$\alpha_{T+1} = 0$, $\beta_{T+1} = 1$.
\label{lemma:affineCElaws}
\end{lemma}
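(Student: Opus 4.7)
The plan is to solve the acausal (``muse'') problem from Definition~\ref{ceDefinition} by backward dynamic programming, read off the pathwise optimal control $u_t^*(\vec{\omega})$, and then apply the conditional expectation prescribed by that definition. Because the muse reveals all primitive noise, each stage becomes a standard finite-horizon deterministic LQ problem to which a quadratic value-function ansatz applies cleanly.

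First, I would fix $\vec{\omega}$, so that $\{w_i(\vec{\omega})\}_0^T$ is a known parameter sequence, and propagate the ansatz
\begin{align*}
V_t(x) \; = \; \beta_t\, x^2 \; + \; 2\gamma_t(\vec{\omega})\, x \; + \; \delta_t(\vec{\omega}),
\end{align*}
initialized at $V_{T+1}(x)=x^2$, i.e.\ $\beta_{T+1}=1,\ \gamma_{T+1}=\delta_{T+1}=0$. At stage $t$, the inner minimization
\begin{align*}
V_t(x_t) \; = \; p\, x_t^2 \, + \, \min_{u_t}\bigl[\, q u_t^2 + V_{t+1}\bigl(a x_t + u_t + w_t(\vec{\omega})\bigr) \,\bigr]
\end{align*}
is a one-dimensional convex quadratic whose unique minimizer is
\begin{align*}
u_t^*(\vec{\omega}) \; = \; -\, \frac{\beta_{t+1}\bigl(a x_t(\vec{\omega}) + w_t(\vec{\omega})\bigr) + \gamma_{t+1}(\vec{\omega})}{q + \beta_{t+1}}.
\end{align*}
Matching coefficients of $x_t^2$ in the completed square then yields the Riccati recursion $\beta_t = p + a^2 q\beta_{t+1}/(q+\beta_{t+1})$ and the claimed gain $k_t^{CE} = \beta_{t+1}/(q+\beta_{t+1})$; the residual scalar $\alpha_t$ listed in the lemma is the $\vec{\omega}$-expectation of the constant term.

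The final step is to take $\mathbb{E}[u_t^*(\vec{\omega})\mid \{z_i\}_0^t,\{u_i\}_0^{t-1}]$. The structural fact driving the collapse is that, by the backward construction, $\gamma_{t+1}(\vec{\omega})$ is an \emph{affine-in-future-noise} functional, i.e.\ a linear combination, with deterministic coefficients, of $w_{t+1}(\vec{\omega}),\ldots,w_T(\vec{\omega})$. Under the noise hypothesis in Section~\ref{twoAgentProblemStatement}, these future noises are independent of every state, control, and channel output up to and including time $t$, hence
\begin{align*}
\mathbb{E}\bigl[\gamma_{t+1}(\vec{\omega}) \,\bigm|\, \{z_i\}_0^t,\{u_i\}_0^{t-1}\bigr] \; = \; 0.
\end{align*}
Linearity of conditional expectation then delivers exactly the advertised form
\begin{align*}
u_t^{CE} \; = \; -\,k_t^{CE}\bigl(\, a\,\mathbb{E}[x_t\mid\cdot] \, + \, \mathbb{E}[w_t\mid\cdot]\,\bigr),
\end{align*}
with the $\mathbb{E}[w_t\mid\cdot]$ term retained because the stated framework does not preclude $z_t$ from carrying information about $w_t$.

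The main obstacle is bookkeeping rather than anything conceptual: one must carry the $\vec{\omega}$-dependent affine and constant coefficients $\gamma_t, \delta_t$ through the induction and verify that the cross terms arising from completing the square collapse exactly into the same quadratic-plus-affine ansatz, so that the $\beta_t$ recursion is insensitive to the particular noise realization. Once that invariance is in hand, the probabilistic step that does the real work — showing that the feedforward correction $-\gamma_{t+1}/(q+\beta_{t+1})$ averages out under the controller's filtration — is immediate from the independence of future noise from current data.
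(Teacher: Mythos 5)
Your proof is correct. Note that the paper itself offers no proof of this lemma: it is imported from van der Water and Willems and stated without argument, so there is nothing to compare line by line. Your derivation is exactly the one that Definition~\ref{ceDefinition} calls for — solve the muse's pathwise deterministic LQ problem by backward induction with the quadratic-plus-affine ansatz $V_t(x)=\beta_t x^2+2\gamma_t(\vec{\omega})x+\delta_t(\vec{\omega})$, read off $u_t^*(\vec{\omega})=-\bigl(\beta_{t+1}(ax_t+w_t)+\gamma_{t+1}\bigr)/(q+\beta_{t+1})$, observe that $\gamma_{t+1}(\vec{\omega})$ is a zero-mean linear combination of the future noises $w_{t+1},\dots,w_T$ (which the standing independence assumption makes conditionally mean-zero given $\{z_i\}_0^t,\{u_i\}_0^{t-1}$), and let the controller's conditional expectation annihilate it. The Riccati recursion and the gain $k_t^{CE}=\beta_{t+1}/(q+\beta_{t+1})$ come out exactly as stated, and your remark about retaining $\mathbb{E}[w_t\mid\cdot]$ is apt. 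One small inaccuracy, immaterial to the control law: the scalar $\alpha_t$ in the lemma, obeying $\alpha_t=\alpha_{t+1}+\beta_{t+1}$, is the coefficient of $\sigma_w^2$ in the \emph{stochastic} value function, not the $\vec{\omega}$-expectation of the pathwise constant $\delta_t(\vec{\omega})$ — the latter also carries a term $-\mathbb{E}\bigl[\gamma_{t+1}^2\bigr]/(q+\beta_{t+1})$ reflecting the muse's informational advantage. Since $\alpha_t$ never enters the control law, this does not affect the claim.
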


\begin{definition}[Certainty equivalence property]
\label{ceDefinitionProperty}
The certainty equivalence property holds for a stochastic control
problem if it is
 optimal to apply the certainty equivalence control law.
\end{definition}
For the stochastic control problem described in
Lemma~\ref{lemma:affineCElaws}, with non-linear measurements that do not
result in a dual effect of the controls, Bar-Shalom and Tse~\cite{barShalomtse1974dualEffectPaper} showed that
the certainty equivalence property holds.

We now consider a simple example, and show that there is a dual effect of the control signal in the closed-loop system presented in Section~\ref{twoAgentProblemStatement}.

\begin{example}
For the plant~(\ref{plant}), let ~$a=1$, $x_0=2$, and $\sigma_0=0$.
Let this information be known to the encoder and the controller, which
simply means that~$z_0=x_0$.
Let the variance $\sigma_w^2=0.7^2$.
For the objective function, let the horizon end at~$T=1$, and let~$p=q=0.01$.
Let the channel alphabet be the discrete set~$\left\{1,2,3\right\}$.

For the given threshold~$\theta=1.6$, let the encoder at~$t=1$ be:
\begin{align}
     \xi_1  \left( x_1 \right)
&
     =
     \begin{cases}
        1  & \text{if} \ x_1 \in \left( -\infty,- \theta \right), \\
        2  & \text{if} \ x_1 \in \left( -\theta,\theta \right), \\
        3  & \text{if} \ x_1 \in \left( \theta, + \infty \right).
     \end{cases}
\label{quantizerExampleDualEffect}
\end{align}
The optimal control law at $t=1$ is
$ u_1 = - {\frac{a}{q+1}} {\widehat{x}_{ \left. 1 \right| 1}}$, where
${\widehat{x}_{\left. 1 \right| 1}} =
     \mathbb{E}\left[  x_1  \left|
           x_0 , u_0 , z_1  \right.
               \right]
$. 
Using the encoding policy $\xi_1$ and the optimal control signal $u_1$, the performance cost with $J^{\textrm{Comm}}=0$ can be written as a function of the control at $t=0$:
\begin{align*}
 J(u_0)
&
 =
     \sigma_w^2
 +
     q u_0^2
 +
     \left( p +  {\frac{qa^2}{q+1}} \right)
     \mathbb{E}\left[  x_1^2  \left|
           x_0 , u_0          \right.
               \right]
 +
  {\overset
    { \triangleq \ \Gamma }
    {\overbrace{
      {\frac{a^2}{q+1}}
       \mathbb{E}\left[   \left.
                         {\left(x_1 - \widehat{x}_{\left. m \right| 1}
                         \right)
                        }^2
                          \right|  x_0 , u_0 , z_1
                \right]
    }}
  }
\end{align*}
In the above expression, $\Gamma$ is the quantization distortion, which is thus proportional to the conditional variance of the controller's minimum mean-squared estimation error of~$x_1$. Notice that $\Gamma$ is a function of~$u_0$, thus resulting in a dual effect of the control signal in the plant-encoder-channel combination. Figure~\ref{figure:dualEffectPlots} shows how the quantization distortion $\Gamma$ depends on $u_0$. The total cost~$J$ is also plotted and the optimal value $u_0^*$ is shown to be different from the certainty equivalent control $u_0^{\mathrm{CE}}$.
\label{example:dualEffect}
\end{example}
\begin{figure}
\centering
\includegraphics[width=0.45\textwidth]{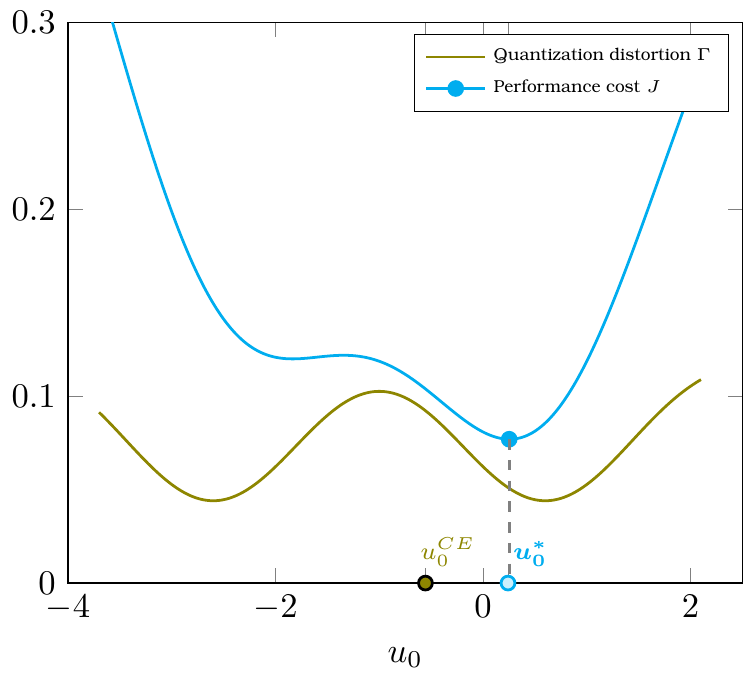}
\caption{
Plot of quantization distortion and performance cost for Example~\ref{example:dualEffect}
}
\label{figure:dualEffectPlots}
\end{figure}
\section{Dynamic encoder-controller design
\label{proofSection}
}
In this section we solve the dynamic encoder-controller design problem~(Design
problem~\ref{fullyDynamicControls}) which
allows both controls and encoders to be dynamic. We work out the details for
the discrete alphabet channel with the fixed alphabet size~$N$.
We begin by examining a known structural property of
optimal encoders. This states that it is optimal for
the encoder to apply a quantizer on the state $x_t$, with the shape
of the quantizer depending only on past quantizer outputs.
Next, we present a structural property for
encoders called {\it{controls-forgetting}}, which leads to separation.
Finally, we show that one optimal encoder for Design
problem~\ref{fullyDynamicControls}
does indeed possess this property, which leads to separation and
certainty equivalence for this problem.

\subsection{Known structural properties of optimal encoders} \label{sufficientStatisticsDiscussion}

Let us now formulate the encoder's Markov decision problem.
Fix the control policies to be the arbitrary, but admissible laws:
\begin{align*}
    u_t
&
    =
    \mathcal{K}^{\dagger}_t  \Bigl(
            \left\{ z_i\right\}^t_0
                             \Bigr).
\end{align*}
Then the optimization problem reduces to one of picking encoding policies.
This is a single-agent, sequential decision problem, and hence one with a
classical information pattern. The action space for this decision problem
is the infinite dimensional function space of discrete-valued
encoders. At time~$t$, the encoder takes as input: the current and
previous states, all previous outputs, and all previous encoding maps.
For convenience, we can view this encoding
map as a function of only the current state but with the rest of the
inputs considered as parameters determining the form of this function.
Thus, without
loss of generality the encoder can be described as the function
\begin{gather*}
\xi_t\left( \cdot \right) : \mathbb{R}
\to \left\{1, \ldots, N \right\}
\end{gather*}
having $x_t$ as its argument with its shape determined by
$
    \Bigl(   \left\{  x_{i} \right\}^{t-1}_0
             \left\{  z_{i} \right\}^{t-1}_0
            \left\{  \xi_{i} \left( \cdot \right) \right\}^{t-1}_0
    \Bigr).
$
Hence the action space at times~$t$ can be described as:
$
\Bigl\{   \xi  \left( \cdot \right)
                   : \mathbb{R}
   \to \left\{1, \ldots, N \right\}, \
   \text{Borel measurable}
\Bigr\}.
$
Identifying encoders as decisions to be picked is not enough, as the
signal~$x_t$
need not be Markov. We utilize the following property.
\begin{lemma}[Striebel's sufficient statistics]
For every design problem we have set up, the signals
\begin{gather*}
     x_t ,
     \,
     \left\{
        z_i
     \right\}_0^t,
     \,
     \left\{
        \xi_{i} \left( \cdot \right)
     \right\}^{t-1}_0
\end{gather*}
form sufficient statistics for the encoding decision at time~$t$.
\end{lemma}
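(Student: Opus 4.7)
\medskip

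\noindent\textbf{Proof proposal.}
The plan is to verify the standard Blackwell-style criterion for sufficient statistics in a classical single-agent MDP: once the control laws $\mathcal{K}^{\dagger}_t(\{z_i\}_0^t)$ are fixed, the encoder's residual problem becomes a fully-observed sequential decision problem, and I want to show that the conditional distribution of every future cost-relevant random variable, given the encoder's entire observed history $(\{x_i\}_0^t,\{z_i\}_0^{t-1},\{\xi_i(\cdot)\}_0^{t-1})$, depends on that history only through the triple $\bigl(x_t,\{z_i\}_0^t,\{\xi_i(\cdot)\}_0^{t-1}\bigr)$. This, combined with the fact that any admissible future policy measurable with respect to the encoder's natural filtration can be rewritten as measurable with respect to the reduced statistic, yields the claim by the standard reduction (see e.g.\ Striebel).

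The key step is a Markov conditional-independence argument. First, because the control laws are prescribed as $u_s=\mathcal{K}^{\dagger}_s(\{z_i\}_0^s)$, the past controls $\{u_i\}_0^{t-1}$ are deterministic functions of $\{z_i\}_0^{t-1}$, and each future control $u_s$ for $s\ge t$ is a deterministic function of $\{z_i\}_0^s$. So the past-control sequence is already encoded in the past-output sequence, and the future-control sequence is determined by future outputs, which in turn come from applying future encoders to future states. Second, by the plant recursion
\[
x_{s+1}=ax_s+u_s+w_s,\qquad s\ge t,
\]
and by the independence assumption on $\{w_s\}$, the conditional distribution of $(x_{t+1},x_{t+2},\ldots,x_{T+1})$ given the encoder's full history and any choice of future encoding maps $\{\xi_s\}_{s\ge t}$ depends on the history only through $x_t$. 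Combining these two observations, the joint conditional law of all future states, outputs, and controls given the encoder's full history agrees with the joint conditional law given only $(x_t,\{z_i\}_0^t,\{\xi_i(\cdot)\}_0^{t-1})$.

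Third, I would express the to-go part of the performance cost in Equation~\eqref{objectiveFunction} as a functional of $\bigl(x_t,\{x_s,u_s\}_{s>t}\bigr)$ plus communication-cost contributions from time $\ge t$, all of which are measurable with respect to the enlarged sample space governed by the above conditional law. Taking conditional expectation of this to-go functional with respect to the full history and with respect to the reduced statistic yields the same value by the previous paragraph, which is precisely the sufficient-statistic condition. The already-accrued portion of the cost (the $x^2_i,u^2_i$ terms for $i<t$) is $\sigma(\{z_i\}_0^{t-1})$-measurable and therefore constant with respect to the remaining optimization, so it does not enter.

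I expect the main obstacle to be bookkeeping rather than any deep issue: one must be careful that the past encoding maps $\{\xi_i(\cdot)\}_0^{t-1}$ are retained in the statistic even though the realised past outputs $\{z_i\}_0^{t-1}$ are already present, because future admissible encoders may depend on these maps through their own measurability requirements, and because the definition of the action space at time $t$ in Section~\ref{sufficientStatisticsDiscussion} parametrises the permissible shapes of $\xi_t(\cdot)$ via the triple listed above. One should also check that the reduction is measurable and that randomized policies (if admitted) do not enlarge the attainable value, which follows from a standard convexity argument once the sufficient-statistic property is established.
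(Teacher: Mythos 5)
Your proposal is correct, and it is essentially a self-contained reconstruction of the argument that the paper does not spell out: the paper's entire ``proof'' of this lemma is the citation ``See Striebel,'' i.e.\ it delegates everything to Striebel's general theorem on sufficient statistics in stochastic optimal control. What you do differently is verify the sufficient-statistic condition directly for this model: (i) with the control laws $\mathcal{K}^{\dagger}_s$ fixed, all past and future controls are measurable functions of the channel-output sequence, so the residual encoder problem has a classical information pattern; (ii) the linear plant recursion together with the assumed independence of $w_s$ from all current and past data makes the conditional law of $(x_{t+1},\ldots,x_{T+1})$, and hence of all future outputs and controls, depend on the encoder's full history only through $x_t$, $\{z_i\}_0^t$, and the future encoding maps; (iii) the cost splits into an accrued part that is already measurable with respect to the retained data and a to-go part whose conditional expectation is therefore the same under the full history and under the reduced triple. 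This is exactly the content of Striebel's theorem specialized to the present setup, so the two routes are the same in substance; yours buys an explicit identification of which model assumptions (noise independence, output-measurable controls) carry the reduction, while the paper's buys brevity and immediate coverage of all the channel models via the generality of the cited result. Your closing remarks on why the past maps $\{\xi_i(\cdot)\}_0^{t-1}$ must be retained, and on measurability and randomized policies, are the right caveats and match the role these maps play later in the common-information discussion of Section~\ref{sufficientStatisticsDiscussion}.
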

\begin{proof}
See Striebel~\cite{striebel1965}.
\end{proof}
Hence, at every time~$t$, performance is not degraded by the encoder
choosing to quantize just $x_t$ instead of quantizing the entire
waveform $\left\{ x_0, \ldots, x_t \right\}$. Of course the shape of the
quantizer is allowed to vary with past encoder shapes, past encoder
outputs, and on past control inputs. But given the sufficient statistics,
the encoder can forget the data: $\left\{ x_0, \ldots, x_{t-1} \right\}$.

Denote by ${\mathscr{D}}_{ t^{-} }^{\rm{con}}$ the data at the controller
just after it has read the channel output~$z_t$ and just before it has
generated the control value~$u_t$. Similarly denote by
${\mathscr{D}}_{ t^{+} }^{\rm{con}}$ the data at the controller just after it
has  generated the control value~$u_t$.
Then
\begin{align*}
   {\mathscr{D}}_{ t^{-} }^{\rm{con}}
&
   =
\Bigl\{
     { \left\{ z_i  \right\} }_0^t ,
     { \left\{ \xi_i \left( \cdot \right)  \right\} }_0^t ,
     { \left\{ u_i  \right\} }_0^{t-1}
\Bigr\},
\\
  {\mathscr{D}}_{ t^{+} }^{\rm{con}}
&
  =
\Bigl\{
       {\mathscr{D}}_{ t^{-} }^{\rm{con}} ,
       u_t
\Bigr\}
\ = \
\Bigl\{
     { \left\{ z_i  \right\} }_0^t ,
     { \left\{ \xi_i \left( \cdot \right)  \right\} }_0^t ,
     { \left\{ u_i  \right\} }_0^{t-1} ,
     u_t
\Bigr\}.
\end{align*}
Also let
$
         {\widehat{x}}_{ \left. t \right| t }
 =
          {\mathbb{E}} \left[  x_t
                \left|  {\mathscr{D}}_{ t^{-} }^{\rm{con}}  \right.
                      \right] .
$

The problem we consider has two decision makers that jointly
minimize a given cost function. The information available to these decision makers
is not the same, and neither is the information available to each agent a subset
of the information available to the agent downstream in the loop. Thus, the
information pattern here is neither classical nor nested. We apply the \emph{common information approach}\footnote{This approach was first proposed by Witsenhausen, as a
conjecture in \cite{Witsenhausen1971}, to deal with multiple decision
makers and non-classical information patterns in a general setting. This
conjecture was shown to be true by Varaiya and Walrand in
\cite{Varaiya1978} for a special case. Our terminology is derived from
\cite{Nayyar2011}, where the conjecture has been studied in detail.} to our problem. This approach allows a designer to treat a problem with multiple decision makers as a classical control problem with a single decision maker that has access to partial state information. When applied to our setup, this approach leads to the following structural result at the encoder. The encoding policy~$\xi_t\left( \cdot \right)$ is selected based on the information available to the controller at the previous time instant namely~${\mathscr{D}}_{ {\left( t -1 \right)}^{+}}^{\rm{con}}$. At times~$t^- , t^+$ respectively, the data~${\mathscr{D}}_{ {\left( t -1 \right)}^{-}}^{\rm{con}},
{\mathscr{D}}_{ {\left( t -1 \right)}^{+}}^{\rm{con}} $ comprise the common information in this problem. The encoding map~$\xi_t\left( \cdot \right)$ is applied to the state~$x_t$, which is private information available to the encoder. A similar approach has been used by others for problems of quantized control~\cite{borkar-mitter-tatikonda,walrandVaraiya1983,yukselIT2013}.

\subsection{Controls-forgetting encoders and separation\label{controlsForgettingSection}}
We now present a structural property of encoders which ensures separation
in design.
Recall the plant~(\ref{plant}) and cost~(\ref{objectiveFunction}), and
define the following control free part of the state:
\begin{align*}
   \zeta_0
&
  =
  x_0,
\\
  \zeta_{i+1}
&
  = x_{i+1} - \sum_{j=0}^i a^{i-1} u_j \ \text{for} \ i \ge 0.
\end{align*}
At the encoder, the change of variables
\begin{align}
    \Bigl(
          x_t, \left\{z_i\right\}_0^{t-1} ;
               \left\{ \mathcal{K}_i \left( \cdot \right) \right\}_0^T
    \Bigr)
&
    \longmapsto
    \Bigl(
          \zeta_t , \left\{z_i\right\}_0^{t-1} ;
               \left\{ \mathcal{K}_i \left( \cdot \right) \right\}_0^T
    \Bigr)
\label{changeOfVariables}
\end{align}
is causal and causally invertible. Hence the statistics
$
\Bigl(
          \zeta_t , \{z_i\}_0^{t-1} ; \{ \mathcal{K}_i (\cdot) \}_0^T
\Bigr)
$
are also sufficient statistics at the encoder.
We now introduce the innovation encoding of Borkar and Mitter~\cite{borkarMitter1997}.
\begin{definition}[Innovation encoder~\cite{borkarMitter1997}]
An encoder with the inputs and outputs:
\begin{align*}
\Bigl(
          \zeta_t , \{z_i\}_0^{t-1} ; \{ \mathcal{K}_i (\cdot) \}_0^T
\Bigr)
\longmapsto
\iota_t
\end{align*}
is admissible and is called an `innovation' encoder.
\end{definition}
\noindent
The networked control system in Figure~\ref{simpleLoop} redrawn with an innovation encoder is shown in Figure~\ref{blockDiagramEquivalentEncoder}. Note that with innovation encoding, the control free part of the state is not affected by the control policies, but obeys the recursion
\begin{align*}
\zeta_{t+1} &= a \zeta_t + w_t.
\end{align*}
For any sequence of causal encoders, one can find an equivalent sequence
of innovation encoders such that when these two sets operate on the same
sequence of plant outputs, they produce two sequences of channel inputs
that are equal with probability one.
Hence, if for a
plant and channel, the dual effect is present in a certain class of causal
encoders, then the dual effect is also present in the equivalent class of
innovation encoders~\cite{fengLoparo1997activeProbing}. This is what the
following example illustrates:
\begin{figure}  
\centering
\includegraphics[width=0.99\textwidth]{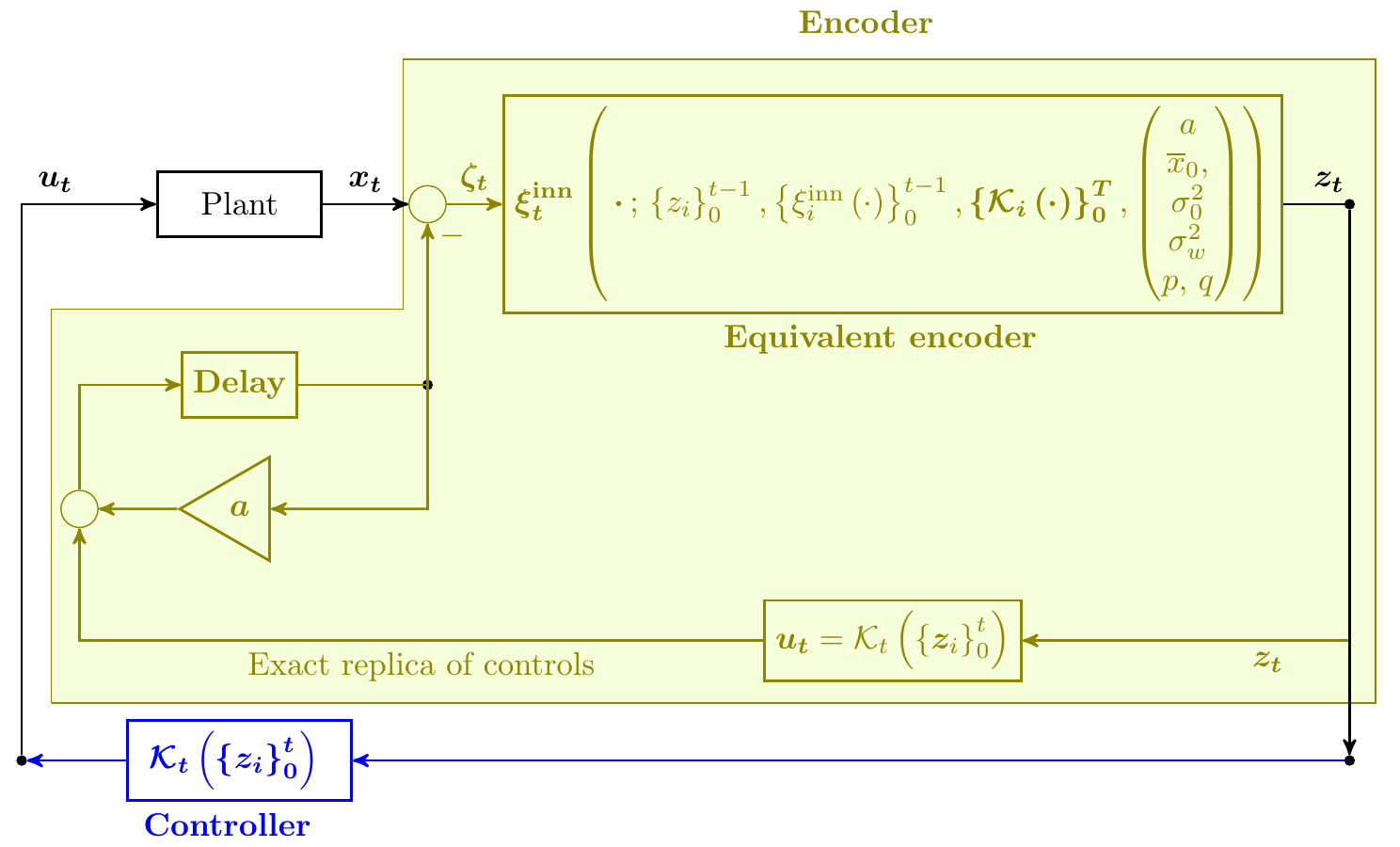}
\caption{The block diagram of Figure~\ref{simpleLoop}
with innovation encoding}
\label{blockDiagramEquivalentEncoder}
\end{figure}
\begin{example}[Dual effect in a loop with fixed innovation encoder] \label{example:dualEffectInnovationEncoding}
We use the same setup as in Example~$1$ with the encoder replaced by an innovation encoder.
For the given threshold~$\theta=1.6$, let the encoder at time~$t=1$
be the following innovation encoder:
\begin{align}
     \xi_1^{\rm{inn}}  \left( \zeta_1 \right)
&
     =
     \begin{cases}
        1  & \text{if} \ a\zeta_1+ {\mathcal{K}}_0
                  \left( z_0 \right)
             \in \left( -\infty,- \theta \right), \\
        2  & \text{if} \ a\zeta_1+ {\mathcal{K}}_0
                  \left( z_0 \right)
             \in \left( -\theta,\theta  \right), \\
        3  & \text{if} \ a\zeta_1+ {\mathcal{K}}_0
                  \left( z_0 \right)
             \in \left( \theta, + \infty  \right).
     \end{cases}
\label{quantizerExampleDualEffectInnovation}
\end{align}
The optimal control law at $t=1$ is still
$ u_1 = - {\frac{a}{q+1}} {\widehat{x}_{ \left. 1 \right| 1}}$, where
${\widehat{x}_{\left. 1 \right| 1}} =
     \mathbb{E}\left[  x_1  \left|
           x_0 , u_0 , z_1  \right.
               \right]
$. For the control $u_0$, notice that
\eqref{quantizerExampleDualEffect} and \eqref{quantizerExampleDualEffectInnovation}
tell us that this innovation encoder~$\xi^{\rm{inn}}_t$ is equivalent to the
causal encoder~$\xi_t$  of Example~\ref{example:dualEffect}.
For the same applied control policy~$\mathcal{K}_0$, and for the same
realizations of primitive random variables, we get
$
    \xi^{\rm{inn}}_1  \left( \zeta_1  \left( \vec{\omega} \right)   \right)
  =
    \xi_1  \left( x_1  \left( \vec{\omega} \right)   \right)
$.
Hence, with probability one the two nonlinear filters for the state given
$x_0, z_1$ are the same. Thus for an event $X \in
{\mathcal{F}}^{\left( x_0, z_1 \right)}$, we have:
\begin{align*}
    {\mathbb{P}}
    \left[
         x_1 \in  X \left| x_0 , \, z_1 =
                           \xi^{\rm{inn}}_1  \left( \zeta_1  \right)
                   \right.
    \right]
&
    =
    {\mathbb{P}}
    \left[
         x_1 \in  X \left| x_0 , \, z_1 =
                           \xi_1  \left( x_1  \right)
                   \right.
    \right].
\end{align*}
Hence the results in Figure~\ref{figure:dualEffectPlots} apply also to
this example.
\end{example}
The encoder~(quantizer) in the loop causes the dual effect. Furthermore,
the encoder's presence renders useless the techniques that worked in the
case of the classical, single-agent, partially observed LQ control problem.
The next  example illustrates this.
\begin{example}
\label{example:predictionErrorDependence}
We examine a scalar system as it evolves from time step 0 to time step
1. We have: $
x_0  \sim {\mathcal{N}} \left( \mu_0 , \sigma_0 \right),$
\begin{align*}
x_1 & = x_0 + u_0 + w_0,
\end{align*}
where $w_0$ is the process noise variable  which is independent of $x_0, u_0$,
and $w_0 \sim  {\mathcal{N}} \left( 0 , \sigma_w \right).$
We adopt the specific quantizing strategy given
below~(on the left in the form of a encoder for $x_t$, and on the
right, in the equivalent, innovation form):
$$
\begin{aligned}
  \xi_0 (x_0) & = \begin{cases}
             -1, & {\text{if}} \ \  x_0 \le 0, \\
             +1, & {\text{if}} \ \  x_0 > 0,
            \end{cases}
\quad \quad \quad &
  \xi_0^{\text{inn}} ({\zeta}_0) & = \begin{cases}
             -1, & {\text{if}} \ \  {\zeta}_0 \le 0, \\
             +1, & {\text{if}} \ \ {\zeta}_0 > 0,
            \end{cases}  \\
  \xi_1 (x_1) & = \begin{cases}
             -1, & {\text{if}} \ \  x_1 \le 0, \\
             +1, & {\text{if}} \ \  x_1 > 0,
            \end{cases}
\quad \quad \quad &
 \xi_1^{\text{inn}} ({\zeta}_1) & = \begin{cases}
             -1, & {\text{if}} \ \ {\zeta}_1  \le -u_0\left(z_0
\right), \\
              +1, & {\text{if}} \ \  {\zeta}_1 > -u_0\left(z_0
\right).
             \end{cases}
\end{aligned}
$$
Since the encoder at time~$0$ is binary, the general control law at time~$0$ has the form:
\begin{align*}
  u_0 (z_0) & = \begin{cases}
             \alpha, & {\text{if}} \ z_0 = -1, \\
             \beta, & {\text{if}} \  z_0 = +1,
            \end{cases}
\end{align*}
where $\alpha, \beta$ are arbitrary real numbers.
The process~${\widehat{x}_{\left. t \right| t}} $ is fully observed at
the controller. We have
${\widehat{x}_{\left. 0 \right| 0}}
=  {\mathbb{E}} \left[ x_{0} \left| z_0 \right. \right]
 $, and as noted in~\cite{yuksel2014tac}, one can write:
\begin{align}
{\widehat{x}_{\left. 1 \right| 1}}
&
 = {\widehat{x}_{\left. 0 \right| 0}}  + u_0 + \xbar{w}_0,
\label{yukselsUpdateEquation}
\end{align}
where the noise-like random variable~$\xbar{w}_0$ is given by:
$
\xbar{w}_0  \triangleq  {\mathbb{E}} \left[ x_{1} \left|z_0, z_1 \right. \right]
                - {\mathbb{E}} \left[ x_{1} \left|z_0 \right.
                  \right].
$
Then one can treat the problem as the control of the
fully observed process~${\widehat{x}_{\left. t \right| t}}$
to minimize the given cost, which can be rewritten as the following
sum of two terms:
\begin{align}
J  & =  {\mathbb{E}} \left[ {\widehat{x}}_{1\left|1\right.}^2
             +  p \cdot {\widehat{x}}_{0\left|0\right.}^2
             +  p \cdot  {\left( x_0 -
{\widehat{x}}_{0\left|0\right. }
\right)}^2 
             +     q u_0^2  \right]
 +
 {\mathbb{E}} \left[
                 {\left( x_1 -
{\widehat{x}}_{1\left|1\right. }
\right)}^2 
\right].
\label{yukselCorrectCost}
\end{align}


Such a treatment actually works for the case of the classical,
single-agent partially observed LQ control problem.
There two special things happen: (1)~the random process~$\{\xbar{w}_t\}$
is statistically independent of the control process~$\{{u}_t\}$ and of
the `state' process~$\{{\widehat{x}}_{t\left|t\right.} \}$,
and (2)~because the dual effect is absent, the second term on the RHS
of~\ref{yukselCorrectCost} does not vary with~$\{{u}_t\}$.
Therefore, by considering~$\{{\widehat{x}}_{t\left|t\right.} \}$
as the process to be controlled, we get a single-agent,
fully observed LQ control problem.

In the two-agent problems considered in this paper,
neither of the above-mentioned special things may happen.
For this specific example, we have calculated, and then
plotted in Figure~\ref{graphsForExamplePredictionError} how
the second moments of~$\xbar{w}_0$ and $x_1 - {\widehat{x}}_{1\left|1\right.} $ vary with~$u_0$. The calculations are presented in Appendix~\ref{App:ExplicitCalcEx3}.
 \begin{figure}
 \centering
\includegraphics[width=0.48\textwidth]{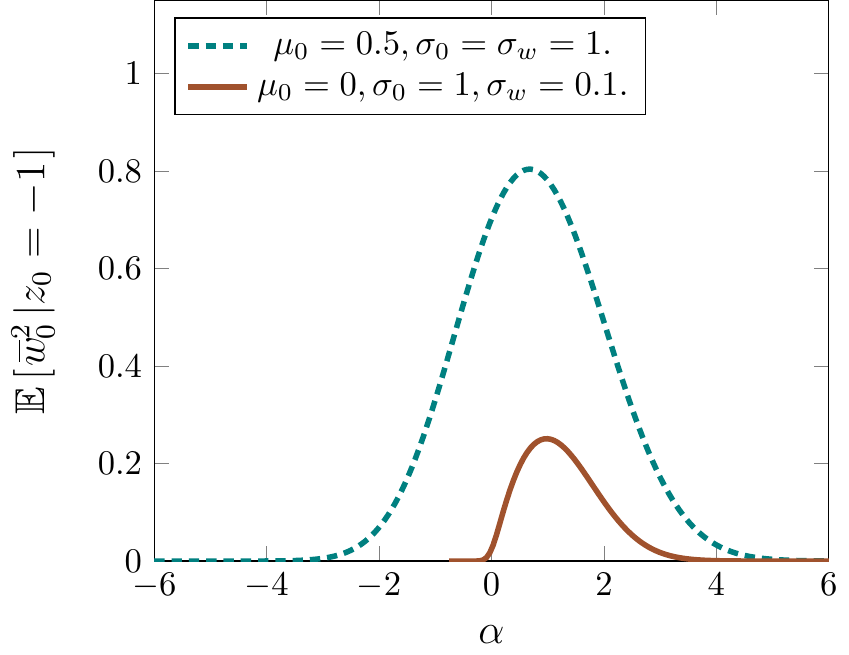}
\includegraphics[width=0.48\textwidth]{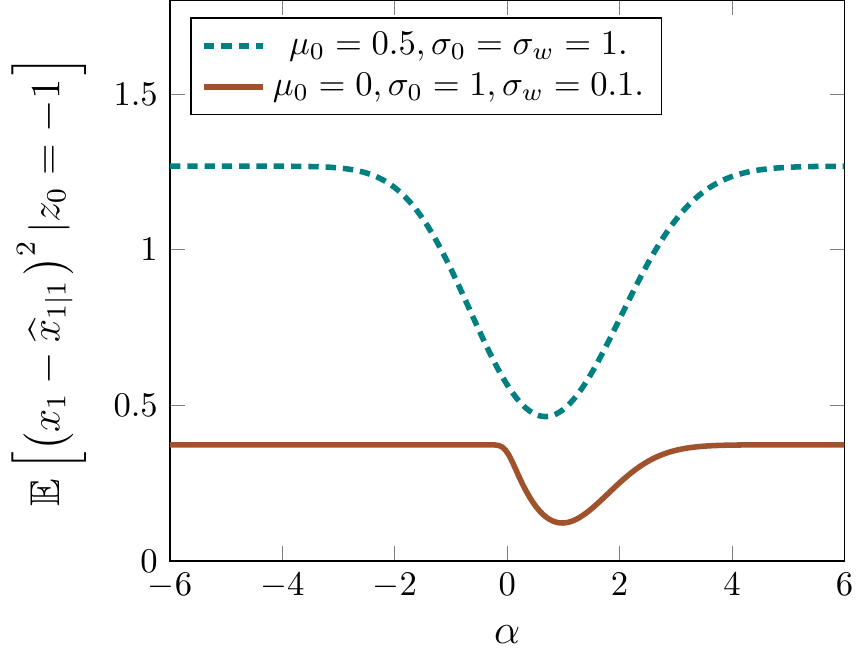}
\caption{Plots for
example~\ref{example:predictionErrorDependence}.}
\label{graphsForExamplePredictionError}
\vspace{-4mm}
 \end{figure}
\hfill $\square$\end{example}
%

Next we define a class of encoders for which at prescribed times~$t,$ the statistics of~$\xbar{w}_t$, $x_{t+1} - {\widehat{x}}_{t+1\left|t+1\right.}$ are independent of the control~$u_t.$
\begin{definition}[Controls-forgetting encoder]
Denote by
$
                \rho_{ \tau \left| \tau - 1  \right. }^{\zeta}
                        \left( \cdot  \right)
$
the conditional density of~$\zeta_{\tau}$ given the
data~${\mathscr{D}}_{{\left( \tau -1 \right)}^-}^{\rm{con}}$.
An admissible encoding strategy is
{{controls-forgetting from time~$\tau$}} if it takes
the form:
\begin{align*}
     \xi_t^{CF,\, \tau} \left( x_t ;
             {\mathscr{D}}_{{\left( t -1 \right)}^-}^{\rm{con}}  \right)
&
    =
    \begin{cases}
            \xi_t^{\dagger} \left( x_t ;
             {\mathscr{D}}_{{\left( t -1 \right)}^-}^{\rm{con}}  \right),
          &
             \text{if} \ t \le \tau,
          \\
            \epsilon_t \left( \zeta_t ; \,
                \rho_{ \tau \left| \tau - 1  \right. }^{\zeta}
                        \left( \cdot  \right) , \,
                \left\{ z_i \right\}_{\tau}^{t-1} ,
                {\left\{ \epsilon_i  \left( \cdot \right) \right\}
                }_{\tau}^{t-1}
                       \right),
          &
             \text{if} \ t \ge  \tau + 1,
    \end{cases}
\end{align*}
where (1)~$
\xi_t^{\dagger} \bigl( \cdot ;
                       {\mathscr{D}}_{{\left( t -1 \right)}^-}^{\rm{con}}
               \bigr)
$
is any admissible policy for encoding at time~$t$, (2)~for~$t
\ge \tau + 1 $ the policies $
            \epsilon_t \bigl( \cdot ;
                \rho_{ \tau \left| \tau - 1  \right. }^{\zeta}
                        \left( \cdot  \right) , \,
                \left\{ z_i \right\}_{\tau}^{t-1} ,
        {\left\{ \epsilon_i  \left( \cdot \right) \right\}}_{\tau}^{t-1}
                       \bigr)
$
are adapted to the data
\begin{align*}
    {\mathscr{D}}_{{\left( t -1 \right)}^+}^{CF, \, \tau}
&
    =
    \left(
                \rho_{ \tau \left| \tau - 1  \right. }^{\zeta}
                        \left( \cdot  \right) , \,
                \left\{ z_i \right\}_{\tau}^{t-1} ,
        {\left\{ \epsilon_i  \left( \cdot \right) \right\} }_{\tau}^{t-1}
    \right) \ \subset \ {\mathscr{D}}_{{\left( t -1 \right)}^+}^{\rm{con}} ,
 \ \
       \text{for} \ t \ge \tau ,
\end{align*}
and (3)~for fixed values of the data~$
{\mathscr{D}}_{{\left( t -1 \right)}^+}^{CF, \, \tau}
$,
the map~$
 \epsilon_t \left( \cdot \right)
$
produces the same output regardless of both the controls~$
\left\{ u_i \right\}_{\tau}^{t}
$
and the control policies~$
  { \left\{  {\mathcal{K}}_i \left( \cdot \right)\right\} }_{t+1}^{T}.
$
\end{definition}

Clearly such controls-forgetting encoders exist. For example,
consider a set of encoders that quantize in sequence $
\zeta_{\tau + 1}, \ldots , \zeta_{T}
$
to minimize the estimation distortion
$
  \sum_{i=\tau + 1}^{T}
         {\mathbb{E}}
      \big[
             {\big(
                \zeta_i - \widehat{\zeta}_{i | i}
             \big)
             }^2
      \big]
$, where
$
    \widehat{\zeta}_{\left. i \right| i}
=
         {\mathbb{E}}
      \bigl[    \left.
                \zeta_i
                \right|
                       {\mathscr{D}}_{{\left( i -1 \right)}^+}^{CF, \, \tau}
      \bigr]
$.
Let the non-negative function~$\psi\left( \cdot \right)$
represent some notion of cost.
For example, $\psi\left( x \right) :=  {x}^2$.
\begin{lemma}[Distortions incurred by controls-forgetting encoders also forget controls]
Fix the time~$t=\tau$ and the distortion measure~$\psi$. If the encoder
is controls-forgetting from time~$\tau$, then for times~$i \ge \tau +1$, the
distortions
$         {\mathbb{E}}
      \left[
               \left.
             \psi\left( x_i - \widehat{x}_{i\left| i \right. } \right)
                \right|
                       {\mathscr{D}}_{{i}^-}^{\rm{con}}
      \right]
$
are statistically independent of the partial set of controls
$
   { \left\{
       u_i
     \right\}
   }_{i=\tau}^{T}
$.
\end{lemma}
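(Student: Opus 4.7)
The plan is to invoke the change of variables $(x_t) \mapsto (\zeta_t)$ to recast the estimation error in a control-free form, and then to exploit property~(3) of the controls-forgetting definition to argue that the controller's posterior of $\zeta_i$ does not depend on the controls issued from time~$\tau$ onward.

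First, because the sum $\sum_{j=0}^{i-1} a^{i-1-j} u_j$ is $\mathscr{D}_{i^-}^{\rm con}$-measurable and $x_i = \zeta_i + \sum_{j=0}^{i-1} a^{i-1-j} u_j$, one gets $\widehat{x}_{i\mid i} = \widehat{\zeta}_{i\mid i} + \sum_{j=0}^{i-1} a^{i-1-j} u_j$, whence $x_i - \widehat{x}_{i\mid i} = \zeta_i - \widehat{\zeta}_{i\mid i}$. The lemma therefore reduces to showing that the conditional law of $\zeta_i - \widehat{\zeta}_{i\mid i}$ given $\mathscr{D}_{i^-}^{\rm con}$ is functionally independent of $\{u_j\}_{j=\tau}^T$.

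The core step is an induction on $t$ showing that, for $t \ge \tau$, each channel output $z_t$ depends on the primitive random variables and on $\{u_j\}_{0}^{\tau-1}$ only, and in particular is free of any $u_j$ with $j \ge \tau$. The base case $t=\tau$ is immediate since $z_\tau = \xi_\tau^{\dagger}(x_\tau;\cdot)$ and $x_\tau$ is a function of $\{x_0,w_0,\ldots,w_{\tau-1}\}$ together with $\{u_j\}_0^{\tau-1}$. For the inductive step, requirement~(3) in the controls-forgetting definition forces $z_{t+1}=\epsilon_{t+1}(\zeta_{t+1};\cdot)$ to be invariant under arbitrary changes of $\{u_j\}_{\tau}^{t+1}$, while the $\zeta$-recursion $\zeta_{t+1}=a\zeta_t+w_t$ is already control-free; composing with the inductive hypothesis on the parameter tuple $(\rho^{\zeta}_{\tau\mid\tau-1},\{z_j\}_{\tau}^{t},\{\epsilon_j\}_{\tau}^{t})$ closes the induction.

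With this in hand, a Bayesian reduction finishes the argument. The prior $\rho^{\zeta}_{\tau\mid\tau-1}$ is by construction $\mathscr{D}_{(\tau-1)^-}^{\rm con}$-measurable, the subsequent $\zeta$-dynamics are control-free, and the likelihoods of $z_j$ given $\zeta_j$ for $j \ge \tau+1$ depend on the controls only through $\mathscr{D}_{(i-1)^+}^{CF,\tau}$. Hence the posterior density of $\zeta_i$ given $\mathscr{D}_{i^-}^{\rm con}$ coincides with its density given only the reduced data $\mathscr{D}_{(i-1)^+}^{CF,\tau}$ augmented by $\{u_j\}_0^{\tau-1}$, so that $\widehat{\zeta}_{i\mid i}$ and the error $\zeta_i-\widehat{\zeta}_{i\mid i}$ are measurable with respect to a sigma-algebra that carries no trace of $\{u_j\}_{j=\tau}^T$; the same then holds for $\mathbb{E}[\psi(x_i-\widehat{x}_{i\mid i})\mid \mathscr{D}_{i^-}^{\rm con}]$. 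The main obstacle is making rigorous the claim that, although $\mathscr{D}_{i^-}^{\rm con}$ nominally records the realized values of $\{u_j\}_{\tau}^{i-1}$, these controls are ancillary for the inference on $\zeta_i$; this must be extracted carefully from requirement~(3) of the controls-forgetting definition, which is precisely what rules out any Bayesian feedback from the post-$\tau$ controls into the posterior.
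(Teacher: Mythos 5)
Your proposal is correct and follows essentially the same route as the paper's own (very terse) proof: reduce via the identity $x_i-\widehat{x}_{i\mid i}=\zeta_i-\widehat{\zeta}_{i\mid i}$, note that the $\zeta$-process is control-free, and argue that the controls-forgetting property makes the controller's posterior on $\zeta_i$ independent of the post-$\tau$ controls. Your induction on the channel outputs and the ancillarity remark merely supply the detail that the paper compresses into the single assertion that $\mathbb{P}\bigl[\zeta_i\in X\bigm|\mathscr{D}_{i^-}^{\rm con}\bigr]$ is control-independent because the encoding maps are controls-forgetting.
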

\begin{proof}
The unconditional statistics of
$
   { \left\{
       \zeta_t
     \right\}
   }
$
are independent of the entire control waveform, no matter what the
encoder is. For times~$i \ge \tau +1$ and for sets
$           X \in {\mathcal{F}}^{z}_{i}
$,
$
         {\mathbb{P}}
      \left[
               \left.
             \zeta_i \in X
                \right|
                       {\mathscr{D}}_{{i}^-}^{\rm{con}}
      \right]
$
is independent of
$
   { \left\{
       u_i
     \right\}
   }_{i=\tau}^{T}
$
because the encoding maps~$\xi_i$ are controls-forgetting from
time~$\tau$. Since $\zeta_t -\widehat{\zeta}_{t\left| t \right. }
= x_t -\widehat{x}_{t\left| t \right. }$, for all $t$, the lemma follows.
\hfill
\end{proof}
\begin{definition}[Controls affine from time~$\tau$]
A controller affine from time~$\tau$ takes the following form:
\begin{align}
    \mathcal{K}_i^{{\rm{mult}}, \, \tau}
   \Bigl(
          \mathscr{D}_{i^-}^{\rm{con}}
   \Bigr)
&
   =
   \begin{cases}
        u_i^{\dagger} ,
           &  \text{if} \ \ i < \tau , \\
        u_i^{{\rm{a f f}}} =  k_i \widehat{x}_{ i  | i} + d_i ,
           & \text{if} \ \ i \geq  \tau ,
   \end{cases}
\label{eqn:affineMultiplexedLaw}
\end{align}
where the controls~$u_i^{\dagger}$ are generated by an admissible
strategy
$
  \{
      \mathcal{K}_i^{\dagger} (\cdot)
  \}_{i=0}^T
$, the controls~$u_i^{\rm{a f f}}$
are generated by an affine strategy
$
\{
     \mathcal{K}_i^{{\rm{a f f}}} (\cdot)
\}_{i=0}^T
$,
with the gains $ \{k_i \}_0^T$ and offsets $ \{d_i\}_0^T $ computed
offline, and
$
  \widehat{x}_{ i  | i} =
    {\mathbb{E}}
\bigl[ \left.  x_i \right| \left\{ y_j \right\}_{j=0}^{i} \bigr] .
$
\label{definition:multiplexedAffineControls}
\end{definition}
\subsection{Preliminary lemmas} ~\label{preliminaryLemmasSection}
The main result ahead is Theorem~\ref{theorem:CElawsForCFencoders}
that states that it is optimal for Design problem~\ref{fullyDynamicControls} to
apply a separated design and certainty equivalence controls.
In this subsection, we do some necessary ground work towards
proving that result.

Once we are prescribed an admissible encoder, the
controls~${\left\{ u_j \right\}}_{j=i}^{T}$ affect only the cost-to-go:
$
  {\mathbb{E}} \left[
                 x_{T+1}^2
              \right]
  + \sum_{j=i}^{T}
  {\mathbb{E}} \left[
                 p x_j^2 + q u_j^2
              \right]
$.
In the classical single agent LQ problem, the `prescribed encoder' is
simply the linear observation process with prescribed signal-to-noise
ratios. There, this cost-to-go can be expressed
as a quadratic function of~${\left\{ u_j \right\}}_{j=i}^{T},
{\left\{ x_j \right\}}_{j=i}^{T}$ and
$ {\left\{ \widehat{x}_{j\left| j \right.}  \right\}}_{j=i}^{T}$.
But in our two agent LQ problem, because of the dual effect, the cost to
go may have a non-quadratic dependence on the
controls~${\left\{ u_j \right\}}_{j=i}^{T}$. However we show that
by restricting to controls-forgetting encoders and affine controls, the
cost-to-go does get a quadratic dependence on controls.
We use this reasoning and dynamic programming to show that
for time~$t=i$ going backwards from~$T$ the following conclusions fall
out:
\begin{itemize}
\item{it is optimal at time~$t=i$ to apply as control a linear function of
      $ \widehat{x}_{i\left| i \right.} $, and,
     }
\item{it is optimal at time~$t=i$ to apply an
encoding map that is controls-forgetting from time~$i-1$.}
\end{itemize}
\begin{lemma}[Optimal control at time~$t=T$]
The optimal control policy at time~$t=T$ is the linear law:
$
u^*_T
 =
- {\frac{a}{1 + q}}
{\widehat{x}}_{ \left. T \right| T }
$,
and the optimum cost-to-go
$
   V^*_T \left( {\mathscr{D}}_{ T^{-} }^{\rm{con}} \right)
   =
   \min_{u_t}
   {\mathbb{E}} \left[  x_{T +1 }^2 + q u_T^2
        \left|  {\mathscr{D}}_{ T^{-} }^{\rm{con}}  \right.
               \right]
$
is the expected value of a quadratic in $x_T$ and
${\widehat{x}}_{ \left. T \right| T }$.
\label{lemma:optimalUT}
\end{lemma}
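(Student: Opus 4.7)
The plan is to compute the cost-to-go $V_T^{*}(\mathscr{D}_{T^-}^{\mathrm{con}})$ directly by pointwise minimization over the single remaining control $u_T$, treating $\mathscr{D}_{T^-}^{\mathrm{con}}$ as fixed. First I would substitute the plant recursion $x_{T+1}=ax_T+u_T+w_T$ into $x_{T+1}^2+q u_T^2$ and take the conditional expectation. Because $w_T$ has zero mean and variance $\sigma_w^2$ and is independent of all data up to time $T$ (in particular of $x_T$ and of any admissible $u_T$, which must be $\mathscr{D}_{T^-}^{\mathrm{con}}$-measurable), the cross terms involving $w_T$ drop out and the conditional expectation collapses to
\begin{equation*}
  a^{2}\,\mathbb{E}\!\left[\,x_T^{2} \,\big|\, \mathscr{D}_{T^-}^{\mathrm{con}}\,\right]
  \;+\; 2a\,u_T\,\widehat{x}_{T|T}
  \;+\; (1+q)\,u_T^{2}
  \;+\; \sigma_w^{2}.
\end{equation*}

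This is a strictly convex quadratic in $u_T$ (coefficient $1+q>0$) whose linear term depends on the conditioning only through the sufficient statistic $\widehat{x}_{T|T}$. Minimization is then immediate: setting the derivative with respect to $u_T$ to zero yields $u_T^{*}=-\frac{a}{1+q}\,\widehat{x}_{T|T}$, which is an admissible control policy because $\widehat{x}_{T|T}$ is $\mathscr{D}_{T^-}^{\mathrm{con}}$-measurable. Substituting back and using the conditional orthogonality decomposition $\mathbb{E}[\,x_T^{2}\mid\mathscr{D}_{T^-}^{\mathrm{con}}\,]=\mathbb{E}[\,(x_T-\widehat{x}_{T|T})^{2}\mid\mathscr{D}_{T^-}^{\mathrm{con}}\,]+\widehat{x}_{T|T}^{2}$, I would rewrite the minimum as
\begin{equation*}
  V_T^{*} \;=\; a^{2}\,\mathbb{E}\!\left[\,(x_T-\widehat{x}_{T|T})^{2}\,\big|\,\mathscr{D}_{T^-}^{\mathrm{con}}\,\right]
  \;+\; \frac{a^{2}q}{1+q}\,\widehat{x}_{T|T}^{2} \;+\; \sigma_w^{2},
\end{equation*}
which exhibits $V_T^{*}$ as the conditional expectation of a quadratic in $x_T$ and $\widehat{x}_{T|T}$, as claimed. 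The coefficients are consistent with the boundary values $\beta_{T+1}=1$, $k_T^{CE}=1/(1+q)$ and $\beta_T=p+a^{2}q/(1+q)$ of the recursions in Lemma~\ref{lemma:affineCElaws}.

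There is no serious obstacle in this base case: it is a one-step scalar quadratic minimization and no induction hypothesis on prior stages is invoked. The bookkeeping points I would flag for use later in Section~\ref{proofSection} are that (i)~$u_T^{*}$ is a valid causal policy precisely because it is a function of the $\mathscr{D}_{T^-}^{\mathrm{con}}$-measurable quantity $\widehat{x}_{T|T}$, and (ii)~the structural form ``conditional expectation of a quadratic in $(x_T,\widehat{x}_{T|T})$'' is exactly the invariant that will need to propagate backwards in the dynamic-programming argument leading to Theorem~\ref{theorem:CElawsForCFencoders}. Notably, at the terminal stage no dual-effect argument is required, since there are no further encoding decisions whose distortion could depend on $u_T$; this is precisely what makes stage~$T$ the natural base case, with the subtler controls-forgetting mechanism entering only when $t<T$.
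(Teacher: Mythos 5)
Your proposal is correct and follows essentially the same route as the paper: substitute the plant recursion, use the independence and zero mean of $w_T$ to drop the cross terms, complete the square in $u_T$ to read off $u_T^{*}=-\frac{a}{1+q}\widehat{x}_{T|T}$, and use the orthogonality decomposition of $\mathbb{E}[x_T^2\mid\mathscr{D}_{T^-}^{\rm con}]$ to exhibit $V_T^{*}$ as the conditional expectation of a quadratic in $x_T$ and $\widehat{x}_{T|T}$. Your final expression agrees with the paper's (which writes the same quantity as $\sigma_w^2+\frac{a^2}{1+q}\mathbb{E}[\,q x_T^2+(x_T-\widehat{x}_{T|T})^2\mid\mathscr{D}_{T^-}^{\rm con}\,]$), and your sign on the minimizer is the correct one.
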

\begin{proof}
At time~$T^-$, one is given ${\mathscr{D}}_{ T^{-} }^{\rm{con}}$,
and is asked to pick~$u_T$ to minimize the cost-to-go
\begin{align*}
V_T \left( u_T  ;  {\mathscr{D}}_{ T^{-} }^{\rm{con}}  \right)
&
=
{\mathbb{E}} \left[  x_{T +1 }^2 + q u_T^2
                \left|  {\mathscr{D}}_{ T^{-} }^{\rm{con}}  \right.
             \right] , \\
&
 = \sigma_w^2 +
{\mathbb{E}} \left[ a^2 \,  x_{T }^2 +  2 \, a \, x_{T } u_{T }
                     + \left( 1 + q \right)  u_T^2
                \left|  {\mathscr{D}}_{ T^{-} }^{\rm{con}}  \right.
             \right] ,
\\
&
 = \sigma_w^2 + {\frac{a^2}{1 + q}}
{\mathbb{E}} \left[ \left. q \,  x_{T }^2  +
                      {\left( x_T - {\widehat{x}}_{ \left. T \right| T }
                       \right)
                      }^2
                  \right|
                   {\mathscr{D}}_{ T^{-} }^{\rm{con}}
            \right]
+
              \left( 1 + q  \right)
                      {\left( u_T -
                 {\frac{a}{1+q}}  {\widehat{x}}_{ \left. T \right| T }
                       \right)
                      }^2 ,
\end{align*}
and this lets us prove the Lemma.
\end{proof}
\begin{lemma}[Optimal~$\xi_i$ for separated, quadratic cost-to-go]
\label{lemmaOnQuantizerDistortion}
Fix the time~$t=i$. Consider the dynamic encoder-controller design
problem~(Design problem~\ref{fullyDynamicControls}), for the linear
plant~(\ref{plant}), and the performance cost~(\ref{objectiveFunction}). Suppose that we apply an admissible
controller~$\widetilde{\mathcal{K}}$ along with an
encoder~$\xi_{t}^{CF, i}$ that is controls-forgetting from time~$i$.
Furthermore, suppose that the partial sets of policies:
\begin{gather*}
\ \ \ \ \ \ \ \: \,
\left\{
         \xi_{i+1}^{CF, i} \left(  \cdot \right) ,
          \ldots ,  \:
         \xi_{T}^{CF, i} \left(  \cdot \right)
\right\}
\\
\left\{
        \widetilde{\mathcal{K}}_{i} \left(  \cdot \right) ,
        \widetilde{\mathcal{K}}_{i+1} \left(  \cdot \right) , \ \
          \ldots ,
        \widetilde{\mathcal{K}}_{T} \left(  \cdot \right)
\right\}
\end{gather*}
are chosen such that the following three properties hold:
\begin{enumerate}
\item{the cost-to-go at time~$i$ takes the separated form:
\begin{align*}
     {\mathbb{E}}
    \left[  \left.
       x_{T+1}^2 + p \sum_{j= i}^T x_i^2 + q \sum_{j=i}^T u_i^2
            \right|  \mathscr{D}_{i^+}^{\rm{con}}
    \right]
&
    =
     {\mathbb{E}}
    \left[  \left.
    J_{i}^{\rm{con}} \left( u_i , x_i \right)
            \right|  \mathscr{D}_{i^+}^{\rm{con}}
    \right]
    +
     {\mathbb{E}}
    \left[  \left.
    \Gamma_{i+1}
            \right|  \mathscr{D}_{i^+}^{\rm{con}}
    \right]  ,
\end{align*}
where,
$
    J_{i}^{\rm{con}} \left( u_i , x_i \right)
    =
    \overline{\alpha} + \alpha \, \sigma_w^2
    + \overline{\beta} \, x_i + \widetilde{\beta} \, x_i^2
    + \overline{\nu} \, \widehat{x}_{ \left. i \right| i}
    + \widehat{\nu} \, x_i \widehat{x}_{ \left. i \right| i}
    + \widetilde{\nu} \, \widehat{x}_{ \left. i \right| i}^2 ,
$
and the term $\Gamma_{i+1}$ is a weighted sum of future distortions
and depends only on the random sequence
$
  \left\{
      x_j -\widehat{x}_{j\left| j \right. }
  \right\}_{j=i+1}^{T}
$,
}
\item{the coefficients of the quadratic~$ J_{i}^{\rm{con}}$ may depend on the
control policies~$\bigl\{ \widetilde{\mathcal{K}}_{j}
\left( \cdot \right)\bigr\}_{i}^{T}$ but not on the partial set of
encoding maps
$
\bigl\{
       \xi_{j}^{CF, i} \left(  \cdot \right)
\bigr\}_{i+1}^{T},
$
and,}
\item{the term~$ \Gamma_{i+1} $ depends on the encoding maps
$
\bigl\{
       \xi_{j}^{CF, i} \left(  \cdot \right)
\bigr\}_{i+1}^{T}
$
but not on the partial set of  control
policies~$\bigl\{ \widetilde{\mathcal{K}}_{j}
\left( \cdot \right)\bigr\}_{i}^{T}$.
}
\end{enumerate}
Then, it is optimal to apply an encoding map at time~$t=i$
that does not depend on the data:
$\bigl( u_{i-1} , \, \bigl\{ \widetilde{\mathcal{K}}_{j}
\left( \cdot \right)\bigr\}_{i}^{T} \, \bigr)$.
It also follows that the shapes of the  encoding maps
$
\bigl\{
       \xi_{j}^{CF, i} \left(  \cdot \right)
\bigr\}_{i+1}^{T}
$
and their performance do not depend on the control~$u_{i-1}$.
\end{lemma}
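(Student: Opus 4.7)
The plan is to show that, after applying the tower property and the change of variables $x_i = \zeta_i + \sum_{j=0}^{i-1} a^{i-1-j} u_j$, the choice of $\xi_i$ reduces to a Bayesian quantization problem on $\zeta_i$ whose parameters belong to the data $(\rho^\zeta_{i|i-1}, \{z_k\}_0^{i-1}, \{\xi_k\}_0^{i-1})$ and do not involve $u_{i-1}$ or $\{\widetilde{\mathcal{K}}_j\}_i^T$.

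First, I would take the unconditional expectation of the decomposition in hypothesis~(1) and use the iterated-expectation identities $\mathbb{E}[\widehat{x}_{i|i}] = \mathbb{E}[x_i]$ and $\mathbb{E}[x_i \widehat{x}_{i|i}] = \mathbb{E}[\widehat{x}_{i|i}^2]$ to collapse $\mathbb{E}[J_i^{\rm{con}}]$ into a $\xi_i$-independent piece plus a fixed coefficient times $\mathbb{E}[\widehat{x}_{i|i}^2] = \mathbb{E}[x_i^2] - \mathbb{E}[(x_i - \widehat{x}_{i|i})^2]$. The shift $s_i = \sum_{j=0}^{i-1} a^{i-1-j} u_j$ is a function of the common data between encoder and controller, so $x_i - \widehat{x}_{i|i} = \zeta_i - \widehat{\zeta}_{i|i}$, and the $\xi_i$-dependent part of $\mathbb{E}[J_i^{\rm{con}}]$ reduces to a weighted mean-squared distortion of $\zeta_i$.

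Next, by hypothesis~(3) and the preceding distortions-forget-controls lemma, the term $\mathbb{E}[\Gamma_{i+1}]$ depends on $\xi_i$ only through the future estimation errors $\{\zeta_j - \widehat{\zeta}_{j|j}\}_{j=i+1}^T$, whose joint law is fully determined by the $\zeta$-process, the prior $\rho^\zeta_{i|i-1}$, the partition that $\xi_i$ induces on $\zeta_i$-space, and the fixed downstream maps $\{\epsilon_j\}_{i+1}^T$. Since any encoder acting on $x_i$ and parameterized by $u_{i-1}$ defines the same partition on $\zeta_i$-space via the bijective shift, I would exhibit, for every admissible $\xi_i$, an equivalent innovation encoder whose partition does not depend on $u_{i-1}$ or on $\{\widetilde{\mathcal{K}}_j\}_i^T$ and which achieves the same value of $\mathbb{E}[J_i^{\rm{con}}] + \mathbb{E}[\Gamma_{i+1}]$. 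The optimal $\xi_i$ can therefore be taken with parameters depending only on $(\rho^\zeta_{i|i-1}, \{z_k\}_0^{i-1}, \{\xi_k\}_0^{i-1})$.

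The delicate step will be verifying that this reparameterization is lossless at the level of the controller's posterior: since $s_i$ is known at the controller, knowing the cell of a $\zeta_i$-partition that $\zeta_i$ falls into is exactly the same information as knowing the cell of the corresponding $x_i$-partition that $x_i$ falls into, and the corresponding posteriors over $\zeta_i$ and $x_i$ differ by the deterministic shift $s_i$. Once this equivalence is pinned down, the follow-up claim that the shapes and performance of $\{\xi_j^{CF,i}\}_{i+1}^T$ are independent of $u_{i-1}$ is immediate: by definition of controls-forgetting, each $\epsilon_j$ is a measurable function of $(\zeta_j, \rho^\zeta_{i|i-1}, \{z_k\}_i^{j-1}, \{\epsilon_k\}_i^{j-1})$, none of which depends on $u_{i-1}$ once the preceding step fixes $\xi_i$'s outputs to be independent of $u_{i-1}$.
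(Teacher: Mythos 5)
Your proposal is correct and follows essentially the same route as the paper's proof: both reduce the $\xi_i$-dependent part of the cost to a sum of the current quantization distortion and the future distortion term, and then use the fact that the conditional density of $x_i$ is a control-dependent translate of a control-independent density (equivalently, that the problem is invariant in the innovation coordinate $\zeta_i$) to conclude that the optimal distortion does not depend on $u_{i-1}$. The paper phrases the key step as matching each quantizer $\xi$ for $u_{i-1}=u$ with a cell-translated quantizer $\widetilde{\xi}$ for $u_{i-1}=\widetilde{u}$, which is exactly your bijection between $x_i$-partitions and $\zeta_i$-partitions.
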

\begin{proof}
The proof exploits three facts: Firstly the special form of
$    J_{i}^{\rm{con}} \left( u_i , x_i \right) $  makes the encoder's
performance cost at time~$i$ a sum of a quadratic distortion between
$x_i$ and $ \widehat{x}_{ \left. i \right| i  }$, and a term gathering
distortions at later times. Secondly the minimum of the sum distortion
depends only on the intrinsic shape of the conditional
density~$\rho_{ \left. i \right| i-1  } \left( \cdot \right)$ and not
on its mean.  Thirdly, these facts and the controls-forgetting nature
of later encoding maps allows the encoder to `ignore' the
control~$u_{i-1}$. We now start by writing the cost-to-go as:
\begin{align}
     {\mathbb{E}}
    \left[  \left.
    J_{i}^{\rm{con}} \left( u_i , x_i \right)
    +
    \Gamma_{i+1}
            \right|  \mathscr{D}_{i^+}^{\rm{con}}
    \right]
&
    =
     {\mathbb{E}}
    \left[  \left.
    \overline{\alpha} + \alpha \, \sigma_w^2
    + \overline{\beta} \, x_i + \widetilde{\beta} \, x_i^2
    + \overline{\nu} \, \widehat{x}_{ \left. i \right| i}
    + \widehat{\nu} \, x_i \, \widehat{x}_{ \left. i \right| i}
    + \widetilde{\nu} \, \widehat{x}_{ \left. i \right| i}^2
            \right|  \mathscr{D}_{i^+}^{\rm{con}}
    \right] \nonumber
\\
&
    \ \ \ \
    +
     {\mathbb{E}}
    \left[  \left.
    \Gamma_{i+1}
            \right|  \mathscr{D}_{i^+}^{\rm{con}}
    \right], \nonumber
\\
&
    =
    \overline{\alpha} + \alpha \, \sigma_w^2
    +
     {\mathbb{E}}
    \left[  \left.
     \left(  \overline{\beta} + \overline{\nu} \right)  x_i
    + \left( \widehat{\nu} + \widetilde{\nu} +\widetilde{\beta} \right) x_i^2
            \right|  \mathscr{D}_{i^+}^{\rm{con}}
    \right] \nonumber
\\
&
    \ \ \ \
    -
     \left( \widehat{\nu} + \widetilde{\nu}  \right)
     {\mathbb{E}}
    \left[  \left.
        x_i^2 - \widehat{x}_{ \left. i \right| i}^2
            \right|  \mathscr{D}_{i^+}^{\rm{con}}
    \right]
    +
     {\mathbb{E}}
    \left[  \left.
    \Gamma_{i+1}
            \right|  \mathscr{D}_{i^+}^{\rm{con}}
    \right] , \nonumber
\\
&
    =
    \overline{\alpha} + \alpha \, \sigma_w^2
    +
     {\mathbb{E}}
    \left[  \left.
     \left(  \overline{\beta} + \overline{\nu} \right)  x_i
    + \left( \widehat{\nu} + \widetilde{\nu} +\widetilde{\beta} \right) x_i^2
            \right|  \mathscr{D}_{i^+}^{\rm{con}}
    \right] \nonumber
\\
&
    \ \ \ \
    -
     \left( \widehat{\nu} + \widetilde{\nu}  \right)
     {\mathbb{E}}
    \left[  \left.
       {\left( x_i - \widehat{x}_{ \left. i \right| i}  \right)}^2
            \right|  \mathscr{D}_{i^+}^{\rm{con}}
    \right]
    +
     {\mathbb{E}}
    \left[  \left.
    \Gamma_{i+1}
            \right|  \mathscr{D}_{i^+}^{\rm{con}}
    \right]. \label{writingCostUsingEstimationError}
\end{align}
Given the data~${\mathscr{D}}_{ { \left( i -1\right) }^{+}}^{\rm{con}} $
the part of the cost above that depends
on the encoding map~$\xi_i \left( \cdot \right)$ is
\begin{gather*}
    -
     \left( \widehat{\nu} + \widetilde{\nu}  \right)
     {\mathbb{E}}
    \left[  \left.
       {\left( x_i - \widehat{x}_{ \left. i \right| i}  \right)}^2
            \right|  \mathscr{D}_{i^+}^{\rm{con}}
    \right]
    +
     {\mathbb{E}}
    \left[  \left.
    \Gamma_{i+1}
            \right|  \mathscr{D}_{i^+}^{\rm{con}}
    \right].
\end{gather*}
Notice that the first term is the quantization variance of the
quantizer~$\xi_i \left( \cdot \right)$. This reduction of the encoder's
performance cost  to a sum of current and future quantization
distortions is possible because the term
$ J^{\rm{con}}_{i} \left( u_i , x_i \right)$ has been assumed to
be quadratic in $ x_i$ and $\widehat{x}_{ \left. i \right| i  }$.
The reduced
performance cost of the encoder is a function only of the
quantizer~$\xi_i \left( \cdot \right)$ and the conditional density
   $\rho_{ \left.  i \right| i - 1 }
    \left(  x \left| {\mathscr{D}}_{ {\left( i -1 \right) }^{-} }^{\rm{con}}
              \right.
   \right)
   $.
Indeed, given the data~${\mathscr{D}}_{ {\left( i -1 \right)}^{-} }^{\rm{con}}$
this cost is the following average:
\begin{align*}
 \Gamma_{i}
  \left(  \xi_{i} \left(  \cdot \right) ;
    {\mathscr{D}}_{ { \left(  i -1 \right) }^{+} }^{\rm{con}}
  \right)
&
 =
 \sum_{\text{cells} ~\Delta }
 {\mathbb{P}} \left[  x_{i} \in \Delta
                     \left|
           {\mathscr{D}}_{{\left(i -1\right)}^{-}}^{\rm{con}}, u_{i-1} \right.
            \right] \cdot
\Bigl\{ {\mathbb{E}} \left[   \Bigl.
 \Gamma_{i+1}
  \left(
    {\mathscr{D}}_{ {  i  }^{-} }^{\rm{con}}
  \right)
                             \Bigr|
    {\mathscr{D}}_{ { \left( i -1 \right)  }^{+}}^{\rm{con}},
    x_{i} \in \Delta
  \right]
\Bigr\}
\\
&
 \:
 +
 \sum_{\text{cells} ~\Delta }
 {\mathbb{P}} \left[  x_{i} \in \Delta
                     \left|
      {\mathscr{D}}_{ {  \left(  i -1 \right) }^{-} }^{\rm{con}}, u_{i-1}\right.
            \right] \cdot
\Bigl\{
  \lambda \
   {\mathbb{E}}
   \left[
          { \left( x_{i} - {\widehat{x}}_{\left.i\right| i}\right)}^2
        \left| {\mathscr{D}}_{ {  \left(  i -1 \right)}^{+} }^{\rm{con}}
                    , x_{i} \in \Delta
        \right.
   \right]
\Bigr\},
\end{align*}
where $\lambda = - \left( \widehat{\nu} + \widetilde{\nu} \right)$.
The cost $\Gamma_{i}$ does depend on both $\xi_{i}\left( \cdot
\right) $ and $u_i$, but for given data~$
 {\mathscr{D}}_{ { \left( i -1 \right) }^{-} }^{\rm{con}}
$
and control~$u_{i-1}$, the minimum of $\Gamma_{i}$ over all admissible
quantizers $\xi_{i} \left( \cdot \right) $ may possibly depend on
$
  {\mathscr{D}}_{ {  \left(  i -1 \right) }^{-} }^{\rm{con}}
$
but not on the control~$u_{i-1}$. To see this consider two arbitrary
possible values~$u, \widetilde{u}$ for $u_{i-1}$. Suppose that one is
given the quantizer
\begin{align*}
 \xi \left( x \right)
&
 =
 \begin{cases}
   1   & \text{if} \ x \in \left( -\infty , \delta_1 \right) , \\
   2   & \text{if} \ x \in \left( \delta_1 , \delta_2 \right), \\
   \vdots & \vdots \\
   N  & \text{if} \ x \in \left( \delta_{N-1}  , + \infty \right),
 \end{cases}
\end{align*}
meant for quantizing a random variable with the
density~$
   \rho_{ \left.  i \right| i -1 }
    \left(  x \left|
      {\mathscr{D}}_{ {  \left(  i -1 \right) }^{-} }^{\rm{con}} , u_{i-1} = u
              \right.
   \right)
   $.
Consider the quantizer~$\widetilde{\xi}$ constructed by taking each
cell~$\Delta = \left( \underline{\delta},  \overline{\delta} \right)$
in~${\xi}$, and generating a new cell~$\widetilde{\Delta} = \left(
\underline{\delta}  - u + \widetilde{u} , \, \overline{\delta} - u
+ \widetilde{u} \right)$, and stipulating that the new
quantizer~$\widetilde{\xi}$ assigns to the cell~$\widetilde{\Delta}$
the same channel input that the quantizer~${\xi}$ assigns to~$\Delta$.

Because of the linear evolution:
$
    x_i  =  a x_{i-1}  + u_{i-1}  +  w_{i-1},
$
and because the random variable $w_{i-1}$ is independent of the data
${\mathscr{D}}_{ {\left( i - 1 \right) }^{+} }^{\rm{con}}$, we have the
convolution relations:
\begin{align*}
   \rho \left( x \right)
&
  =
  {
    \left.
       \rho_{ \left.  i \right| i - 1 }
           \left(  {\frac{ \cdot - u } { a } }  \right)
       \circledast
       \rho_{ w }  \left(  \cdot \right)
    \right|
  }_{x} \ \text{and,} \\
     \widetilde{\rho} \left( x \right)
&
 =
 {
 \left.
  \rho_{ \left.  i \right| i - 1 }
     \left(  {\frac{ \cdot - \widetilde{u} } { a } }  \right)
       \circledast
  \rho_{ w }  \left(  \cdot \right)
 \right|
 }_{x},
\end{align*}
leading to the following symmetry w.r.t.~translations:
\begin{align}
\label{translationSymmetryAtAnyTime}
   \rho_{ \left.  i \right| i -1 }
    \left(  x - u  \left|
 {\mathscr{D}}_{ { \left(  i -1 \right) }^{-} }^{\rm{con}} , u_{i-1} = u
              \right.
   \right)
&
   =
   \rho_{ \left.  i \right| i -1 }
    \left(  x - \widetilde{u} \left|
     {\mathscr{D}}_{ { \left(  i -1 \right) }^{-} }^{\rm{con}}
             , u_{i-1} = \widetilde{u}
              \right.
   \right) .
\end{align}
Then we get the following equalities for each pair of cells~$\Delta ,
\widetilde{\Delta}$
\begin{align*}
 {\mathbb{P}} \left[  x_{i} \in \Delta
                     \left|
      {\mathscr{D}}_{ { \left(  i -1 \right) }^{-} }^{\rm{con}} ,
        u_{i-1} = u \right.
            \right]
&
 =
 {\mathbb{P}} \left[ x_{i} \in \widetilde{\Delta}
                     \left|
          {\mathscr{D}}_{ { \left(  i -1 \right) }^{-} }^{\rm{con}}
                                , u_{i-1} = \widetilde{u} \right.
            \right] ,
\\
 \Gamma_{i+1}
  \Bigl(
    {\mathscr{D}}_{ { \left(  i -1 \right) }^{-} }^{\rm{con}} , u_{i-1} = u,
    x_{i} \in \Delta
  \Bigr)
&
 =
 \Gamma_{i+1}
  \Bigl(
    {\mathscr{D}}_{ { \left(  i -1 \right) }^{-} }^{\rm{con}} ,
    u_{i-1} = \widetilde{u},
    x_{i} \in \widetilde{\Delta}
  \Bigr) , \ \text{and,}
\\
   {\mathbb{E}}
   \left[
          { \left( x_{i} - {\widehat{x}}_{\left.i\right| i}\right)}^2
        \left|
    {\mathscr{D}}_{ { \left(  i -1 \right) }^{-} }^{\rm{con}} , u_{i-1} = {u},
    x_{i} \in {\Delta}
        \right.
   \right]
&
  =
   {\mathbb{E}}
   \left[
          { \left( x_{i} - {\widehat{x}}_{\left.i\right| i}\right)}^2
        \left|
    {\mathscr{D}}_{ { \left(  i -1 \right)}^{-} }^{\rm{con}} ,
           u_{i-1} = \widetilde{u},
     x_{i} \in \widetilde{\Delta}
        \right.
   \right].
\end{align*}
Then the performance of any quantizer ${\xi}$ designed for $u_{i-1} =
u$ can be matched by $\widetilde{\xi}$ for $u_{i-1}=\widetilde{u}$,
and vice versa. Hence, we can conclude that for any~$u, \widetilde{u}$,
\begin{align*}
 \inf_{ \xi } \,
 \Gamma_{i}
  \Bigl( \xi \left(  \cdot \right)  ;
    {\mathscr{D}}_{ { \left(  i -1 \right) }^{-} }^{\rm{con}} , u_{i-1} = u,
  \Bigr)
&
 =
\inf_{\xi} \,
 \Gamma_{i}
  \Bigl(  \xi \left(  \cdot \right)  ;
    {\mathscr{D}}_{ { \left(  i -1 \right) }^{-} }^{\rm{con}} ,
        u_{i-1} = \widetilde{u},
  \Bigr).
\end{align*}
Notice that this optimal encoder now become controls-forgetting from
time~$i-1$.
\end{proof}
As the optimal control $u_T^*$ is a linear function on $\widehat{x}_{T|T}$,
the encoder~$\xi_T$ begets a performance cost that is quadratic in $x_T,
\widehat{x}_{T|T}$. Then the above Lemma renders the optimal encoding map
$\xi_T^*$ to be controls-forgetting from time $T-1$.
This reduction also holds at earlier times.
\begin{lemma} [Encoder separation for affine controls]
If the two conditions hold:
(A)~{for any admissible control strategy, an admissible encoder strategy
minimizing the performance cost~(\ref{objectiveFunction}) exists,
and }
(B)~{we apply as control strategy one affine from time~$\tau$:
$\mathcal{K}_i^{{\rm{mult}}, \tau} \Bigl(
\mathscr{D}_{i^+}^{\rm{con}}
\Bigr)
$} (from defn.~\ref{definition:multiplexedAffineControls}),
then the following two conclusions hold:
(a)~{an encoder that is controls-forgetting from time~$\tau$
minimizes the partial LQ cost:
\begin{gather*}
     {\mathbb{E}}
    \left[  \left.
       x_{T+1}^2 + p \sum_{i= \tau + 1}^T x_i^2 + q \sum_{i= \tau}^T u_i^2
            \right|  \mathscr{D}_{\tau^+}^{\rm{con}}
    \right],
\end{gather*}
and,
}
(b)~{the shapes of the minimizing encoding maps from time~$\tau$
and their performance are independent of the data:
$
    \left\{
           \, u_{\tau - 1}^{\dagger} ,\,
          \{k_i\}_{i= \tau}^T , \, \{d_i \}_{i= \tau}^T
   \right\}.
$
}
\label{theorem:controlsForgettingEncoderIsOptimal}
\end{lemma}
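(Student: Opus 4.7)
The plan is a backward induction on $i$ from $T$ down to $\tau$, structured so that at each step we verify the hypotheses of Lemma~\ref{lemmaOnQuantizerDistortion} and immediately invoke its conclusion. The induction hypothesis at stage $i$ has two parts: (IH1) under the affine control strategy $\mathcal{K}^{\mathrm{mult},\tau}$, the conditional cost-to-go from time $i$ onward admits the separated form
\[
\mathbb{E}\left[x_{T+1}^2 + p\sum_{j=i}^{T}x_j^2 + q\sum_{j=i}^{T}u_j^2 \,\Big|\, \mathscr{D}_{i^+}^{\mathrm{con}}\right]
= \mathbb{E}\left[J_i^{\mathrm{con}}(u_i,x_i)\,\big|\,\mathscr{D}_{i^+}^{\mathrm{con}}\right] + \mathbb{E}\left[\Gamma_{i+1}\,\big|\,\mathscr{D}_{i^+}^{\mathrm{con}}\right],
\]
with $J_i^{\mathrm{con}}$ quadratic in $(x_i,\widehat{x}_{i|i})$ and $\Gamma_{i+1}$ a weighted sum of future squared estimation errors $\{(x_j-\widehat{x}_{j|j})^2\}_{j=i+1}^T$; and (IH2) an optimal choice of $\{\xi_j\}_{j=i+1}^T$ is controls-forgetting from time $i$, with shapes independent of $u_{i-1}^{\dagger}$ and of $\{k_j,d_j\}_{j=\tau}^T$.

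For the base case $i=T$, substitute $u_T=k_T\widehat{x}_{T|T}+d_T$ into $\mathbb{E}[x_{T+1}^2+qu_T^2\mid\mathscr{D}_{T^+}^{\mathrm{con}}]$ and expand using $x_{T+1}=ax_T+u_T+w_T$ and the independence of $w_T$; one obtains a quadratic in $(x_T,\widehat{x}_{T|T})$ with coefficients depending only on $a,p,q,k_T,d_T$, while $\Gamma_{T+1}\equiv 0$. Lemma~\ref{lemmaOnQuantizerDistortion} then supplies an optimal $\xi_T$ controls-forgetting from time $T-1$, establishing (IH2) at $i=T$. For the inductive step, assume (IH1) and (IH2) at stage $i+1$, and compute the time-$i$ cost-to-go by adding the stage-$i$ quadratic contribution $\mathbb{E}[p\,x_i^2+qu_i^2\mid\mathscr{D}_{i^+}^{\mathrm{con}}]$ and taking a further conditional expectation. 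Substituting $u_i=k_i\widehat{x}_{i|i}+d_i$ and using $x_{i+1}=ax_i+u_i+w_i$ propagates the quadratic form from $(x_{i+1},\widehat{x}_{i+1|i+1})$ to one in $(x_i,\widehat{x}_{i|i})$; by the smoothing property, the cross terms linear in $w_i$ vanish. The estimation-error piece $\Gamma_{i+2}$ absorbs a new term proportional to $(x_{i+1}-\widehat{x}_{i+1|i+1})^2$ to become $\Gamma_{i+1}$, which, by (IH2), depends only on future estimation errors and not on controls. With (IH1) now verified at stage $i$, Lemma~\ref{lemmaOnQuantizerDistortion} yields an optimal $\xi_i$ controls-forgetting from time $i-1$ whose performance is independent of $u_{i-1}$.

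Running the induction down to $i=\tau$ produces an optimal $\{\xi_j\}_{j=\tau+1}^T$ that is controls-forgetting from time $\tau$, proving (a). For (b), note that at every step Lemma~\ref{lemmaOnQuantizerDistortion} explicitly delivers independence of the minimizing encoder's shape from the control $u_{i-1}$; chaining these assertions from $i=T$ back to $i=\tau$ shows independence from $\{k_j,d_j\}_{j=\tau}^T$ and from $u_{\tau-1}^{\dagger}$, since these data enter only through the quadratic coefficients of $J_j^{\mathrm{con}}$ and not through the shape of the conditional density $\rho_{j|j-1}^{\zeta}(\cdot)$ that governs the optimal quantizer.

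The main obstacle is the algebraic bookkeeping in the inductive step, specifically verifying that $\mathbb{E}[J_{i+1}^{\mathrm{con}}\mid\mathscr{D}_{i^+}^{\mathrm{con}}]$ decomposes cleanly into a quadratic part in $(x_i,\widehat{x}_{i|i})$ plus a pure estimation-error term. The add-and-subtract manipulation around $\widehat{x}_{i+1|i+1}^2$ employed inside the proof of Lemma~\ref{lemmaOnQuantizerDistortion} (cf.~\eqref{writingCostUsingEstimationError}) is invoked at every stage to effect this separation, and the essential input enabling it is the translation symmetry~\eqref{translationSymmetryAtAnyTime}, which via the controls-forgetting property of future encoders guarantees that the conditional distribution of $x_{i+1}-\widehat{x}_{i+1|i+1}$ given $\mathscr{D}_{i^+}^{\mathrm{con}}$ is insensitive to $u_i$.
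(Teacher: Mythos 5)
Your proposal is correct and follows essentially the same route as the paper: a backward induction whose hypothesis couples the separated quadratic-plus-distortion form of the cost-to-go with the controls-forgetting property of the future encoders, verifying at each stage the hypotheses of Lemma~\ref{lemmaOnQuantizerDistortion} and invoking its conclusion to push the controls-forgetting property one step earlier. The only difference is presentational: the paper writes out the explicit coefficient recursions for the quadratic value function, whereas you argue the propagation of the quadratic form qualitatively, which suffices for the structural claim.
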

\begin{proof}
We prove by mathematical induction. For a given control strategy, define:
\begin{align*}
   W_{T}
&
   =
     {\mathbb{E}}
    \left[  \left.
       x_{T+1}^2 + p x_T^2  + q  u_T^2
            \right|  \mathscr{D}_{{\left( T - 1 \right)}^+}^{\rm{con}}
    \right],
&
   W_{T}^*
&
   =
   \inf_{\xi_T\left( \cdot \right)}
   W_{T} ,
\\
   W_i
&
   =
     {\mathbb{E}}
    \left[  \left.
       p x_i^2  + q  u_i^2
            \right|  \mathscr{D}_{{\left( i - 1 \right)}^+}^{\rm{con}}
    \right]
    +
     {\mathbb{E}}
    \left[
               W_{i+1}^* \left(  \mathscr{D}_{{ i }^+}^{\rm{con}}
                         \right)
            \left| \mathscr{D}_{{\left( i -1 \right)}^+}^{\rm{con}} ,
                       \xi_i \left( \cdot \right)
           \right.
    \right] ,
&
   W_{i}^*
&
   =
   \inf_{\xi_{i}\left( \cdot \right)}
   W_{i}.
\end{align*}
\begin{hypothesis}
 For some time~$t=i$ such that~$\tau \le i < T$, we have
the following three assumptions: (1)~for every $j\ge i+1$, the optimal
value function~$
   W_{j}^* \left(  {\mathscr{D}}_{ {\left( j -1 \right)}^{-} }^{\rm{con}}
           \right)
$
takes the form:
\begin{gather*}
 \alpha_j \sigma_w^2 + \overline{\alpha}_j
 +
 \widetilde{\beta}_j
   {\mathbb{E}}
   \left[
          {  x_{j} }^2
        \left| {\mathscr{D}}_{ {j }^{-} }^{\rm{con}} \right.
   \right]
 +
 \overline{\beta}_j \,
           \widehat{x}_{j \left| j \right.}
 +
  {\mathbb{E}}\left[  \left.
       \widetilde{\Gamma}_{j+1}^*
     \left(
         {\mathscr{D}}_{ { \left( j+ 1\right) }^{-} }^{\rm{con}}
     \right)
                      \right|
         {\mathscr{D}}_{ { j }^{-} }^{\rm{con}}
             \right]
 +
   {\widetilde{\lambda}}_j
   {\mathbb{E}}
   \left[
          { \left( x_{j} - {\widehat{x}}_{\left. j \right| j}\right)  }^2
        \left| {\mathscr{D}}_{ {j}^{-} }^{\rm{con}} \right.
   \right]
 ,
\end{gather*}
 where the $\alpha_j , \overline{\alpha}_j
 , \widetilde{\beta}_j ,
 \overline{\beta}_j , \widetilde{\lambda}_j $
are known non-negative real numbers for~$j\ge i+1$, (2)~for each such
$j$, the non-negative function $
 \widetilde{\Gamma}_{j+1}^*
 \left(
   {\mathscr{D}}_{ { j }^{-} }^{\rm{con}}
   \right)
$
is assumed to be independent of the partial waveform
 $\left\{ u_{j},  u_{j+1}, \ldots, u_{T} \right\}$, and (3)~the optimal
partial set of encoding maps~$
    {  \left\{
                \xi^*_{j} \left(  \cdot \right)
       \right\}
    }_{i+1}^{T}
$ is a set that is controls-forgetting from time~$i$.
\end{hypothesis}
\noindent
We will now show: if this hypothesis holds for time~$i$, then it holds
for time~$i-1$. Assuming that the partial set of optimal encoding maps~$
    {  \left\{
                \xi^*_{j} \left(  \cdot \right)
       \right\}
    }_{i+1}^{T}
$ are employed, we get:
\begin{align*}
  W_{i}
&
  =
     {\mathbb{E}}
    \left[  \left.
       p \, x_i^2  + q \, u_i^2
            \right|  \mathscr{D}_{{\left( i - 1 \right)}^+}^{\rm{con}}
    \right]
   +
     {\mathbb{E}}
    \left[
               W_{i+1}^* \left(  \mathscr{D}_{{ i }^+}^{\rm{con}}
                         \right)
            \left| \mathscr{D}_{{\left( i -1 \right)}^+}^{\rm{con}} ,
                       \xi_i \left( \cdot \right)
           \right.
    \right] ,
\\
&
   =
     p \, {\mathbb{E}}
    \left[
       x_i^2
            \left|  \mathscr{D}_{{\left( i - 1 \right)}^+}^{\rm{con}} \right.
    \right]
   +
     q \,  {\mathbb{E}}
    \left[
       u_i^2
            \left|  \mathscr{D}_{{\left( i - 1 \right)}^+}^{\rm{con}} \right.
    \right]
   +
  \alpha_{i+1} \sigma_w^2 + \overline{\alpha}_{i+1}
  +
  \widetilde{\beta}_{i+1}
   {\mathbb{E}}
   \left[
          {  x_{i+1} }^2
        \left| {\mathscr{D}}_{ {\left(i+1\right)}^{-} }^{\rm{con}} \right.
   \right]
\\
&
  \ \ \
 +
 \overline{\beta}_{i+1}
   {\mathbb{E}}
   \left[
           x_{i+1}
        \left| {\mathscr{D}}_{ {\left(i+1\right) }^{-} }^{\rm{con}} \right.
   \right]
 +
  {\mathbb{E}}\left[  \left.
       \widetilde{\Gamma}_{i+1}^*
     \left(
         {\mathscr{D}}_{ { \left( i+ 1\right) }^{-} }^{\rm{con}}
     \right)
                      \right|
         {\mathscr{D}}_{ { i }^{-} }^{\rm{con}}
             \right] ,
\\
&
  =
  \alpha_{i} \sigma_w^2 + \overline{\alpha}_{i}
  +
  \widetilde{\beta}_{i}
   {\mathbb{E}}
   \left[
          {  x_{i}^2 }
        \left| {\mathscr{D}}_{ {i}^{-} }^{\rm{con}} \right.
   \right]
 +
 \overline{\beta}_{i}
   {\mathbb{E}}
   \left[
           x_{i}
        \left| {\mathscr{D}}_{ {i }^{-} }^{\rm{con}} \right.
   \right]
\\
&
  \ \ \
  +
  {\mathbb{E}}\left[  \left.
       \widetilde{\Gamma}_{i+1}^*
     \left(
         {\mathscr{D}}_{ { \left( i+ 1\right) }^{-} }^{\rm{con}}
     \right)
                      \right|
         {\mathscr{D}}_{ { i }^{-} }^{\rm{con}}
             \right]
 +
   {\widetilde{\lambda}}_i
   {\mathbb{E}}
   \left[
          { \left( x_{i} - {\widehat{x}}_{\left. i \right| i}\right)  }^2
        \left| {\mathscr{D}}_{ {i}^{-} }^{\rm{con}} \right.
   \right]
 ,
\end{align*}
where, the coefficients:
\begin{align*}
   \alpha_i
&
   =
   \alpha_{i+1} + \widetilde{\beta}_{i+1},
&
   \overline{\alpha}_i
&
   =
   \overline{\alpha}_{i+1} + \widetilde{\beta}_{i+1} d_i^2
      + q \, d_i^2 + \overline{\beta}_{i+1} d_i,
\\
   \overline{\beta}_{i}
&
  =
   2 \left( q \, k_i d_i + a\widetilde{\beta}_{i+1} d_i
             + \widetilde{\beta}_{i+1} k_i d_i
     \right) ,
&
   \widetilde{\beta}_{i}
&
   =
   p_i + a^2 \widetilde{\beta}_{i+1} + k^2_i \widetilde{\beta}_{i+1}
      + 2ak_i \widetilde{\beta}_{i+1} + q \, k_i^2 \widetilde{\beta}_{i+1},
\\
   \widetilde{\lambda}_{i}
&
   =
   q \, k_i^2  + k_i^2 \widetilde{\beta}_{i+1}
      + 2ak_i \widetilde{\beta}_{i+1} .
&
&
\end{align*}
We have thus:
$
   W_i
   =
   {\mathbb{E}} \left[
                   {\text{A quadratic in}} \ x_i,
                       \widehat{x}_{i\left| i\right.}
                \right]
   +
   {\mathbb{E}} \left[
                   {\text{ Future distortions }}
                \right].
$
This and the fact that the encoder is controls-forgetting from time~$t=i$ meet the requirements of Lemma~\ref{lemmaOnQuantizerDistortion}.
Then we get the optimal encoding map~$\xi^*_i$ to be controls-forgetting
from time~$t=i-1$, and
\begin{align*}
    \widetilde{\Gamma}_i
&
    =
    \min_{\xi}
  {\mathbb{E}}\left[  \left.
       \widetilde{\Gamma}_{i+1}^*
     \left(
         {\mathscr{D}}_{ { \left( i+ 1\right) }^{-} }^{\rm{con}}
     \right)
                      \right|
         {\mathscr{D}}_{ { i }^{-} }^{\rm{con}}
             \right]
 +
   {\widetilde{\lambda}}_i \,
   {\mathbb{E}}
   \left[
          { \left( x_{i} - {\widehat{x}}_{\left. i \right| i}\right)  }^2
        \left| {\mathscr{D}}_{ {i}^{-} }^{\rm{con}} \right.
   \right]
\end{align*}
is independent of the partial set of controls
$
   { \left\{
       u_j
     \right\}
   }_{j=i-1}^{T}
$. From this it follows that the induction hypothesis is
also true for time $i-1$.
\end{proof}
\begin{lemma}[Certainty equivalence controls for controls-forgetting encoders]
Fix the switch time~$\tau$. If the encoder is preassigned to be one that
is controls-forgetting from time~$\tau$,
then the partial LQ cost
\begin{gather*}
     {\mathbb{E}}
    \left[  \left.
       x_{T+1}^2 + p \sum_{i= \tau + 1}^T x_i^2 + q \sum_{i= \tau}^T u_i^2
            \right|  \mathscr{D}_{\tau^-}^{\rm{con}}
    \right],
\end{gather*}
is minimized by the following control laws with a linear form:
For~$i\ge\tau$:
$
   u_i^*
   =
   k^*_i \,  {\widehat{x}}_{\left.i\right| i}
$.
\label{theorem:CElawsAreOptimal}
\end{lemma}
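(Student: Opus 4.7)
The plan is to proceed by backward dynamic programming on $j$ running from $T$ down to $\tau$, carrying the induction hypothesis that the optimal cost-to-go at time~$j$ takes the form
\[
V_j^*\bigl(\mathscr{D}_{j^{-}}^{\rm{con}}\bigr) \;=\; c_j \;+\; \widetilde{\beta}_j\,\widehat{x}_{j|j}^{2} \;+\; \widetilde{\lambda}_j\,\mathbb{E}\bigl[(x_j-\widehat{x}_{j|j})^{2} \bigm| \mathscr{D}_{j^{-}}^{\rm{con}}\bigr] \;+\; \Xi_j,
\]
where $\widetilde{\beta}_j,\widetilde{\lambda}_j\ge 0$, $c_j$ is a constant, and $\Xi_j$ is the conditional expectation given $\mathscr{D}_{j^{-}}^{\rm{con}}$ of a weighted sum of the future estimation-error distortions $\{\mathbb{E}[(x_l-\widehat{x}_{l|l})^{2}\mid\cdot]\}_{l>j}$. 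Because the encoders are controls-forgetting from time~$\tau$, the preceding lemma on distortions incurred by controls-forgetting encoders ensures that $\Xi_j$ does not depend on the partial control sequence $\{u_\tau,\ldots,u_T\}$. Under this hypothesis, I will show that the minimizer is $u_j^* = k_j^{*}\widehat{x}_{j|j}$ and that the same structural form propagates to time~$j-1$.

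The base case $j=T$ is Lemma~\ref{lemma:optimalUT}: it already delivers $u_T^* = -\tfrac{a}{1+q}\widehat{x}_{T|T}$, a cost-to-go of exactly the required shape with $\widetilde{\beta}_T = \tfrac{a^2 q}{1+q}$ and $\widetilde{\lambda}_T = \tfrac{a^2}{1+q}$, and an error-variance term that by controls-forgetting is free of $\{u_\tau,\ldots,u_{T-1}\}$. For the inductive step, assume the hypothesis at time~$i+1$ and write the Bellman recursion $V_i(u_i) = \mathbb{E}[\,p\,x_i^{2} + q\,u_i^{2} + V_{i+1}^{*}\mid \mathscr{D}_{i^{-}}^{\rm{con}}, u_i\,]$. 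Using the plant equation together with the filter-update relation $\widehat{x}_{i+1|i+1} = a\widehat{x}_{i|i} + u_i + \bar{w}_i$, where $\bar w_i$ is the filter innovation, the controls-forgetting property forces the conditional law of $\bar w_i$ given $\mathscr{D}_{i^{-}}^{\rm{con}}$ to be independent of $u_i$. Expanding the quadratic $\widetilde{\beta}_{i+1}\widehat{x}_{i+1|i+1}^{2}$ and invoking the induction hypothesis to see that $\Xi_{i+1}$ and $\mathbb{E}[(x_{i+1}-\widehat{x}_{i+1|i+1})^{2}\mid\cdot]$ are both $u_i$-free, collapses $V_i(u_i)$ into a strictly convex quadratic in $u_i$ with leading coefficient $q+\widetilde{\beta}_{i+1}$ and cross coefficient $2a\widetilde{\beta}_{i+1}\widehat{x}_{i|i}$. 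Minimization yields $u_i^{*} = -\tfrac{a\widetilde{\beta}_{i+1}}{q+\widetilde{\beta}_{i+1}}\widehat{x}_{i|i}$, so that $k_i^{*}$ matches the certainty-equivalence gain of Lemma~\ref{lemma:affineCElaws}. Substituting $u_i^{*}$ back and using $\mathbb{E}[x_i^{2}\mid\mathscr{D}_{i^{-}}^{\rm{con}}] = \widehat{x}_{i|i}^{2} + \mathbb{E}[(x_i-\widehat{x}_{i|i})^{2}\mid\mathscr{D}_{i^{-}}^{\rm{con}}]$ reproduces the hypothesized form at time~$i$, with $\widetilde{\beta}_i$ and $\widetilde{\lambda}_i$ obeying the Riccati recursion of Lemma~\ref{lemma:affineCElaws}, closing the induction.

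The main obstacle is the bookkeeping at the inductive step. Every term inherited from $V_{i+1}^{*}$ involving $x_{i+1}$ or $\widehat{x}_{i+1|i+1}$ must separate cleanly into a convex quadratic in $u_i$ plus remainders that are provably independent of $u_i$; while the first piece is a direct consequence of the filter-update relation, the second relies delicately on the controls-forgetting property and must not be invoked glibly. One must explicitly verify that, for every $j\ge i+1$, the conditional laws $\mathbb{P}[x_j\in\cdot\mid\mathscr{D}_{j^{-}}^{\rm{con}}]$ are unaffected by $\{u_\tau,\ldots,u_T\}$, and that the inherited $\widetilde{\beta}_{i+1}$ is non-negative so the $u_i$-quadratic has a unique minimizer. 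These two verifications together turn the backward pass from a suggestive computation into a rigorous certainty-equivalence statement.
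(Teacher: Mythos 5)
Your proof is correct and follows essentially the same route as the paper's: a backward dynamic-programming pass in which the controls-forgetting property renders all current and future estimation-error distortions independent of $\{u_\tau,\ldots,u_T\}$, so the $u_i$-dependent part of the cost-to-go collapses to a strictly convex quadratic minimized by $u_i^*=-\tfrac{a\beta_{i+1}}{q+\beta_{i+1}}\widehat{x}_{i|i}$ with the Riccati gains of Lemma~\ref{lemma:affineCElaws} (the paper writes out only the $T-1$ step and says ``repeat backwards,'' whereas you formalize the induction and use the update $\widehat{x}_{i+1|i+1}=a\widehat{x}_{i|i}+u_i+\bar w_i$ in place of expanding $\mathbb{E}[x_{i+1}^2\mid\cdot]$ --- an equivalent bookkeeping). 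The only blemish is a stage-cost indexing slip: with your recursion $V_i=\mathbb{E}[p\,x_i^2+q\,u_i^2+V_{i+1}^*\mid\cdot]$ the base case must fold $p\,x_T^2$ into $V_T^*$, so $\widetilde\beta_T$ should be $p+\tfrac{a^2q}{1+q}=\beta_T$ rather than $\tfrac{a^2q}{1+q}$; otherwise the term $p\,x_T^2$ is dropped and the gain at $T-1$ would not match the paper's.
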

\begin{proof}
Define the following cost-to-go at time~$t=T-1$:
$
   V_{T-1}
   =
   {\mathbb{E}}
      \left[
               W_{T} \left(  \epsilon_T \left( \cdot \right) ;
                   \mathscr{D}_{ {\left( T - 1 \right)}^+}^{\rm{con}}
                      \right)
      \right].
$
Because of Lemma~\ref{lemma:optimalUT},
\begin{align*}
   V_{T-1}
&
   =
   \sigma_w^2
   +
   \left(  p + {\frac{a^2 q}{q+1}} \right)
   {\mathbb{E}}
   \left[
          x_{T}^2
      \left| {\mathscr{D}}_{{\left( T-1\right)}^{-}}^{\rm{con}}, u_{T-1}\right.
   \right]
   +
   {\mathbb{E}}\left[  \left.
          {\left(  x_T - \widehat{x}_{ T \left|T \right. }  \right)}^2
                      \right|
         {\mathscr{D}}_{ { T }^{-} }^{\rm{con}}
             \right].
\end{align*}
Because the encoder is controls-forgetting from time~$\tau$, the last
term, which is the distortion due to the encoder~$\xi_T$, is independent
of the partial set of controls
$
   { \left\{
       u_i
     \right\}
   }_{i= \tau + 1}^{T}
$.
Hence the only part of~$V_{T-1}$ that depends on the control~$u_{T-1}$
is the quadratic
\begin{align*}
&
   q \, u^2 +
   \left(  p + {\frac{a^2 q}{q+1}} \right)
   {\mathbb{E}}
   \left[
          x_{T}^2
      \left| {\mathscr{D}}_{{\left( T-1\right)}^{-}}^{\rm{con}}, u_{T-1}\right.
   \right]
\\
&
   =
   q \, u^2 +
   \left(  p + {\frac{a^2 q}{q+1}} \right)
   \left\{
       a^2
       {\mathbb{E}}
       \left[
              x_{T}^2
          \left| {\mathscr{D}}_{{\left( T-1\right)}^{-}}^{\rm{con}}, u_{T-1}\right.
       \right]
       +
       2 a \, \widehat{x}_{ T-1 \left|T -1\right. } \, u_{T-1}
       +
       u_{T-1}^2
       +
       \sigma_w^2.
   \right\}
\end{align*}
Hence the best control law is:
$
    u_{T-1}^*
=
    - {\frac{a \left( p  +  {\frac{a^2 q}{q+1}} \right) }
             { q +  p  +  {\frac{a^2 q}{q+1}} }
      }
     \widehat{x}_{ T-1 \left|T -1\right. }
$,
and the resulting value function:
\begin{align*}
      V_{T-1}^*
&
      =
      \left( 1 + p  +  {\frac{a^2 q}{q+1}} \right)  \sigma_w^2
      +
     {\tfrac{a^2 q \left( p  +  {\frac{a^2 q}{q+1}} \right) }
             { q +  p  +  {\frac{a^2 q}{q+1}} }
      }
       {\mathbb{E}}
       \left[
              x_{T}^2
          \left| {\mathscr{D}}_{{\left( T-1\right)}^{-}}^{\rm{con}}, u_{T-1}\right.
       \right]
\\
&
   \  \ \
      +
      {\frac{ a^2 \left( p  +  {\frac{a^2 q}{q+1}} \right)}
            { q + p  +  {\frac{a^2 q}{q+1}}  }
      }
   {\mathbb{E}}\left[  \left.
          {\left(  x_{T-1} - \widehat{x}_{ T-1 \left|T-1 \right. }  \right)}^2
                      \right|
         {\mathscr{D}}_{ { \left( T - 2 \right) }^{+} }^{\rm{con}}
             \right]
       +
   {\mathbb{E}}\left[  \left.
          {\left(  x_T - \widehat{x}_{ T \left|T \right. }  \right)}^2
                      \right|
         {\mathscr{D}}_{ { T }^{-} }^{\rm{con}}
             \right].
\end{align*}
Repeating this procedure backwards in time, we get for times~$i \ge
\tau$, the optimal control laws are:
$
    u_{i}^*
=
    - k^*_i
     \widehat{x}_{ i \left| i\right. }
$,
where
$
    k^*_i = a \frac{\beta_{i+1}}{q + \beta_{i+1}}, \,
    \beta_i = p +   {\frac{a^2 q \beta_{i+1}}{ q + \beta_{i+1} }} ,
$
 and,
$
    \beta_{T+1} = 1.
$
\end{proof}

\subsection{Main theorem}
Lemma~\ref{theorem:controlsForgettingEncoderIsOptimal} implies that for a
pre-assigned controller affine from time zero, there exist optimal
encoding maps that are
controls-forgetting from time zero. Lemma~\ref{theorem:CElawsAreOptimal} is
complementary. It implies that for a pre-assigned encoder that is controls
forgetting from time zero, the optimal control laws have linear forms.

For Design problem~\ref{fullyDynamicControls} an optimal pair of strategies have a similar
simplified structure. It is optimal to apply a combination of controls-forgetting
encoding and control laws linear in $\widehat{x}_{i|i}$. In general, this
controls-forgetting encoder does not minimize the aggregate squared estimation error.
The goal accomplished by an optimal encoder is slightly different. It is to
minimize a sum of state estimation errors with the time-varying weights~$\lambda_i$.
\begin{theorem}[Optimality of separation and certainty equivalence]
For Design problem~\ref{fullyDynamicControls}, with the discrete alphabet
channel of constant alphabet size, the quadratic performance
cost~(\ref{objectiveFunction}) is
minimized by applying the linear control laws
\begin{align}
\label{optimalControlLaw}
  u_t^*
&
  =
  -      k^*_t \,
         {\widehat{x}}_{\left.t\right| t}
\end{align}
in combination with the following encoder which is controls-forgetting from
time~$0$:
\begin{align}
\label{optimalEncoder}
    \epsilon_t^*
        \left( \, \zeta_t \,  ;
               \left\{ z_i \right\}_0^{t-1},
               \left\{ \epsilon_i  \left( \cdot \right) \right\}_0^{t-1}
        \right)
&
 =
 {\underset{ \epsilon \left( \cdot \right)
           }
           { \arg\inf
           }
 }
  \
 \Gamma_{i}
  \Bigl(  \epsilon \left( \cdot \right)
                \,  ;  \,
               \left\{ z_i \right\}_0^{t-1},
               \left\{ \epsilon_i  \left( \cdot \right)  \right\}_0^{t-1}
  \Bigr),
\end{align}
where,
$
    k^*_i = a \frac{\beta_{i+1}}{q + \beta_{i+1}},
    \beta_i = p +   {\frac{a^2 q \beta_{i+1}}{ q + \beta_{i+1} }} ,
    \beta_{T+1} = 1, \ \text{and} \
    \lambda_i = {\frac{a^2 \beta_{i+1}^2 }{ q + \beta_{i+1} }}
$ and where,
\begin{align*}
  \Gamma_t
&
  =
  \lambda_t \
  {\mathbb{E}} \left[
        { \left( \zeta_{t} - \widehat{\zeta}_{\left.t\right| t}
          \right)}^2
    \left|
                 \epsilon_t \left( \cdot\right) ,
                 {\mathscr{D}}_{ { \left( t-1\right)  }^{+} }^{\rm{con}}
   \right.
                            \right]
  +
  {\mathbb{E}} \left[
        \Gamma_{t+1}^* \left(
                          \overline{x}_0, \sigma_0^2,
               \left\{ z_i \right\}_0^{t},
               \left\{ \epsilon_i  \left( \cdot \right)  \right\}_0^{t}
                      \right)
              \right],
\\
  \Gamma_T
&
  =
  {\mathbb{E}} \left[
        { \left( \zeta_{T} - \widehat{\zeta}_{\left.T\right| T}
          \right)}^2
    \left|
                 \epsilon_T \left( \cdot\right) , \,
                          \overline{x}_0, \sigma_0^2,
               \left\{ z_i \right\}_0^{T-1},
               \left\{ \epsilon_i  \left( \cdot \right)  \right\}_0^{T-1}
   \right.
                            \right] ,
\\
 \Gamma_t^*
&
 =
 {\underset{ \epsilon \left( \cdot \right)
           }
           { \inf
           }
 }
  \
 \Gamma_t \left( \epsilon  \right) .
\end{align*}
Moreover, this control law is a certainty equivalence law.
\label{theorem:CElawsForCFencoders}
\end{theorem}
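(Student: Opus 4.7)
The plan is to combine Lemmas~\ref{theorem:controlsForgettingEncoderIsOptimal} and \ref{theorem:CElawsAreOptimal} through a single backward induction on time~$i$ that simultaneously carries two invariants: at step~$i$, from time~$i$ onward (a)~the optimal controls take the linear form $u_j^{*} = -k_j^{*}\,\widehat{x}_{j|j}$, with the gains $\{k_j^{*}, \beta_j, \lambda_j\}$ generated by the Riccati-type recursions stated in the theorem, and (b)~the optimal encoders are controls-forgetting from time $i-1$ and jointly minimize the weighted aggregate distortion $\sum_{j=i}^{T} \lambda_j\,\mathbb{E}\bigl[(\zeta_j - \widehat{\zeta}_{j|j})^2\bigr]$. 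Under these invariants, the cost-to-go at time~$i^{-}$ automatically takes the separated quadratic-plus-distortion form that drives both preliminary lemmas.

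The base case $i = T$ is immediate from Lemma~\ref{lemma:optimalUT}: the optimum $u_T^{*} = -(a/(1+q))\,\widehat{x}_{T|T}$ renders the cost-to-go at $T^{-}$ a quadratic in $x_T$ and $\widehat{x}_{T|T}$ plus a terminal distortion weighted by $\lambda_T = a^2/(1+q)$, and Lemma~\ref{lemmaOnQuantizerDistortion} then yields the controls-forgetting property of $\xi_T^{*}$. For the inductive step, assume the invariants at time $i+1$. Since the continuation controls are affine from time $i+1$ and the continuation encoders are controls-forgetting from time $i$ by hypothesis, the tail distortion $\mathbb{E}[\Gamma_{i+1} \mid \mathscr{D}_{i^{+}}^{\rm con}]$ is insensitive to $u_i$; hence the minimization over $u_i$ reduces to that of a scalar quadratic in $x_i$ and $\widehat{x}_{i|i}$, yielding $u_i^{*} = -k_i^{*}\,\widehat{x}_{i|i}$ and the updated $\lambda_i = a^2\beta_{i+1}^2/(q+\beta_{i+1})$. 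With the full schedule now affine from time $i$, a second invocation of Lemma~\ref{theorem:controlsForgettingEncoderIsOptimal} (with $\tau = i$) produces an optimal $\xi_i^{*}$ that is controls-forgetting from $i-1$ and attains the prescribed weighted distortion, closing the induction.

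The main obstacle is the apparent circularity between the two preliminary lemmas: Lemma~\ref{theorem:CElawsAreOptimal} presupposes an encoder that already forgets controls, whereas Lemma~\ref{theorem:controlsForgettingEncoderIsOptimal} presupposes controls that are already affine. Because of the dual effect established in Example~\ref{example:dualEffect}, one cannot reason locally at time~$i$ without committing to a structure for the future loop. The backward induction breaks the circularity: at each step the requisite future structure is supplied by the induction hypothesis, so the controls-forgetting property of $\xi_{i+1}^{*}, \ldots, \xi_T^{*}$ guarantees that $\Gamma_{i+1}$ decouples from $u_i$, permitting a clean scalar minimization in $u_i$ even though a dual effect is present in the closed loop.

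Finally, the certainty-equivalence characterization of $u_t^{*}$ in the sense of Definition~\ref{ceDefinition} is immediate: the deterministic LQ problem with acausal access to all primitive random variables yields $u_t^{\rm det}(\vec{\omega}) = -k_t^{*}\,x_t(\vec{\omega})$ from the standard backward recursion, and its conditional expectation given the controller's data equals $-k_t^{*}\,\widehat{x}_{t|t} = u_t^{*}$, matching the form in Lemma~\ref{lemma:affineCElaws} with $\mathbb{E}[w_t \mid \mathscr{D}_{t^{-}}^{\rm con}] = 0$.
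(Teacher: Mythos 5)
Your proof is correct and follows essentially the same route as the paper: seed the backward recursion with Lemma~\ref{lemma:optimalUT}, then alternate Lemma~\ref{theorem:controlsForgettingEncoderIsOptimal} (encoder step, given affine continuation controls) and Lemma~\ref{theorem:CElawsAreOptimal} (control step, given controls-forgetting continuation encoders), closing with Lemma~\ref{lemma:affineCElaws} for the certainty-equivalence characterization. The only difference is presentational: you spell out the joint backward induction and the resolution of the apparent circularity between the two preliminary lemmas, which the paper compresses into ``repeatedly apply in sequence.''
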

\begin{proof}
Starting with the result of Lemma~\ref{lemma:optimalUT} as a seed,
repeatedly apply in sequence
Lemmas~\ref{theorem:controlsForgettingEncoderIsOptimal},
\ref{theorem:CElawsAreOptimal}. This proves optimality of the above
combination.
Lemma~\ref{lemma:affineCElaws} implies that the controls laws of
\eqref{optimalControlLaw} are indeed certainty equivalence control
laws as per van der Water and Willems~\cite{vanDerWaterWillems1981}.
\end{proof}
The optimal controller splits into a least square estimator
computing~$
         {\widehat{x}}_{\left.t\right| t}
$
and a time-dependent gain. Computing~${\widehat{x}}_{\left.t\right| t}$
is intrinsically hard because
quantization is a nonlinear operation. If one ignores this computational
burden, then, at least formally, the optimal controller resembles that
for the classical LQG optimal control problem.

Note that in general the sequence of weights $ \left\{  \lambda_i
\right\}_0^T $ depends on the parameters of the performance cost
including the control penalty coefficient~$q$. In the two special
cases:
\begin{enumerate}
 \item{the coefficients $q=0$, $p=1$, or}
 \item{the quantity $p + a^2 q - q > 0$ and the following equality holds:
       \begin{align*}
             p + a^2 q -q
            + \sqrt{ { \left(  p + a^2 q -q \right)   }^2   +   4pq  }
       &
            = 2,
       \end{align*}
      }
\end{enumerate}
it turns out that the weights~$\beta_i \equiv 1 \, \forall i$, and hence
the weights $\lambda_i \equiv \frac{a^2}{q+1} \, \forall i$.
Thus in these special cases, optimal encoders `ignore' the
parameters of the performance cost and simply minimize the usual
aggregate squared error in state estimation.

\subsection{Extension to the multivariable case\label{multivariableCase}}

Theorem~\ref{theorem:CElawsForCFencoders} can be extended to situations where the state, control, and noise signals are vectors, as well as where the objective function~\eqref{objectiveFunction} includes cross terms involving the state and control. We can also extend to the case where the sensor has access only to partial and noisy observations of the state. To carry out these extensions, we need no more than the standard arguments of LQG control. Below, we
mention only the key steps corresponding to the lemmas of Section~\ref{preliminaryLemmasSection}.
Consider a partially observed, linear
multivariable plant:
\begin{align}
 x_{t+1} & =  A x_t + B u_t + {E} w_t, \ \
 y_t   \  = C x_t + {D} v_t,
\label{multivariablePlant}
\end{align}
where the state~$x_t\in{\mathbb{R}}^n$,
the control~$u_t\in{\mathbb{R}}^m$,
the output~$y_t\in{\mathbb{R}}^p$,
the process noise~$w_t\in{\mathbb{R}}^{l_1}$, and
the measurement noise~$v_t\in{\mathbb{R}}^{l_2}$.
Let the two noise sequences~$\left\{ w_t \right\}, \left\{ v_t \right\}$
be IID sequences that are mutually independent of each other.

For any matrix~$M$, let $M^{\boldmath{\prime}}$ denote its transpose.
For all times, let $ {\mathbb{E}} [ w_t w_t^{\prime} ] = \Sigma_w$
and $ {\mathbb{E}} [v_t v_t^{\prime} ] = \Sigma_v$.
Let the performance objective be defined as
\begin{align}
J_{\text{general}} & =
       {\mathbb{E}} \left[ x_{T+1}^{\prime} \, P_{T+1} \, x_{T+1} \right]
   + \sum_{i=0}^{T}
       {\mathbb{E}} \left[
                      \begin{pmatrix}
                           x_i^{\prime}  & u_i^{\prime}
                      \end{pmatrix}
                      \begin{bmatrix}
                          P   & R^{\prime} \\
                          R   & Q
                      \end{bmatrix}
                      \begin{pmatrix}
                          x_i  \\ u_i
                      \end{pmatrix}
                   \right],
\label{modifiedObjectiveFunction}
\end{align}
where $P$ and $\begin{bmatrix}
P   & R^{\prime} \\
R   & Q
\end{bmatrix} $
are symmetric and positive semi-definite, and $Q$ is symmetric and
positive definite. It is easy to see that an extension of
Lemma~\ref{lemma:optimalUT} holds for the multivariable case.
Precisely, the optimal control law at the terminal decision time~$T$ is
$
u^*_T = -  K_T \, {\widehat{x}}_{T | T }
$ where
$
K_T  =  - {( B^{\prime} P_{T+1} B + Q )}^{-1} ( R + B^{\prime} P_{T+1} A )
$
and the optimal cost-to-go
$
   V^*_T \left( {\mathscr{D}}_{ T^{-} }^{\rm{con}} \right)
   =
   \textrm{trace} ( {E} \Sigma_w {E}^{\prime} ) +
 {\mathbb{E}} \left[
{ (x_T - {\widehat{x}}_{T|T} ) }^{\prime} M_T (x_T - {\widehat{x}}_{T|T} )
        \left|  {\mathscr{D}}_{ T^{-} }^{\rm{con}}  \right.
             \right]
+
   {\mathbb{E}} \left[  x_{T }^{\prime}
                ( P  + A^{\prime} P_{T+1} A - M_T )
                      x_{T }
        \left|  {\mathscr{D}}_{ T^{-} }^{\rm{con}}  \right.
               \right],
$
where the matrix
 $ M_{T}  =  K_T^{\prime} ( B^{\prime} P_{T+1} B + Q )  K_T $.
Next, the following generalization of
Lemma~\ref{lemmaOnQuantizerDistortion} can be proved.
\begin{lemma}[Multivariable version of
Lemma~\ref{lemmaOnQuantizerDistortion}]
Assume the hypothesis of Lemma~\ref{lemmaOnQuantizerDistortion}
but for the multivariable, partially observed plant~\eqref{multivariablePlant}, the objective function~\eqref{modifiedObjectiveFunction}, and
the following new definition of $J^{\text{con}}_i$:
\begin{align*}
J^{\text{con}}_i & =
     \alpha  + \beta^{\prime} \,  x_i + {\widehat{\beta}}^{\prime}
     \, {\widehat{x}}_{i|i}
      +
                      \begin{pmatrix}
                           x_i^{\prime}  & {{\widehat{x}}_{i|i} }^{\prime}
                      \end{pmatrix}
                      \begin{bmatrix}
                         {\widehat{P}}   & {\widehat{R}}^{\prime} \\
                         {\widehat{R}}   & {\widehat{Q}}
                      \end{bmatrix}
                      \begin{pmatrix}
                          x_i  \\ {\widehat{x}}_{i|i}
                      \end{pmatrix}.
\end{align*}
Then, it is optimal to apply an encoding map at time~$t=i$
that does not depend on the data:
$\bigl( u_{i-1} , \, {\{\widetilde{\mathcal{K}}_{j}
\left( \cdot \right)\}}_{i}^{T} \,\bigr)$.
It also follows that the shapes of the  encoding maps
$
\bigl\{
       \xi_{j}^{CF, i} \left(  \cdot \right)
\bigr\}_{i+1}^{T}
$
and their performance do not depend on the control~$u_{i-1}$.
\end{lemma}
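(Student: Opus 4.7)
The plan is to replay the three-step argument of Lemma~\ref{lemmaOnQuantizerDistortion}, upgrading each scalar identity to its matrix analogue. First I would expand $\mathbb{E}[J^{\text{con}}_i(u_i,x_i)+\Gamma_{i+1}\mid\mathscr{D}_{i^+}^{\text{con}}]$ using the assumed quadratic form of~$J^{\text{con}}_i$, the tower identity $\mathbb{E}[x_i\mid\mathscr{D}_{i^+}^{\text{con}}]=\widehat{x}_{i|i}$, and the matrix identity
\begin{align*}
\mathbb{E}\bigl[x_i^{\prime} M x_i - \widehat{x}_{i|i}^{\prime} M \widehat{x}_{i|i}\bigm|\mathscr{D}_{i^+}^{\text{con}}\bigr]
= \mathbb{E}\bigl[(x_i-\widehat{x}_{i|i})^{\prime} M (x_i-\widehat{x}_{i|i})\bigm|\mathscr{D}_{i^+}^{\text{con}}\bigr],
\end{align*}
valid for any symmetric~$M$. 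Applied to the symmetric combination $M=\widehat{Q}+\widehat{R}+\widehat{R}^{\prime}$ that collects the cross-terms of the new $J^{\text{con}}_i$, this separates the cost-to-go into a piece depending on the encoder $\xi_i$ only through the matrix-weighted estimation-error covariance $\mathbb{E}[(x_i-\widehat{x}_{i|i})(x_i-\widehat{x}_{i|i})^{\prime}\mid\mathscr{D}_{i^+}^{\text{con}}]$, plus a future term $\mathbb{E}[\Gamma_{i+1}\mid\cdot]$ whose dependence on $\xi_i$ is again only through future error covariances, by the controls-forgetting hypothesis on $\{\xi_j^{CF,i}\}_{i+1}^T$.

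Second, I would establish the multivariable analogue of the translation symmetry~(\ref{translationSymmetryAtAnyTime}). The recursion $x_i=Ax_{i-1}+Bu_{i-1}+Ew_{i-1}$ with $w_{i-1}$ independent of $\mathscr{D}_{(i-1)^+}^{\text{con}}$ gives
\begin{align*}
\rho_{i|i-1}\bigl(x\bigm|\mathscr{D}_{(i-1)^-}^{\text{con}},u_{i-1}=u\bigr)
= \rho_{i|i-1}\bigl(x-B(u-\widetilde{u})\bigm|\mathscr{D}_{(i-1)^-}^{\text{con}},u_{i-1}=\widetilde{u}\bigr),
\end{align*}
so translating each cell $\Delta$ of an admissible quantizer $\xi$ to $\Delta+B(\widetilde{u}-u)$ yields an admissible $\widetilde{\xi}$ with identical conditional cell probabilities and identical conditional error covariance when $u_{i-1}=\widetilde{u}$. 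For a partially observed sensor reading $y_i=Cx_i+Dv_i$, the same change of control shifts the observation by $CB(u-\widetilde{u})$, and since $\{v_i\}$ is IID and independent of $\{w_i\}$ and of $u_{i-1}$ the convolution structure is preserved, so the translation argument transfers verbatim to the $y$-space. Combined with the controls-forgetting property of the later encoders, this forces $\inf_{\xi}\Gamma_i$ to coincide for any two values of $u_{i-1}$; hence an optimizer can be chosen controls-forgetting from time~$i-1$, and the resulting encoding maps and their performance depend neither on $u_{i-1}$ nor on $\{\widetilde{\mathcal{K}}_j\}_i^T$.

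I expect the main obstacle to be bookkeeping for the cross-term~$\widehat{R}$. One must verify that after the matrix identity is applied, the residual $\widehat{Q}+\widehat{R}+\widehat{R}^{\prime}$ genuinely functions as a non-negative distortion weight on the error covariance and leaves no isolated $\widehat{x}_{i|i}$-dependence that would recouple the encoder's objective to the control gains. This reduces to an algebraic positive-semi-definiteness check on $\widehat{Q}+\widehat{R}+\widehat{R}^{\prime}$ using the block semi-definiteness of $\begin{bmatrix}\widehat{P}&\widehat{R}^{\prime}\\ \widehat{R}&\widehat{Q}\end{bmatrix}$, a property inherited in the induction from the assumed semi-definiteness of $\begin{bmatrix}P&R^{\prime}\\ R&Q\end{bmatrix}$ in the cost~\eqref{modifiedObjectiveFunction}. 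With this in hand, the remainder is a direct vectorization of the scalar proof and both conclusions of the lemma follow.
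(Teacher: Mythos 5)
Your proposal is correct and follows essentially the same route as the paper's proof sketch: reduce the encoder-dependent part of the cost-to-go to a quadratic form in the estimation error weighted by $\widehat{Q}+\widehat{R}+\widehat{R}^{\prime}$, then use translation symmetry of $\rho_{\left. i \right| i-1}$ under shifts by $B(u-\widetilde{u})$ together with cell translation to show that $\inf_{\xi}\Gamma_i$ is independent of $u_{i-1}$. Your direct statement of the shift symmetry sidesteps the paper's invertibility-of-$A$ caveat, and your remarks on the partially observed channel and on the positive semi-definiteness of $\widehat{Q}+\widehat{R}+\widehat{R}^{\prime}$ are sensible refinements rather than departures.
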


\emph{Proof Sketch:} We can rewrite the part of the cost-to-go that depends
on on the control~$u_{i-1}$. As in~\eqref{writingCostUsingEstimationError}, it is possible to
rewrite this in such a way that the only dependence on~$
\widehat{x}_{ \left. i \right| i} $ is through a quadratic form of the
estimation error~$x_i
- \widehat{x}_{ \left. i \right| i} $:
\begin{align*}
     {\mathbb{E}}
    \left[  \left.
    J_{i}^{\rm{con}} 
    +
    \Gamma_{i+1}
            \right|  \mathscr{D}_{i^+}^{\rm{con}}
    \right]
&
    = \alpha  +
     {\mathbb{E}}
    \left[
    \left.
   {\left(  \beta+
{\widehat{\beta}}\right) }^{\prime}   x_i
     +     x_i^{\prime}
      \left( {\widehat{P}} + {\widehat{R}} + {\widehat{R}}^{\prime} + {\widehat{Q}} \right) x_i
    \right|  \mathscr{D}_{i^+}^{\rm{con}}
   \right]
\\
&
    \ \ \ \
     +
     {\mathbb{E}}
    \left[  \left.
       { \left( x_i - \widehat{x}_{ \left. i \right| i} \right)}^{\prime}
    \left( {\widehat{R}} + {\widehat{R}}^{\prime} + {\widehat{Q}} \right)
      \left( x_i - \widehat{x}_{ \left. i \right| i} \right)
 +   \Gamma_{i+1}
            \right|  \mathscr{D}_{i^+}^{\rm{con}}
    \right].
\end{align*}
The part of the RHS that depends on $\xi_{i}$ is:
\begin{gather*}
     {\mathbb{E}}
    \left[  \left.
    \Gamma_{i+1}
            \right|  \mathscr{D}_{i^+}^{\rm{con}}
    \right]
+
     {\mathbb{E}}
    \left[  \left.
       { \left( x_i - \widehat{x}_{ \left. i \right| i} \right)}^{\prime}
    \left( {\widehat{R}} + {\widehat{R}}^{\prime} + {\widehat{Q}} \right)
      \left( x_i - \widehat{x}_{ \left. i \right| i} \right)
            \right|  \mathscr{D}_{i^+}^{\rm{con}}
    \right].
\end{gather*}
The minimum of this quantity over different~$\xi_i$ will be
independent of~$u_{i-1}$ if the density~$\rho_{ \left. i \right| {i-1} }$
is symmetric w.r.t. translations in the control.
If the matrix~$A$ is invertible, then
$
\rho_{ \left.  A x_i \right| z_0^{i-1} } \left(  x \right)
=
\rho_{ \left. i -1 \right| {i-1} } \left(  A^{-1} x \right).
$
Let $u, {\widetilde{u}}$ be two possible values for~$u_{i-1}$.
Then: 
\begin{align*}
   \rho_{ \left. i \right| {i-1} } \left( x \right)
&
  =
  {
    \left.
       \rho_{ \left.  A x_i \right| z_0^{i-1} }
           \left(  { \cdot - B u }   \right)
       \circledast
       \rho_{ {E}  w }  \left(  \cdot \right)
    \right|
  }_{x} \ \text{and,}
\\
   {\widetilde{\rho}}_{ \left. i \right| {i-1} } \left( x \right)
&
  =
  {
    \left.
       \rho_{ \left.  A x_i \right| z_0^{i-1} }
           \left(  { \cdot - B {\widetilde{u}} }   \right)
       \circledast
       \rho_{ {E}  w }  \left(  \cdot \right)
    \right|
  }_{x}.
\end{align*}
If the following three conditions hold:
{(1)~the matrix~$A$ is invertible,}
{(2)~the conditional density
$ {\rho}_{ \left. i-1 \right| {i-1} } $
 is a `well-behaved' function, for example, a function
of bounded variation, and}
{(3)~the noise random variables $w_i, v_i$ have `well-behaved'
densities,}
then it is straightforward  to deduce the following symmetry w.r.t.
translations:
\begin{align*}
   \rho_{ \left.  i \right| i -1 }
    \left(  x - u  \left|
 {\mathscr{D}}_{ { \left(  i -1 \right) }^{-} }^{\rm{con}} , u_{i-1} = u
              \right.
   \right)
&
   =
   \rho_{ \left.  i \right| i -1 }
    \left(  x - \widetilde{u} \left|
     {\mathscr{D}}_{ { \left(  i -1 \right) }^{-} }^{\rm{con}}
             , u_{i-1} = \widetilde{u}
              \right.
   \right) .
\end{align*}
If the matrix~$A$ is not invertible, or if any of the relevant densities
have Dirac-delta functions, then too, this symmetry property holds.
Proving that needs some slightly more delicate arguments. The rest is similar to the proof of Lemma~\ref{lemmaOnQuantizerDistortion}.
\hfill $\square$

The remaining lemmas of Section~\ref{preliminaryLemmasSection} are
straightforward to generalize to the multivariable case.
Moreover, our results clearly extend also to the case of  deterministic,
time-varying coefficients of the plant dynamics and of the objective function.

\section{Dynamic designs for other models of channels
\label{extensions}
}
Our results for Design problem~\ref{fullyDynamicControls} extend to other channel models. In this section, we study a handful of channel models, all
coming from within three broad classes of messaging a sequence of real
numbers. These are: (1)~quantized messaging, (2)~unquantized but
irregular, event-triggered sampling, and (3) unquantized messaging
corrupted by additive channel noise. For each of these channel models, we find that the dynamic LQ design problem gets a separated optimal solution despite the existence of a dual effect in the corresponding networked control systems. To obtain this design simplification, we also assume that at all times, the channel output is perfectly visible to the encoder. Thus in each one of our channel models, there will be an ideal, delay-free feedback channel copying the actual inputs for the controller back to the encoder.

Our results also clearly extend to the case where we allow
deterministic, time-varying coefficients for the plant equation, and for the quadratic performance costs. These results also apply to the case where the quantizer word-lengths at different times are deterministic but time-varying.
In this section, we use the performance cost in~\eqref{objectiveFunction}, where the communication cost $J^{\textrm{Comm}}$ takes a positive functional form depending on the channel model.
To show these extensions for all the other channel models we study, we only need to find the appropriate versions of Lemma~\ref{lemmaOnQuantizerDistortion}.
Once this is done, all the steps in the proofs for
Lemmas~\ref{theorem:controlsForgettingEncoderIsOptimal}~-~\ref{theorem:CElawsAreOptimal} and Theorem~\ref{theorem:CElawsForCFencoders} can be repeated with. For each of the channel models we consider, an encoder that is controls-forgetting from time~$0$ will be optimal in combination with the certainty equivalence control laws of \eqref{optimalControlLaw}. 
\subsection{Quantizer with its rate chosen real-time}
We describe below Design problem~\ref{fullyDynamicControls} for quantized control where the quantization rate is to be chosen real-time. 
The rate has an expense attached, and there may be both a common upper bound on the sizes of individual codewords and a separate upper bound on the average data rate over the entire horizon.
\subsubsection{Communication cost}
The channel is an error-free, discrete alphabet channel with a variable sized alphabet.
With each channel use, the size of the alphabet $\eta_t$, as well as the codeword $\nu_t \in \left\{ 1, 2, \ldots , \eta_t \right\}$, must be chosen causally by the encoder. 
Let $\phi\left( \eta \right) = \log_2{\eta}$ be a measure of the data rates, and let the positive integer
$\overline{\eta}$ denote an upper limit on the alphabet size at any time.
Then the communication cost incurred at time~$t$ can be described
thus:
\begin{align*}
    \varphi_t \left( \eta_t \right)
&
    =
    \begin{cases}
    \phi \left(\eta_i\right)  & \ \text{if} \ \eta_i \le \overline{\eta},
    \\
    + \infty  & \ \text{if} \ \eta_i > \overline{\eta}.
    \end{cases}
\end{align*}
Let the positive real number ${\mathscr{R}} \le \overline{\eta}$ denote an upper
limit on the average data rate over the entire horizon. We define the communication
cost as follows:
\begin{align}
     J^{\textrm{Comm}}
&
     =
     \begin{cases}
 m \cdot \mathbb{E} \left[
     \sum_{i=0}^{T}  \varphi \left( \eta_i \right)
            \right]
      &
     \ \text{if} \ \sum_{i=0}^{T}  \varphi \left( \eta_i \right)
       \le  {\mathscr{R}} \cdot \left( T + 1 \right) ,
     \\
     + \infty &
     \ \text{if} \ \sum_{i=0}^{T}  \varphi \left( \eta_i \right)
       >  {\mathscr{R}} \cdot \left( T + 1 \right) ,
     \end{cases} \label{variableRateEncJComm}
\end{align}
where $m$ is a fixed non-negative scalar.
It is easy to see that the signals
$
         x_t, \,
$ $
         \left \{\iota_j \right \}_0^{t-1} ,
$ $
         \left \{ z_j \right \}_0^{t-1},
$ $
         \left \{\xi_j \right \}_0^{t-1}
$
are sufficient statistics for encoding decisions, where of course
$z_t = \left( \eta_t , \nu_t \right)$.
We now present a suitable version of
Lemma~\ref{lemmaOnQuantizerDistortion}.
\begin{lemma}
[Variable rate controls-forgetting encoder optimal for affine controls]
\label{lemmaQuantizerRateChosenRealTime}
Fix time~$t=i$ and apply control laws affine from time~$i$.
Suppose that for all times~$j> i$
 we have optimal encoding policies~${\mathcal{E}}_j^* \left( \cdot \right) $
(rules for variable alphabet sizes~$\eta_j$ as well as actual
quantization maps) such that their shapes and performances are independent of the partial control
waveform $\left\{ u_{i},  \ldots, u_{T} \right\}$.  Then, for all
times~$j> i-1$ we get optimal encoding policies~${\mathcal{E}}_j^* \left(
 \cdot \right) $ such that their shapes and performances are independent of the slightly
longer waveform $\left\{ u_{i-1},  u_{i},  \ldots, u_{T} \right\}$.
\end{lemma}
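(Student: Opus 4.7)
The plan is to mimic the proof of Lemma~\ref{lemmaOnQuantizerDistortion}, treating the alphabet size~$\eta_i$ as an extra component of the encoding decision at time~$i$ and tracking how the new term $m\,\mathbb{E}\!\left[\varphi_i(\eta_i)\right]$ and its horizon-wide budget constraint interact with the translation-symmetry argument. First I would write out the cost-to-go at time~$t=i$ under the induction hypothesis, noting that with controls affine from time~$i$ and the partial set of later encoders controls-forgetting from time~$i$, the cost-to-go still decomposes as in~\eqref{writingCostUsingEstimationError} into a quadratic in $x_i,\widehat{x}_{i|i}$ (whose coefficients depend on the affine control gains but not on the later encoders) plus an expected future distortion $\mathbb{E}[\Gamma_{i+1}|\mathscr{D}^{\rm con}_{i^+}]$ (depending on the later encoders but, by hypothesis, not on $\{u_i,\ldots,u_T\}$). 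Adding $m\,\varphi_i(\eta_i)$ does not disturb this decomposition because $\varphi_i$ is a function of~$\eta_i$ alone.

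Next I would isolate the part of the cost-to-go that depends on the encoding decision~$z_i=(\eta_i,\nu_i)$, namely a sum of the local quantization variance $-(\widehat{\nu}+\widetilde{\nu})\,\mathbb{E}[(x_i-\widehat{x}_{i|i})^2|\mathscr{D}^{\rm con}_{i^+}]$, the propagated future distortion $\mathbb{E}[\Gamma_{i+1}|\mathscr{D}^{\rm con}_{i^+}]$, and the instantaneous rate penalty $m\,\varphi_i(\eta_i)$. Minimization is over pairs $(\eta_i,\xi_i)$ satisfying both the per-step cap $\eta_i\le\overline{\eta}$ and the cumulative cap in~\eqref{variableRateEncJComm}. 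The key claim to prove is that the infimum of this quantity, as a function of~$u_{i-1}$, is actually independent of~$u_{i-1}$; the optimal encoder at time~$i$ can therefore be chosen to depend only on~$\mathscr{D}^{\rm con}_{(i-1)^-}$, which is the controls-forgetting-from-time-$(i-1)$ property.

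To establish this invariance I would reuse the translation construction from the proof of Lemma~\ref{lemmaOnQuantizerDistortion}: given a candidate encoder $(\eta_i,\xi)$ tuned for $u_{i-1}=u$, define a partner $(\eta_i,\widetilde{\xi})$ for $u_{i-1}=\widetilde{u}$ by keeping the alphabet size intact and shifting every cell by $\widetilde{u}-u$. Using the translation symmetry~\eqref{translationSymmetryAtAnyTime} of $\rho^{\zeta}_{i|i-1}$ together with the fact that all later encoders are controls-forgetting from time~$i$ (and therefore respond only to~$\zeta$, not to $x$), one checks that the codeword probabilities, the quantization variance, and the expected future distortion all coincide for the two partners. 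The rate penalty $\varphi_i$ and the cumulative budget constraint, being functions solely of the sequence of alphabet sizes, are obviously invariant under this translation. Hence $(\eta_i,\xi)$ and $(\eta_i,\widetilde{\xi})$ incur identical cost-to-go under their respective values of~$u_{i-1}$, and the infimum is the same for all~$u_{i-1}$. This produces an optimal $\mathcal{E}_i^*$ whose shape is independent of~$u_{i-1}$, completing the induction step and extending the controls-forgetting property back from time~$i$ to time~$i-1$.

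The main obstacle I expect is verifying that the cumulative rate constraint does not smuggle a $u_{i-1}$ dependence back in. Concretely, I would need to argue that the optimal policy for $\eta_i$ can be made measurable with respect to the common information (the past alphabet sizes and channel outputs), so that the translated encoder continues to respect the horizon budget whenever the original did. Because the hard $+\infty$ penalty in~\eqref{variableRateEncJComm} triggers only on the realized sequence $\{\eta_j\}_0^T$, and because the translation construction preserves every $\eta_j$, this is essentially automatic; the only subtlety is to handle tie-breaking at the budget boundary so that the partner encoder is admissible whenever the original is. Once this measurability and admissibility bookkeeping is handled, the remaining steps are mechanical adaptations of the arguments in Section~\ref{preliminaryLemmasSection}.
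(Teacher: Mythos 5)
Your proposal is correct and follows essentially the same route as the paper: the paper's proof also fixes the alphabet size $\eta$, invokes the fixed-rate translation-symmetry argument of Lemma~\ref{lemmaOnQuantizerDistortion} to get $u_{i-1}$-independence of the optimal quantization map and its distortion for each fixed $\eta$, and then observes that the communication costs depend only on the (translation-invariant) alphabet sizes, so the optimal choice of $\eta_i$ is likewise independent of $u_{i-1}$. Your version merely spells out explicitly the cell-shifting construction and the budget-admissibility bookkeeping that the paper leaves implicit.
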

\begin{proof}
Consider the encoder choice at time~$t=i$. For any fixed alphabet size $\eta$,
let ${\mathcal{E}}^{\eta *} \left( \cdot \right) $ be the encoder
possessing the two properties: (1)~its alphabet size equals~$\eta$,
and (2)~this encoder in combination with optimal policies for
the later encoders $ \left\{ {\mathcal{E}}_j^*  \right\}_{j=i+1}^T $
(meaning policies for variable alphabet sizes and quantization maps)
achieves the lowest possible values for the performance costs.
Where by performance cost of the encoder we mean those parts
of the performance cost that, once affine control policies are fixed,
depend on the encoder.

For every fixed~$\eta$, we know that ${\mathcal{E}}^{\eta *} \left( \cdot
\right) $ and the statistics of its outputs are independent of the
policy for control~$u_{i-1}$. Hence when this quantizer is used in
combination with an optimal set of later encoders, the quantization
distortion at time~$t=i$, and the statistics of channel outputs at
all times~$j \ge i$ become independent of the control value~$u_{i-1}$.
Likewise the communication costs incurred at times ~$j \ge i$ become
independent of the control value~$u_{i-1}$.
Since every admissible choice of $\eta_t$ leads to this property, the Lemma
is proved.
\end{proof}
\noindent
We now present the main result:
\begin{theorem}[Optimality of separation and certainty equivalence]
\label{theorem:CElawsForCFencodersVariableQuantizer}
For Design problem~\ref{fullyDynamicControls}, with the discrete alphabet
channel of variable alphabet size, the performance
cost~(\ref{objectiveFunction}) with communication cost~\eqref{variableRateEncJComm} is
minimized by applying the linear control laws
\begin{align*}
  u_t^*
&
  =
  -      k^*_t \,
         {\widehat{x}}_{\left.t\right| t}
\end{align*}
in combination with the following encoder which is controls-forgetting from
time~$0$:
\begin{align*}
    \epsilon_t^*
        \left( \, \zeta_t \,  ;
               \left\{ z_i \right\}_0^{t-1},
               \left\{ \epsilon_i  \left( \cdot \right) \right\}_0^{t-1}
        \right)
&
 =
 {\underset{ \epsilon \left( \cdot \right)
           }
           { \arg\inf
           }
 }
  \
 \Gamma_{i}
  \Bigl(  \epsilon \left( \cdot \right)
                \,  ;  \,
               \left\{ z_i \right\}_0^{t-1},
               \left\{ \epsilon_i  \left( \cdot \right)  \right\}_0^{t-1}
  \Bigr),
\end{align*}
where,
$
    k^*_i = a \frac{\beta_{i+1}}{q + \beta_{i+1}},
    \beta_i = p +   {\frac{a^2 q \beta_{i+1}}{ q + \beta_{i+1} }} ,
    \beta_{T+1} = 1, \ \text{and} \
    \lambda_i = {\frac{a^2 \beta_{i+1}^2 }{ q + \beta_{i+1} }}
$ and where,
\begin{align*}
  \Gamma_T
&
  =
 \begin{cases}
  + \infty ,
  &
  \rm{if}
  \ \sum_{i=0}^{T} \varphi \left( \eta_i \right) > {\mathcal{R}}
                    \cdot \left( T + 1 \right) ,
  \\
  {\mathbb{E}} \left[
        { \bigl( \zeta_{T} - \widehat{\zeta}_{\left.T\right| T}
          \bigr)}^2
         + m \cdot
             \varphi \left( \eta_{{}_T} \right)
    \left|
                 \epsilon_T \left( \cdot\right) , \,
               \left\{ z_i, \epsilon_i  \left( \cdot \right) \, \right\}_0^{T-1}
   \right.
                            \right]
  ,
  & \rm{otherwise,}
 \end{cases}
\\
  \Gamma_t
&
  =
  \lambda_t \
  {\mathbb{E}} \left[
        { \bigl( \zeta_{t} - \widehat{\zeta}_{\left.t\right| t}
          \bigr)}^2
         + m \cdot
             \varphi \left( \eta_t \right)
    \left|
                 \epsilon_t \left( \cdot\right) ,
                 {\mathscr{D}}_{ { \left( t-1\right)  }^{+} }^{\rm{con}}
   \right.
                            \right]
  +
  {\mathbb{E}} \left[
        \Gamma_{t+1}^* \left(
               \left\{ z_i , \epsilon_i  \left( \cdot \right)  \, \right\}_0^{t}
                      \right)
              \right],
\\
 \Gamma_t^*
&
 =
 {\underset{ \epsilon \left( \cdot \right)
           }
           { \inf
           }
 }
  \
 \Gamma_t \left( \epsilon  \right) .
\end{align*}
Moreover, this control law is a certainty equivalence law.
\end{theorem}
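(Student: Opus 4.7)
The plan is to follow the proof of Theorem~\ref{theorem:CElawsForCFencoders} step for step, using Lemma~\ref{lemmaQuantizerRateChosenRealTime} in place of Lemma~\ref{lemmaOnQuantizerDistortion} and verifying that the new running communication cost $m \cdot \varphi(\eta_t)$ is absorbed cleanly into the additive encoder-dependent remainder at every stage of the backward recursion. The overall strategy is therefore a dynamic programming argument indexed backwards from $t=T$, with two interleaved induction hypotheses: (i)~under affine controls from time~$i$, the optimal encoder from time~$i$ is controls-forgetting from time~$i-1$ and its distortion plus rate cost is independent of $u_{i-1}$, and (ii)~under such a controls-forgetting encoder, the optimal $u_i$ is a linear function of $\widehat{x}_{i|i}$ with gain $k^{*}_i$ depending only on the LQ parameters.

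\textbf{Base case.} Since $J^{\textrm{Comm}}$ is already determined by the choices $\{\eta_j\}_0^{T}$ and does not depend on $u_T$, the terminal-stage calculation of Lemma~\ref{lemma:optimalUT} is unchanged: the optimal $u_T^{*} = -\tfrac{a}{1+q}\widehat{x}_{T|T}$ and $V_T^{*}$ is a quadratic in $x_T,\widehat{x}_{T|T}$ plus the terminal distortion plus the additive constant $m\varphi(\eta_T)$ inherited from $J^{\textrm{Comm}}$. The added constant simply joins the accumulating $\Gamma_{T}$ term without interacting with the quadratic structure in $x_T$ and hence does not affect the form used by the induction.

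\textbf{Induction step.} Rerunning the proof of Lemma~\ref{theorem:controlsForgettingEncoderIsOptimal} with the expanded $\Gamma$-term, at time~$i$ the cost-to-go $W_i$ under controls affine from time~$i$ still decomposes as a quadratic $J_i^{\rm{con}}(u_i,x_i)$ plus an encoder/rate-dependent remainder that is a sum of weighted future squared estimation errors and future communication costs. Lemma~\ref{lemmaQuantizerRateChosenRealTime} then says exactly what is needed: for every feasible choice of $\eta_i$, the joint minimum over quantizer shapes of the sum of the stage distortion $\lambda_i {\mathbb{E}}[(\zeta_i-\widehat{\zeta}_{i|i})^2\mid\cdot]$ and the residual future penalty is invariant under translations of $u_{i-1}$, because the conditional density $\rho_{i|i-1}^{\zeta}(\cdot)$ translates linearly with $u_{i-1}$ and every later controls-forgetting encoder can be translated by a matching amount without affecting either the future distortions or the future rate costs. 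Since both caps in~\eqref{variableRateEncJComm}---the per-letter cap $\overline{\eta}$ and the horizon-average cap $\mathscr{R}(T+1)$---are functions of the rate sequence only, not of the controls, feasibility of $\eta_i$ is preserved under this translation, so the minimum over $\eta_i$ is also independent of $u_{i-1}$. Once the encoder at time~$i$ is shown to be controls-forgetting, the analogue of Lemma~\ref{theorem:CElawsAreOptimal} gives the linear gain $k^{*}_{i-1}$ in the recursion $\beta_i = p + a^2 q \beta_{i+1}/(q+\beta_{i+1})$, $\beta_{T+1}=1$, closing the induction.

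\textbf{Certainty equivalence identification.} To call the resulting law a certainty equivalence control in the sense of Definition~\ref{ceDefinition}, I would quote Lemma~\ref{lemma:affineCElaws}: the $k^{*}_i$ produced by the induction coincide with the $k^{CE}_i$ there, and $\mathbb{E}[w_t\mid{\mathscr{D}}_{t^{-}}^{\rm{con}}]=0$ by independence of $w_t$ from all past data. The main obstacle I anticipate is justifying Lemma~\ref{lemmaQuantizerRateChosenRealTime} carefully when the horizon-average constraint $\mathscr{R}(T+1)$ is tight, since in principle the feasibility of future rate choices depends on past choices and could couple the encoder design across time; the argument goes through because this coupling is between rate choices only and not between rates and controls, so translation-symmetry in $u_{i-1}$ is preserved. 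Subject to verifying this point, the remaining bookkeeping merely rewrites the $\Gamma_t$ recursion to include the additive $m\cdot\varphi(\eta_t)$ penalty, yielding exactly the form stated in Theorem~\ref{theorem:CElawsForCFencodersVariableQuantizer}.
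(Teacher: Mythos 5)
Your proposal follows exactly the paper's own argument: seed the backward recursion with Lemma~\ref{lemma:optimalUT}, alternate Lemma~\ref{lemmaQuantizerRateChosenRealTime} with Lemma~\ref{theorem:CElawsAreOptimal} going backwards in time, and invoke Lemma~\ref{lemma:affineCElaws} to identify the resulting gains as certainty-equivalence gains. The paper's proof is in fact just this three-step recipe stated without elaboration, so your additional care about absorbing the $m\cdot\varphi(\eta_t)$ terms and about the horizon-average rate constraint only fills in bookkeeping the paper leaves implicit.
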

\begin{proof}
Starting with the result of Lemma~\ref{lemma:optimalUT} as a seed,
repeatedly apply in sequence
Lemmas~\ref{lemmaQuantizerRateChosenRealTime},
\ref{theorem:CElawsAreOptimal}. This proves optimality of the above
combination.
Lemma~\ref{lemma:affineCElaws} implies that the controls laws of
\eqref{optimalControlLaw} are indeed certainty equivalence control
laws as per van der Water and Willems~\cite{vanDerWaterWillems1981}.
\end{proof}
\subsection{Event-triggered sampling}
The second model provides instantaneous, error-free transmission of any input real number. It is suitable only with systems working in real-time, since it has infinite capacity in the Shannon sense. To make this channel model represent a bottleneck, one must limit how often the channel can be used over prescribed time intervals. This we do by charging a communication cost for transmissions. This channel model is suitable for loops with event-triggered sampling. We now summarize parallel developments for event-triggered messaging.
\subsubsection{Communication cost}
The channel is an ideal, delay-free continuous valued one with no amplitude constraints. We will stipulate that the input to the channel is either a special silence symbol or a real number. In either case, the output will be a faithful reproduction of the input. Hence, 
the encoder for event-triggered sampling can be represented by the following map from plant output to channel input
\begin{align*}
  z_i
&
  =
  \begin{cases}
       x_i  & \text{if} \ x_i  \notin {\mathcal{S}}_i \\
       \text{SILENCE}  & \text{if} \ x_i  \in {\mathcal{S}}_i ,
  \end{cases}
\end{align*}
where policies for the silence sets~${\mathcal{S}}_i$ have to be measurable w.r.t. the filtration generated by the data~${\mathscr{D}}_{ { \left( i -1 \right) }^{+} }^{\rm{con}}$. Let $\eta_i$ denote the random number of state samples transmitted up to and including time~$t=i$. Then we can write
$
\eta_t = \sum_{i=0}^t  {\mathds{1}}_{ \left\{ x_t \notin
                                        {\mathcal{S}}_i\right\} }
$.
Let the non-negative number~$N_0 \le T + 1 $ denote an initial budget of
samples. This initial budget is a hard limit, and the total number of samples taken over the entire horizon can never exceed~$N_0$. Then we define the communication cost as follows:
\begin{align}
\label{communicationCost:ETsampling}
     J^{\textrm{Comm}}
&
     =
     \begin{cases}
 m \cdot \mathbb{E} \left[
     \eta_T
            \right]
     &
     \text{if} \ \eta_T
       \le N_0 ,
     \\
     + \infty
     &
     \text{if} \ \eta_T
       > N_0 ,
     \end{cases}
\end{align}
where $m$ is a fixed non-negative scalar.
It is easy to see that the signals
$
         x_t, \,
         \left \{ z_j \right \}_0^{t-1},
         \left \{\xi_j \right \}_0^{t-1} ,
         \left \{ \eta_j \right \}_0^{t-1}
$
are sufficient statistics for sampling decisions. Note also that the
record of sample counts~$
         \left \{ \eta_j \right \}_0^{t-1}
$
can be causally deduced from the record of channel outputs~$
         \left \{ z_j \right \}_0^{t-1}
$.

If we set~$N_0$ to be a finite number less than the horizon length~$T+1$ and set the
multiplier~$m$ to zero, then we get a design problem with a fixed budget~$N_0$ and no
cost attached to any number of samples within the budget. If instead we set the
multiplier~$m$ to be some positive number and set the bound~$N_0$ to be~$T+1$,
then we get a design problem with no budget constraint but with a communication cost
growing linearly with the number of samples taken
over the entire horizon. These two kinds of design problems
and their hybrids will all be simultaneously
studied by examining the general case where $m$
can be any nonnegative number, and~$N_0$ any positive
number.
\begin{lemma}
[Controls-forgetting sampler optimal for affine controls]
\label{lemmaCFsamplerForAffineControlsETsampling}
Fix time~$t=i$ and apply control laws affine from time~$i$.
Suppose that for all times~$j> i$
 the optimal silence sets~${\mathcal{S}}_j^* \left( \cdot \right) $
and their performances are independent of the partial control
waveform $\left\{ u_{i},  \ldots, u_{T} \right\}$.  Then, for all
times~$j> i-1$ the optimal silence sets~${\mathcal{S}}_j^* \left(
 \cdot \right) $  and their performances are independent of the slightly
longer waveform $\left\{ u_{i-1},  u_{i},  \ldots, u_{T} \right\}$.
\end{lemma}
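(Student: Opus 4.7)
The plan is to mirror the structure of the proof of Lemma~\ref{lemmaOnQuantizerDistortion}, and of its variable-rate analogue Lemma~\ref{lemmaQuantizerRateChosenRealTime}, replacing the quantizer map $\xi_i(\cdot)$ by the silence-set choice ${\mathcal{S}}_i$, and the quantization distortion by the distortion incurred under event-triggered sampling. The key observation is that, as in those proofs, the encoder at time~$i$ enters the cost-to-go only through the conditional density $\rho_{\left.i\right|i-1}$, which enjoys the translation symmetry \eqref{translationSymmetryAtAnyTime} with respect to $u_{i-1}$.

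First, under control laws affine from time~$i$, and by the inductive hypothesis that later samplers are optimal and controls-forgetting from time~$i$, one repeats the algebra of \eqref{writingCostUsingEstimationError} to show that the part of the cost-to-go at time~$i^-$ depending on the silence set ${\mathcal{S}}_i$ reduces to the sum
\[
 \lambda_i \, {\mathbb{E}} \left[ \left. {\left(x_i - \widehat{x}_{\left.i\right|i}\right)}^2 \right| {\mathscr{D}}_{i^+}^{\rm{con}} \right] + m \, {\mathbb{E}}\left[ \left. {\mathds{1}}_{\{x_i \notin {\mathcal{S}}_i\}} \right| {\mathscr{D}}_{{(i-1)}^+}^{\rm{con}} \right] + {\mathbb{E}} \left[ \left. \Gamma_{i+1}^{*} \right| {\mathscr{D}}_{i^+}^{\rm{con}} \right] ,
\]
plus terms that do not involve ${\mathcal{S}}_i$. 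The first term is the local estimation distortion incurred by choosing to be silent on ${\mathcal{S}}_i$, the second is the communication charge for the (possibly) transmitted sample, and the third collects future estimation and communication costs.

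Second, I would invoke the translation symmetry. For two candidate values $u, \widetilde{u}$ for $u_{i-1}$, given a silence set ${\mathcal{S}}_i^{u}$ designed optimally when $u_{i-1} = u$, I construct the translated set ${\mathcal{S}}_i^{\widetilde{u}} := {\mathcal{S}}_i^{u} + (\widetilde{u} - u)$. By \eqref{translationSymmetryAtAnyTime}, the silence probability ${\mathbb{P}}[x_i \in {\mathcal{S}}_i^{u} \mid u_{i-1} = u ]$ equals ${\mathbb{P}}[x_i \in {\mathcal{S}}_i^{\widetilde{u}} \mid u_{i-1} = \widetilde{u} ]$, and the conditional distributions of the estimation error $x_i - \widehat{x}_{\left.i\right|i}$ given each transmission outcome match between the two scenarios. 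Hence the local distortion and the communication charge at time~$i$ take identical values under the two control choices. The induction hypothesis that later silence sets are controls-forgetting from time~$i$ then lets the same translated construction be propagated through the future encoders acting on the $\zeta$-process, giving equality of ${\mathbb{E}}[\Gamma_{i+1}^*]$ as well. Finally, since the per-time transmission probabilities match, the sample-count process $\{\eta_j\}$ has the same joint distribution under either scenario, so the hard budget constraint $\eta_T \le N_0$ is preserved under the bijection ${\mathcal{S}}_i^{u} \leftrightarrow {\mathcal{S}}_i^{\widetilde{u}}$.

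Combining these observations gives $\inf_{{\mathcal{S}}_i} \Gamma_i ( {\mathcal{S}}_i ; u_{i-1} = u ) = \inf_{{\mathcal{S}}_i} \Gamma_i ( {\mathcal{S}}_i ; u_{i-1} = \widetilde{u})$ for every pair $u, \widetilde{u}$, so an optimal silence set at time~$i$ can be selected without any dependence on $u_{i-1}$. Together with the hypothesis, this yields the extended controls-forgetting property for all $j > i-1$. The main obstacle I expect is handling the hard pathwise budget constraint $\eta_T \le N_0$, since it couples silence-set choices across time; this obstacle is dissolved by noting that the translation bijection acts measure-preservingly on the joint law of $\{{\mathds{1}}_{\{x_j \notin {\mathcal{S}}_j\}}\}_{j=0}^{T}$, so admissibility is carried through the bijection. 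The rest is a verbatim repetition of the closing argument of Lemma~\ref{lemmaQuantizerRateChosenRealTime}.
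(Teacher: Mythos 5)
Your proposal is correct and follows essentially the same route as the paper, whose own proof of this lemma is only a two-sentence sketch deferring to the two-step argument of Lemmas~\ref{lemmaOnQuantizerDistortion} and~\ref{theorem:controlsForgettingEncoderIsOptimal}: reduce the sampler's objective to a sum of current and future distortions plus communication charges, then use the translation symmetry~\eqref{translationSymmetryAtAnyTime} to shift the silence sets and match performance across control values. Your explicit treatment of the hard budget constraint $\eta_T \le N_0$ via the measure-preserving translation bijection is a detail the paper leaves implicit, and it is handled correctly.
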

\begin{proof}
As with proving Lemmas 3, 4 we carry out two steps. First we show that because the
cost-to-go is quadratic, the quantizer's objective at time $i$ is to minimize a
sum $\Gamma_i$ of current and future estimation distortions. Second we show that
the minimum of this sum distortion is independent of the control $u_{i-1}$. Thus
the encoder becomes controls-forgetting from time $i-1$.
\end{proof}
\noindent
The main result for event-triggered sampling is presented below.
\begin{theorem}[Optimality of separation and certainty equivalence for
event-triggered sampling]
\label{theorem:CElawsForETsampler}
For Design problem~\ref{fullyDynamicControls}, with the even-triggered
messaging  channel, the performance cost~(\ref{objectiveFunction}) with
communication cost~\eqref{communicationCost:ETsampling} is
minimized by applying the linear control laws
\begin{align*}
  u_t^*
&
  =
  -      k^*_t \,
         {\widehat{x}}_{\left.t\right| t}
\end{align*}
in combination with the following silence set which is controls-forgetting from
time~$0$:
\begin{align*}
    \mathcal{S}_t^*
        \left( \, \zeta_t \,  ;
               \left\{ z_i \right\}_0^{t-1},
               \left\{ \mathcal{S}_i  \left( \cdot \right) \right\}_0^{t-1}
        \right)
&
 =
 {\underset{ \mathcal{S} \left( \cdot \right)
           }
           { \arg\inf
           }
 }
  \
 \Gamma_{i}
  \Bigl(  \mathcal{S} \left( \cdot \right)
                \,  ;  \,
               \left\{ z_i \right\}_0^{t-1},
               \left\{ \mathcal{S}_i  \left( \cdot \right)  \right\}_0^{t-1}
  \Bigr),
\end{align*}
where,
$
    k^*_i = a \frac{\beta_{i+1}}{q + \beta_{i+1}},
    \beta_i = p +   {\frac{a^2 q \beta_{i+1}}{ q + \beta_{i+1} }} ,
    \beta_{T+1} = 1, \ \text{and} \
    \lambda_i = {\frac{a^2 \beta_{i+1}^2 }{ q + \beta_{i+1} }}
$ and where,
\begin{align*}
  \Gamma_T
&
  =
 \begin{cases}
  + \infty ,
  &
  \rm{if}
  \ \eta_T > N_0,
  \\
  {\mathbb{E}} \left[
        { \bigl( \zeta_{T} - \widehat{\zeta}_{\left.T\right| T}
          \bigr)}^2
         + m \cdot
             \varphi \left( \eta_{{}_T} \right)
    \left|
                 \mathcal{S}_T \left( \cdot\right) , \,
               \left\{ z_i, \mathcal{S}_i  \left( \cdot \right) \, \right\}_0^{T-1}
   \right.
                            \right]
  ,
  & \rm{otherwise,}
 \end{cases}
\\
  \Gamma_t
&
  =
  \lambda_t \
  {\mathbb{E}} \left[
        { \bigl( \zeta_{t} - \widehat{\zeta}_{\left.t\right| t}
          \bigr)}^2
         + m \cdot
             \varphi \left( \eta_t \right)
    \left|
                 \mathcal{S}_t \left( \cdot\right) ,
                 {\mathscr{D}}_{ { \left( t-1\right)  }^{+} }^{\rm{con}}
   \right.
                            \right]
  +
  {\mathbb{E}} \left[
        \Gamma_{t+1}^* \left(
               \left\{ z_i , \mathcal{S}_i  \left( \cdot \right)  \, \right\}_0^{t}
                      \right)
              \right],
\\
 \Gamma_t^*
&
 =
 {\underset{ \mathcal{S} \left( \cdot \right)
           }
           { \inf
           }
 }
  \
 \Gamma_t \left( \mathcal{S}  \right) .
\end{align*}
Moreover, this control law is a certainty equivalence law.
\end{theorem}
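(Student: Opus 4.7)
The plan is to mirror the backward-induction proof of Theorem~\ref{theorem:CElawsForCFencoders} for the discrete alphabet channel, using three ingredients adapted to event-triggered sampling: Lemma~\ref{lemma:optimalUT} as the terminal seed, Lemma~\ref{lemmaCFsamplerForAffineControlsETsampling} for the controls-forgetting property of optimal silence sets under affine control laws, and Lemma~\ref{theorem:CElawsAreOptimal} (which, once the encoder is controls-forgetting, depends on the channel model only through the fact that the encoder-induced distortion is independent of future controls) for the linear-in-$\widehat{x}_{t|t}$ structure of the optimal controls.

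First I would initialize at $t = T$. Lemma~\ref{lemma:optimalUT} gives the terminal control $u_T^* = -\tfrac{a}{q+1}\widehat{x}_{T|T}$, and the value function $V_T^*$ decomposes as a quadratic form in $x_T$ and $\widehat{x}_{T|T}$ plus an estimation-error term $\lambda_T\,\mathbb{E}[(x_T - \widehat{x}_{T|T})^2 \mid \mathscr{D}_{T^-}^{\rm con}]$, augmented by the communication-cost contribution $m\,\mathbb{E}[\eta_T]$ or $+\infty$ depending on whether the budget $N_0$ is respected. Then, propagating backward, at each time $t=i$ the induction hypothesis provides a value function whose dependence on $x_i$ and $\widehat{x}_{i|i}$ is quadratic and whose dependence on future silence sets $\{\mathcal{S}_j\}_{j>i}$ appears only through a weighted sum of future distortions $\Gamma_{i+1}^*$ plus the residual communication budget. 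Invoking Lemma~\ref{lemmaCFsamplerForAffineControlsETsampling} at stage $i$ shows that the optimal $\mathcal{S}_i^*$ may be chosen controls-forgetting from time~$i-1$, so neither the sampling distortion nor the induced sample-count statistics depend on $u_{i-1}$. With this separation in place, Lemma~\ref{theorem:CElawsAreOptimal} applies to reduce the optimization over $u_{i-1}$ to a pure quadratic program, producing the gain $k_{i-1}^*$ and the recursions for $\beta_i$, $\lambda_i$ as in the quantized case. Finally, Lemma~\ref{lemma:affineCElaws} identifies the resulting linear law with a certainty-equivalence control law in the sense of van~der~Water and Willems.

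The main obstacle I anticipate is handling the hard total-budget constraint $\eta_T \le N_0$, which, unlike the per-stage rate cost in Theorem~\ref{theorem:CElawsForCFencodersVariableQuantizer}, couples the sampling decisions across all times through the augmented state $\eta_t$. To push the induction through I would carry $(\zeta_t, \eta_t)$ as the dynamic-programming state and verify that the translation-symmetry argument underlying Lemma~\ref{lemmaOnQuantizerDistortion} goes through for silence sets: translating a candidate set $\mathcal{S}$ by $u - \widetilde{u}$ produces another admissible silence set with the same triggering probability under the translated conditional density, hence the same marginal law of $\eta_t$ and the same distortion. This is exactly the ingredient that Lemma~\ref{lemmaCFsamplerForAffineControlsETsampling} encapsulates, so once that lemma is in hand the argument that a controls-forgetting policy remains feasible under the budget and attains the infimum proceeds by the same cell-by-cell matching used in Lemma~\ref{lemmaOnQuantizerDistortion}. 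A subsidiary technical check is that the $+\infty$ value assigned to budget-violating trajectories is preserved under the induction, which follows because the set of silence-set policies respecting $\eta_T \le N_0$ is closed under the translation construction.
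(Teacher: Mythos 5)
Your proposal follows essentially the same route as the paper: seed the backward induction with Lemma~\ref{lemma:optimalUT}, alternate Lemma~\ref{lemmaCFsamplerForAffineControlsETsampling} (controls-forgetting silence sets under affine controls) with Lemma~\ref{theorem:CElawsAreOptimal} (linear optimal controls given a controls-forgetting encoder), and invoke Lemma~\ref{lemma:affineCElaws} to identify the resulting law as a certainty-equivalence law. Your additional remarks on carrying the sample count $\eta_t$ through the induction and checking that the translation construction preserves admissibility under the hard budget $\eta_T \le N_0$ fill in details the paper leaves implicit in its sketch of Lemma~\ref{lemmaCFsamplerForAffineControlsETsampling}, but they do not change the structure of the argument.
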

\begin{proof}
Starting with the result of Lemma~\ref{lemma:optimalUT} as a seed,
repeatedly apply in sequence
Lemmas~\ref{lemmaCFsamplerForAffineControlsETsampling},
\ref{theorem:CElawsAreOptimal}. This proves optimality of the above
combination.
Lemma~\ref{lemma:affineCElaws} implies that the controls laws of
\eqref{optimalControlLaw} are indeed certainty equivalence control
laws as per van der Water and Willems~\cite{vanDerWaterWillems1981}.
\end{proof}
\subsection{Messaging over an noisy linear channel}
This model is a generalization of the classical additive white Gaussian
noise~(AWGN) channel, where we let the channel noise be coloured and non-Gaussian. This channel accepts  real valued inputs $\iota_t$ and delivers outputs $z_t$ with noise added. For $0 \le t \le T$:
\begin{align*}
   z_t &= \iota_t + \chi_t,
\end{align*}
where the channel noise process $\{\chi_i \}$ is IID with mean zero and variance $\sigma_{\chi}^2 < \infty$. At time~$t$, the noise $\chi_t$ is independent of the state, controls and process noises up to and including time $t$. For this style of messaging, we describe a model that allows the encoder to choose the SNR for each message. Naturally the model will also specify costs incurred for choosing message SNRs. 

\subsubsection{Communication cost}
Let the real-valued even function $\phi (\cdot)$ increase with
increasing magnitude of argument, and let $\phi (0) = 0$. An example is
the function $\phi (\iota) = \iota^2$. Let $\overline{\iota}$ denote an upper limit on inputs to the channel. Then the communication cost incurred at a time~$t$ can be described thus:
\begin{align*}
    \varphi_t
&
    =
    \begin{cases}
      \phi (\iota_t)
    &
      \text{if} \ \lvert \iota_t \rvert \leq \overline{\iota},
    \\
      + \infty
    &
      \text{if} \ \lvert \iota_t \rvert > \overline{\iota}.
    \end{cases}
\end{align*}
Let ${\mathcal{P}} \leq \phi (\overline{\iota})$ denote an upper limit on the average power of channel inputs over the entire horizon. We define the communication cost from time $t$ to the horizon end as follows:
\begin{align}
\label{communicationCostForadditiveNoiseChannel}
     J^{\textrm{Comm}}
&
    =
    \begin{cases}
 m \cdot  \mathbb{E} \left[
        \sum_ {j=t}^T \varphi
                 \left( \iota_j \right)
            \right]
      &
       \text{if} \ \sum_{j=0}^T \varphi \left( \iota_j \right)
                    \leq {\mathcal{P}} \cdot \left(T+1 \right)
      \\
       + \infty
      &
       \text{if} \ \sum_{j=0}^T \varphi
          \left( \iota_j \right) > {\mathcal{P}} \cdot \left(T+1 \right).
    \end{cases}
\end{align}
where $m$ is a fixed non-negative scalar.
\subsubsection{Sufficient statistics and scope for the dual effect}
It is straightforward to see that
\begin{gather*}
         x_t, \,
         \left \{\iota_j \right \}_0^{t-1},
         \left \{\xi_j \right \}_0^{t-1} ,
         \left \{ z_j \right \}_0^{t-1}
\end{gather*}
are sufficient statistics at the encoder.
As with quantized and event-triggered messaging, here too there is scope for
the dual effect since the encoding map may be nonlinear.

Clearly there is no dual effect introduced if the upper limit on inputs is removed,
and the encoder implements an affine encoder.
But in general, there is scope for introducing the dual effect.
If the encoder implements the quadratic encoder:
\begin{align*}
\xi_t^{quadratic} &= \eta x_t^2 ,
\end{align*}
then there is a second-order dual effect. Another example of an
admissible encoder that introduces the dual effect in the loop is one
that implements the piecewise-constant encoder:
\begin{align*}
     \xi_t
&
    =
    \begin{cases}
      - {\overline{\iota}}
    &
      \text{if} \ x_t \in \left( - \infty , -\theta \right),
    \\
       0
    &
      \text{if} \ x_t \in \left( -\theta , +\theta \right),
    \\
       \overline{\iota}
    &
      \text{if} \ x_t \in \left( +\theta , - \infty \right),
    \end{cases}
\end{align*}
where the threshold~$\theta$ is fixed. In fact, this encoder has nearly the
same input-output behaviour as the encoders considered in
examples~\ref{example:dualEffect} and~\ref{example:dualEffectInnovationEncoding}.
Using this parallel, one can setup an example of a loop with an additive
noise~(AN)
channel such that the dual effect is present. And, when there is a finite, hard limit on
amplitudes of channel inputs, then the dual effect is  present for any
encoder other than the trivial ones of the form:~$\xi_t \equiv
{\rm{constant}}$. As with other types of messaging, we can show that even though
the dual effect is present, the dynamic encoder-controller problem has a separated
solution and certainty equivalence controls are optimal.
\begin{lemma}
[Controls-forgetting compander optimal for affine controls]
\label{lemmaCFcompanderForAffineControlsANchannel}
Fix time~$t=i$ and apply control laws affine from time~$i$.
Suppose that for all times~$j> i$
 the optimal encoding policies~${\mathcal{E}}_j^* \left( \cdot \right) $
and their performances are independent of the partial control
waveform $\left\{ u_{i},  \ldots, u_{T} \right\}$.  Then, for all
times~$j> i-1$ the optimal encoding policies~${\mathcal{E}}_j^* \left(
 \cdot \right) $  and their performances are independent of the slightly
longer waveform $\left\{ u_{i-1},  u_{i},  \ldots, u_{T} \right\}$.
\end{lemma}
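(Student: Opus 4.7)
My plan is to adapt the argument used in Lemma~\ref{lemmaOnQuantizerDistortion} (and reused in Lemmas~\ref{lemmaQuantizerRateChosenRealTime} and~\ref{lemmaCFsamplerForAffineControlsETsampling}) to the additive-noise setting, substituting \emph{channel input} $\iota_i$ for \emph{codeword label} $\nu_i$. First, I fix a control strategy affine from time~$i$ and the hypothesized controls-forgetting encoders $\{\mathcal{E}^*_j\}_{j>i}$. Exactly as in~\eqref{writingCostUsingEstimationError}, the cost-to-go at time~$i$ depends on the encoding map $\mathcal{E}_i$ only through
\[
\lambda_i \, \mathbb{E}\!\left[(x_i-\widehat{x}_{i|i})^2 \bigm| \mathscr{D}_{i^+}^{\rm{con}}\right] + m \cdot \mathbb{E}\!\left[\phi(\iota_i) \bigm| \mathscr{D}_{i^+}^{\rm{con}}\right] + \mathbb{E}\!\left[\Gamma_{i+1} \bigm| \mathscr{D}_{i^+}^{\rm{con}}\right],
\]
together with the feasibility constraints $|\iota_i|\le\overline{\iota}$ and the cumulative power budget. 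The quadratic structure yielding the coefficient $\lambda_i$ uses only that the state-control cost is quadratic and that the remaining controls are affine; no feature of the channel alphabet is invoked.

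Second, I establish the translation symmetry that drove the earlier proofs. For arbitrary values $u,\widetilde{u}$ of $u_{i-1}$ and any admissible compander $\mathcal{E}(\cdot)$ designed under $u_{i-1}=u$, define the shifted compander $\widetilde{\mathcal{E}}(x) := \mathcal{E}(x-\widetilde{u}+u)$. Because the translation acts on the input (state) side of $\mathcal{E}$ and not on its output, the output range, and in particular the amplitude clip $|\iota|\le\overline{\iota}$, is preserved automatically. Invoking the conditional-density translation identity~\eqref{translationSymmetryAtAnyTime}, the joint law of $(\iota_i,\chi_i,z_i,x_i-\widetilde{u})$ under $\widetilde{\mathcal{E}}$ with $u_{i-1}=\widetilde{u}$ equals that of $(\iota_i,\chi_i,z_i,x_i-u)$ under $\mathcal{E}$ with $u_{i-1}=u$. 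Hence: (i)~the communication cost $m\,\mathbb{E}[\phi(\iota_i)]$ agrees; (ii)~the MMSE estimate $\widehat{x}_{i|i}$ simply shifts by $\widetilde{u}-u$, so the $\lambda_i$-weighted estimation-error variance agrees; and (iii)~since $\{\mathcal{E}^*_j\}_{j>i}$ is controls-forgetting from time~$i$, the future term $\Gamma_{i+1}$ and the tail of communication costs have identical conditional statistics too. Swapping the roles of $u$ and $\widetilde{u}$ gives the reverse inequality. Taking infima, I conclude that the minimized encoder cost does not depend on $u_{i-1}$, and the minimizer $\mathcal{E}^*_i$ can be chosen controls-forgetting from time~$i-1$.

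The main obstacle is preserving the \emph{global} power budget $\sum_{j=0}^T \phi(\iota_j)\le \mathcal{P}(T+1)$ under the translation. Since the pairing above equates not just the marginal but the joint law of the sequence $\{\iota_j\}_{j\ge 0}$ for the two values of $u_{i-1}$ (the past encoders are unchanged and the future ones are controls-forgetting by hypothesis), budget-feasibility events have equal probability under $\widetilde{\mathcal{E}}$ and under $\mathcal{E}$, so the $+\infty$-penalty regions also match. A secondary subtlety, compared to the quantizer case, is that the compander output is continuous rather than discrete; but because the translation is applied on the domain of $\mathcal{E}$ and not its codomain, both the per-transmission penalty $\phi(\iota)$ and the hard clip $\overline{\iota}$ are left untouched, so no care is needed about how $\phi$ behaves on its argument. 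Once these points are verified, the remaining development--invoking Lemma~\ref{theorem:CElawsAreOptimal} and iterating in the same dynamic-programming induction as in Theorem~\ref{theorem:CElawsForCFencoders}--is routine.
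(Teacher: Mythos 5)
Your proposal is correct and follows essentially the same route as the paper, which itself only sketches this proof by pointing back to the two-step argument of Lemmas~\ref{lemmaOnQuantizerDistortion} and~\ref{theorem:controlsForgettingEncoderIsOptimal}: reduce the encoder's objective to a weighted sum of current and future distortions plus communication costs via the quadratic cost-to-go, then use the translation symmetry~(\ref{translationSymmetryAtAnyTime}) of the conditional density to show the minimized distortion is independent of $u_{i-1}$. Your added care about the amplitude clip, the per-use power penalty, and the global budget under the domain-side translation fills in details the paper leaves implicit, but does not change the argument.
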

\begin{proof}
As with proving Lemmas 3,4 we carry out two steps. First we show that because the
cost-to-go is quadratic, the quantizer's objective at time $i$ is to minimize a
sum $\Gamma_i$ of current and future estimation distortions. Second we show that
the minimum of this sum distortion is independent of the control $u_{i-1}$. Thus
the optimal encoder becomes controls-forgetting from time $i-1$.
\end{proof}
\noindent
The main result for communication over a noisy linear channel is
presented below.
\begin{theorem}[Optimality of separation and certainty equivalence for
additive noise channel]
\label{theorem:CElawsForCFcompander}
For Design problem~\ref{fullyDynamicControls}, with the additive noise
channel, the performance
cost~\eqref{objectiveFunction} with communication
cost~\eqref{communicationCostForadditiveNoiseChannel} is
minimized by applying the linear control laws
\begin{align*}
  u_t^*
&
  =
  -      k^*_t \,
         {\widehat{x}}_{\left.t\right| t}
\end{align*}
in combination with the following compander which is controls-forgetting from
time~$0$:
\begin{align*}
    \epsilon_t^*
        \left( \, \zeta_t \,  ;
               \left\{ z_i \right\}_0^{t-1},
               \left\{ \epsilon_i  \left( \cdot \right) \right\}_0^{t-1}
        \right)
&
 =
 {\underset{ \epsilon \left( \cdot \right)
           }
           { \arg\inf
           }
 }
  \
 \Gamma_{i}
  \Bigl(  \epsilon \left( \cdot \right)
                \,  ;  \,
               \left\{ z_i \right\}_0^{t-1},
               \left\{ \epsilon_i  \left( \cdot \right)  \right\}_0^{t-1}
  \Bigr),
\end{align*}
where,
$
    k^*_i = a \frac{\beta_{i+1}}{q + \beta_{i+1}},
    \beta_i = p +   {\frac{a^2 q \beta_{i+1}}{ q + \beta_{i+1} }} ,
    \beta_{T+1} = 1, \ \text{and} \
    \lambda_i = {\frac{a^2 \beta_{i+1}^2 }{ q + \beta_{i+1} }}
$ and where,
\begin{align*}
  \Gamma_T
&
  =
 \begin{cases}
  + \infty ,
  &
  \rm{if}
  \ \sum_{i=0}^{T} \varphi \left( \eta_i \right) > {\mathcal{P}}
                    \cdot \left( T + 1 \right) ,
  \\
  {\mathbb{E}} \left[
        { \bigl( \zeta_{T} - \widehat{\zeta}_{\left.T\right| T}
          \bigr)}^2
         + m \cdot
             \varphi \left( \eta_{{}_T} \right)
    \left|
                 \epsilon_T \left( \cdot\right) , \,
               \left\{ z_i, \epsilon_i  \left( \cdot \right) \, \right\}_0^{T-1}
   \right.
                            \right]
  ,
  & \rm{otherwise,}
 \end{cases}
\\
  \Gamma_t
&
  =
  \lambda_t \
  {\mathbb{E}} \left[
        { \bigl( \zeta_{t} - \widehat{\zeta}_{\left.t\right| t}
          \bigr)}^2
         + m \cdot
             \varphi \left( \eta_t \right)
    \left|
                 \epsilon_t \left( \cdot\right) ,
                 {\mathscr{D}}_{ { \left( t-1\right)  }^{+} }^{\rm{con}}
   \right.
                            \right]
  +
  {\mathbb{E}} \left[
        \Gamma_{t+1}^* \left(
               \left\{ z_i , \epsilon_i  \left( \cdot \right)  \, \right\}_0^{t}
                      \right)
              \right],
\\
 \Gamma_t^*
&
 =
 {\underset{ \epsilon \left( \cdot \right)
           }
           { \inf
           }
 }
  \
 \Gamma_t \left( \epsilon  \right) .
\end{align*}
Moreover, this control law is a certainty equivalence law.
\end{theorem}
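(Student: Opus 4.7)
The plan is to mirror the proofs of Theorems~\ref{theorem:CElawsForCFencoders} and~\ref{theorem:CElawsForCFencodersVariableQuantizer}. Starting from Lemma~\ref{lemma:optimalUT} as the seed, I would run backward induction through the horizon, alternating invocations of Lemma~\ref{lemmaCFcompanderForAffineControlsANchannel} (at step $t=i$, the optimal compander is controls-forgetting from time~$i-1$) and Lemma~\ref{theorem:CElawsAreOptimal} (for a controls-forgetting encoder, the optimal control at time~$i$ is linear in $\widehat{x}_{i|i}$). The separated structure of the theorem then drops out just as in the proofs of the analogous results for the fixed and variable-rate discrete-alphabet channels, with the same Riccati-style recursions generating $\beta_i$, $k_i^*$, and $\lambda_i$, and with $\Gamma_t$ now also absorbing the expected power cost $m\cdot\varphi(\iota_t)$.

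The main substantive step is to establish Lemma~\ref{lemmaCFcompanderForAffineControlsANchannel}, which rests on a translation-symmetry argument in the style of the proof of Lemma~\ref{lemmaOnQuantizerDistortion}. What needs checking is that, as $u_{i-1}$ varies, the set of achievable compander performances at times $j\ge i$ is unchanged. The additive channel noise $\chi_t$ is independent of the state and of all past controls, so the channel acts by convolving the conditional density of the channel input $\iota_t$ with the noise density $\rho_\chi$, and the usual relation $\rho_{i|i-1}(\cdot) = \rho_{aA x_{i-1}|z_0^{i-1}}(\cdot - B u_{i-1}) \circledast \rho_{w}(\cdot)$ carries through. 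Given any admissible compander $\epsilon(\cdot)$ designed for $u_{i-1}=u$, translating all its level boundaries by $\widetilde u - u$ produces a compander $\widetilde\epsilon(\cdot)$ whose conditional output distribution at $u_{i-1}=\widetilde u$ exactly matches that of $\epsilon$ at $u_{i-1}=u$; because the encoder has causal access to past controls via feedback of channel outputs, $\widetilde\epsilon$ remains admissible and incurs the same per-transmission power $\varphi(\iota_t)$ as $\epsilon$, so the hard amplitude limit $|\iota_t|\le\overline\iota$ and the budget $\sum_j \varphi(\iota_j)\le \mathcal{P}(T+1)$ are both unaffected by the shift.

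I expect the hardest point to be the technical check of the translation-symmetry identity when the relevant densities carry atoms, for instance when the compander uses a piecewise-constant map of the saturating type described after Example~\ref{example:dualEffectInnovationEncoding} that sends a positive-probability set to a single channel-input level; this can be handled by the same delicate argument sketched after the multivariable analogue of Lemma~\ref{lemmaOnQuantizerDistortion}. Once Lemma~\ref{lemmaCFcompanderForAffineControlsANchannel} is in place, the identification of the optimal $u_t^*$ as a certainty-equivalence control law follows directly from Lemma~\ref{lemma:affineCElaws}, completing the proof.
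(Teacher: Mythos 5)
Your proposal follows the paper's own proof exactly: seed the backward induction with Lemma~\ref{lemma:optimalUT}, then alternate Lemma~\ref{lemmaCFcompanderForAffineControlsANchannel} and Lemma~\ref{theorem:CElawsAreOptimal}, and invoke Lemma~\ref{lemma:affineCElaws} to identify the resulting linear laws as certainty-equivalence controls. The extra detail you supply on the translation-symmetry argument behind Lemma~\ref{lemmaCFcompanderForAffineControlsANchannel} (including admissibility of the shifted compander and invariance of the power cost) is consistent with, and somewhat more explicit than, the paper's two-step sketch of that lemma.
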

\begin{proof}
Starting with the result of Lemma~\ref{lemma:optimalUT} as a seed,
repeatedly apply in sequence
Lemmas~\ref{lemmaCFcompanderForAffineControlsANchannel},
\ref{theorem:CElawsAreOptimal}. This proves optimality of the above
combination.
Lemma~\ref{lemma:affineCElaws} implies that the controls laws of
\eqref{optimalControlLaw} are indeed certainty equivalence control
laws as per van der Water and Willems~\cite{vanDerWaterWillems1981}.
\end{proof}
%

 We might also add that for all of the above channel models, the
 results for Design problem~\ref{fullyDynamicControls} can also be extended to
 the case of vector valued states with only partial, noisy
 linear observations available at the sensor (encoder).
 Such a situation is no more complicated than that one where
 the encoder observes the state perfectly. In the partially
 observed case, the role of the `state' falls on the estimate
 produced by the encoder's Kalman filter.
\section{Constrained encoder-controller design} \label{Examples}
We now use our understanding of the dynamic encoder-controller design problem~(Design problem~\ref{fullyDynamicControls}) to examine the constrained encoder-controller design problem~(Design problem~\ref{RestrictedControlsEncoders}) and the hold-waveform-controller and encoder design problem~(Design problem~\ref{ZOHcontrols}). In this section, we show that, in general, separation in design of encoder and controller is not optimal for these design problems. We do this by presenting a counterexample for each of these design problems. Some of these counterexamples illustrate that the distortion term in the cost-to-go lacks symmetry w.r.t.~translations~(\ref{translationSymmetryAtAnyTime}). Recall that this property was instrumental in ensuring separation in the dynamic encoder-controller design problem (see proof of Lemma~\ref{lemmaOnQuantizerDistortion}).

Thus, we begin with Example~\ref{Ex:CertaintyEquivalence_EncDes}, which
illustrates,
through explicit calculations, that symmetry w.r.t.~translations does indeed occur in the dynamic encoder-controller design problem. Next, we impose a set of constraints on the decision makers of the closed-loop system in Examples~\ref{Ex:ConstrainedEncoder_NoSep}-\ref{Ex:ConstrainedIntervalControl_NoSep}, which have the effect of removing the symmetry w.r.t.~translations. For these cases, we show that separation in design is no longer optimal. In Example~\ref{example:ZOHetSamplingOneSample}, we illustrate that separation is not optimal when the control signals are held constant over random epochs.

\subsection{Symmetry~w.r.t.~translations leads to separation}

We present a simple example of a dynamic encoder-controller design problem; the encoder is specified in a parametric form, but the choice of the parameters can be dynamic, with no restrictions on the set of parameters. We show that the optimal controller uses the certainty equivalence law.
\begin{example} \label{Ex:CertaintyEquivalence_EncDes}
For the linear plant~(\ref{plant}), with initial state $x_0$ given by a
zero mean Gaussian with variance $\sigma_x^2$, and process noise $w_k$
given by a zero mean Gaussian with finite variance $\sigma^2_w$, let the
horizon length be $T=2$. Let the cost coefficients~$p$ and $q$ remain
unspecified. Let the channel alphabet be the discrete set~$\left\{1,2\right\}$. The controller receives a quantized version of the state, denoted $z_k$ and given by
\begin{equation*}
z_k = \begin{cases}
1 & \textrm{if} \; x_k \le \delta_k \; , \\
2 & \textrm{otherwise} \; .
\end{cases}
\end{equation*}
The quantizer thresholds $\delta_0$ and $\delta_1$ are to be chosen along with the control signals $u_0$ and $u_1$, to jointly minimize the two-step horizon control cost.

We use dynamic programming to find the optimal values for $u_1$, $\delta_1$ and $u_0$, and $\delta_0$, in the specified order. From Lemma~\ref{lemma:optimalUT}, we know that $u_1^*$ is given by the certainty equivalence law as $-\frac{a}{q+1} \widehat{x}_{1|1}$, where the MMSE estimate of $x_1$ is given by $\widehat{x}_{1|1} = \mathbb{E} \left[x_1 \big| \left\{ z_i \right\}_0^1 \right]$.

Then, let us consider the cost-to-go at the previous time step,
\begin{align}
V_0 = \min_{u_0,\delta_1} \mathbb{E} \bigg[ a^2(p+a^2) x_0^2 &+ (q+p+a^2)u_0^2 + 2a(p+a^2)x_0u_0 - \frac{a^2}{q+1}\widehat{x}^2_{1|1} \; \bigg| \; z_0 \bigg] + \kappa \; , \label{Eq:CostToGoV0}
\end{align}
where $\kappa = (1+p+a^2)\sigma_w^2$. The above cost-to-go is to be minimized by selecting a suitable $u_0$ and $\delta_1$ simultaneously. To do this, we first need to find an expression for $\mathbb{E} \left[ \widehat{x}_{1|1}^2 {}\big|{} z_0 \right]$. The encoder outputs at times $0,1$ tell us the quantization cells in which $x_0$ and $x_1$ lie. We use this information to find an expression for the estimate $\widehat{x}_{1|1}$, as shown in Appendix~\ref{App:ExplicitCalc}, and rewrite the cost-to-go as
\begin{equation} \label{Eq:CostToGoV0_Rewritten}
\begin{aligned}
V_0 = \min_{u_0,\delta_1} \mathbb{E} &\left[ a^2(p+a^2) x_0^2 + \overbrace{(q+p+a^2\frac{q}{q+1})u_0^2 + 2a(p+a^2\frac{q}{q+1})x_0u_0}^{\textrm{function of } u_0} \bigg| z_0 \right] \\
&\quad \quad \quad \quad \quad \quad \; - \underbrace{\frac{a^2}{q+1} \frac{\sum_{j=1}^N \vartheta^2\left( \frac{\varpi_{j-1}-u_0}{\sigma_2},\frac{\varpi_{j}-u_0}{\sigma_2} \right)}{\mathbb{P}\left(x_0 \in \left(\theta_{i-1},\theta_i \right) \right)}}_{\triangleq \Gamma_1: \quad \textrm{function of } u_0 \textrm{ and } \mathcal{E}_1} + (1+p+a^2)\sigma_w^2 \; ,
\end{aligned}
\end{equation}
where $\sigma_2^2 = \sigma_w^2 + a^2 \sigma_x^2$. The term $\vartheta(\underline{r},\bar{r})$ in the above equation is given by
\begin{equation} \label{Eq:lambda}
\begin{aligned}
\vartheta(\underline{r},\bar{r}) = \bigg[ &-a\sigma_x g\left(\frac{\theta_i}{\sigma_x} \right) G\left(r\frac{\sigma_2}{\sigma_w} - \theta_i\frac{a}{\sigma_w} \right) - \sigma_2 g(r)G\left(\frac{\theta_i}{\sigma_1} - r\frac{a\sigma_x}{\sigma_w}\right) \\
&+ a\sigma_x g\left(\frac{\theta_{i-1}}{\sigma_x}\right)G\left(r\frac{\sigma_2}{\sigma_w} - \theta_{i-1}\frac{a}{\sigma_w}\right) + \sigma_2 g(r)G\left(\frac{\theta_{i-1}}{\sigma_1} - r\frac{a\sigma_x}{\sigma_w}\right) \bigg]_{r=\underline{r}}^{\bar{r}} \; ,
\end{aligned}
\end{equation}
where $\sigma_1^2 = \sigma_x^2 \sigma_w^2 / \sigma_2^2$ and $g(\cdot)$ and $G(\cdot)$ are the probability distribution function and cumulative distribution function, respectively, of the standard normal distribution. The quantization cells for $x_0$ and $x_1$ are denoted by $(\theta_{i-1},\theta_i)$ and $(\varpi_{j-1},\varpi_{j})$ corresponding to the encoder outputs $z_0 = i$ and $z_1 = j$, respectively.

The quantization distortion term $\Gamma_1$ in~\eqref{Eq:CostToGoV0_Rewritten} possesses symmetry~w.r.t.~translations, as defined in~(\ref{translationSymmetryAtAnyTime}). Thus, for any value of the control signal $u_0$, the minimum value is given by $\Gamma_1^*(\mathcal{E}_1)$, a term that depends only on the encoder.
Then, the cost-to-go with respect to the control signal $u_0$ comprises of only the terms in the first row in (\ref{Eq:CostToGoV0_Rewritten}). Hence, we obtain separation. Furthermore, the optimal control signal is given by the certainty equivalence law, $u_0^{\textrm{CE}} = -\frac{a(p+a^2\frac{q}{q+1})}{p+q+a^2\frac{q}{q+1}} \widehat{x}_{0|0}$. Thus, the certainty equivalence property holds for this setup.
\end{example}

\begin{figure}[tb]
\begin{center}
\includegraphics*[scale=0.435,viewport=20 0 1050 400]{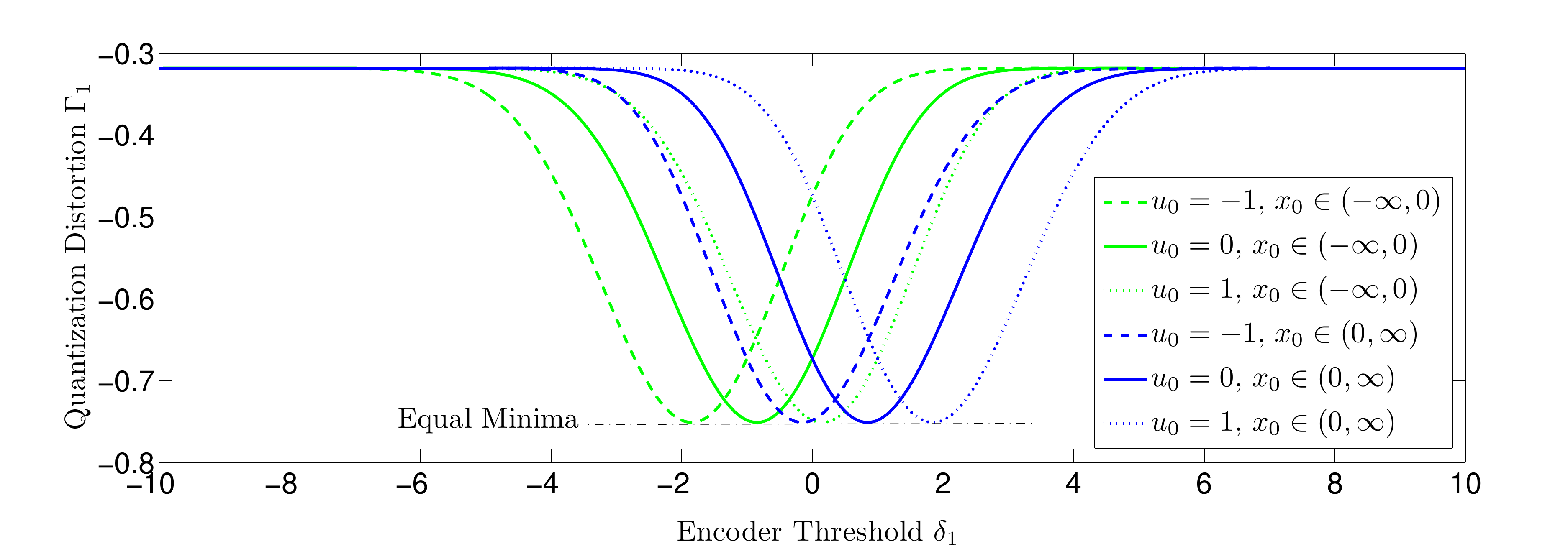}
\caption{This plot illustrates the \emph{symmetry w.r.t.~translations} of the quantization distortion term $\Gamma_1$ in (\ref{Eq:CostToGoV0_Rewritten}). Different values of $u_0$ result in the same minimum value for $\Gamma_1$ at different values of $\delta_1$, thus resulting in separation and certainty equivalence in Example~\ref{Ex:CertaintyEquivalence_EncDes}. }
\label{Fig:TranslationSymmetry}
\end{center}
\vspace{-5mm}
\end{figure}
We illustrate symmetry~w.r.t.~translations in
Figure~\ref{Fig:TranslationSymmetry}. For the choice of parameters
$a=1$, $p = 1$ and $q = 1$, we evaluate the quantization distortion term
$\Gamma_1$ from the above example and show that the minimum that this
function attains over the range of the quantizer threshold $\delta_1$ is
invariant for different values of $u_0$. To evaluate the cost-to-go, we
make an arbitrary choice: $\delta_0 = 0$, for the quantizer threshold at
time $k=0$, and we compute the estimates and probabilities using this choice.

\subsection{Optimal constrained encoder}
We now impose a restriction on the choice of encoder parameters. The one-bit quantizer that we consider in the previous example selects two semi-infinite intervals as the quantizer cells, $\Delta_1 = (-\infty,\delta_k]$ and $\Delta_2 = (\delta_k,\infty)$. We restrict the choice of the quantizer threshold to a constraint set, such that $\delta_k \in \Theta$. In the following example, we see that separation is lost for this constrained optimization problem.

\begin{example} \label{Ex:ConstrainedEncoder_NoSep}
Consider the same setup as in Example~\ref{Ex:CertaintyEquivalence_EncDes}, with the restriction that the quantizer threshold be chosen from the set $\Theta = (-1,1)$. The quantizer thresholds $\delta_0 \in \Theta$ and $\delta_1 \in \Theta$ are to be chosen along with the control signals $u_0$ and $u_1$, to jointly minimize the two-step horizon control cost.

We follow the same procedure as before. The optimal control signal $u_1$ is given by the certainty equivalence law as $u_1^* = u_1^{\textrm{CE}}$. This gives us the same cost-to-go $V_0$ from (\ref{Eq:CostToGoV0}). Evaluating $\Gamma_1$ for the parameters $a=1$, $p = 1$ and $q = 1$, we plot it over a range of quantizer thresholds $\delta_1 \in \Theta$, for three arbitrary choices of $u_0$, in Figure~\ref{Fig:RestrEnc_NoSep}. By restricting the range of quantizer thresholds to $\Theta$, we do not permit all the curves to reach their minima from~Figure~\ref{Fig:TranslationSymmetry}. In particular, the minima for $u_0 = -1$, when $x_0 \in (-\infty,0)$, and $u_0 = 1$, when $x_0 \in (0,\infty)$ are higher than before. Thus, the minimum value of $\Gamma_1$ obtained over the range of $\delta_1$ now varies depending on the choice of $u_0$. Consequently, there is no longer a symmetry w.r.t.~translations, and separation cannot be achieved using the proof of Theorem~\ref{theorem:controlsForgettingEncoderIsOptimal}. Furthermore, the optimal control signal $u_0^*$ must be chosen along with $\delta_1^*$ to optimize the entire cost-to-go including the term $\Gamma_1$. Thus, $u_0^*$ does not just minimize a quadratic expression in this problem, and cannot be chosen independently of the encoding policy. Hence, separation in design of the controller and encoder is no longer optimal.
\end{example}
\begin{figure}[tb]
\begin{center}
\includegraphics*[scale=0.415,viewport=20 0 1075 400]{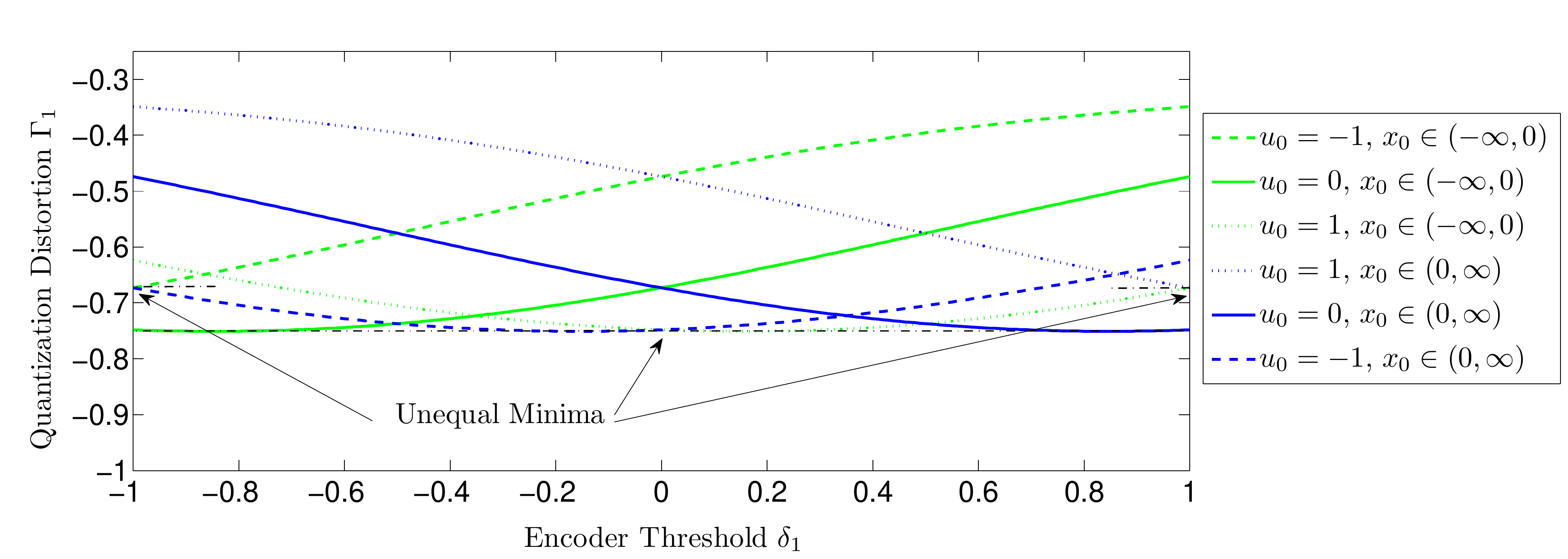}
\caption{This plot illustrates the lack of \emph{symmetry w.r.t.~translations} of $\Gamma_1$, when the quantizer thresholds are restricted to be chosen from an interval, such as in Example~\ref{Ex:ConstrainedEncoder_NoSep}. Different values of $u_0$ do not result in the same minimum value for $\Gamma_1$ over the range of $\delta_1$, thus resulting in a lack of separation and certainty equivalence. }
\label{Fig:RestrEnc_NoSep}
\end{center}
\vspace{-5mm}
\end{figure}

\subsection{Optimal constrained controller}
We now remove the restriction on the encoder parameters, and instead impose the following restriction on the controller: the controls are required to have limited range. Specifically, the control values at ever time step must come from a specified constraint set $\mathcal{U}$. We present two versions of this constraint: in case~$1$, our constrained control set $\mathcal{U}$ is discrete, and in case~$2$, the constrained control set is an interval $\mathcal{U} = \left(u_{\min},u_{max}\right)$.

\begin{example} \label{Ex:ConstrainedControl_NoSep}
Consider the same setup as in Example~\ref{Ex:CertaintyEquivalence_EncDes}, with the restriction that the control signal be chosen from a discrete set $\mathcal{U} = \{-1,0,1\}$. The quantizer thresholds $\delta_0$ and $\delta_1$ are to be chosen along with the control signals $u_0 \in \mathcal{U}$ and $u_1 \in \mathcal{U}$, to jointly minimize the two-step horizon control cost.

The unconstrained minimizer for the cost-to-go at the terminal time is given by the certainty equivalent value $u_1^{\textrm{CE}}$. The best we can do, given the constraint set $\mathcal{U}$, is to choose the control value from the discrete set $\mathcal{U}$ that results in the lowest cost-to-go. Using this principle, we find the optimal control signal $u_1^*$ to be
\begin{equation*}
u_1^* = \begin{cases}
-1 & \widehat{x}_{1|1} \ge \frac{q+1}{2a} \; , \\
0 & \frac{q+1}{2a} \ge \widehat{x}_{1|1} \ge -\frac{q+1}{2a} \; ,\\
1 & \widehat{x}_{1|1} \le -\frac{q+1}{2a} \; .
\end{cases}
\end{equation*}
The optimality regions are identified by comparing $\min_{u_1 \in \mathcal{U}} V_1(u_1)$ evaluated at each permissible value of $u_1$, and determining the switching points.

The cost-to-go $V_0$, obtained by averaging over the three different cost-to-go functions obtained at time $k=1$, is given by
\begin{align*}
V_0 = \min_{u_0,\delta_1} \mathbb{E} \bigg[ a^2(p+a^2) x_0^2 &+ (q+p+a^2)u_0^2 + 2a(p+a^2)x_0u_0 + (-2a\widehat{x}_{1|1} + q + 1) \mathds{1}_{\left\{ \widehat{x}_{1|1} \ge \frac{q+1}{2a} \right\}} \\
&+ (2a\widehat{x}_{1|1} + q + 1) \mathds{1}_{\left\{ \widehat{x}_{1|1} \le -\frac{q+1}{2a} \right\}} \bigg| z_0 \bigg] + (1+p+a^2)\sigma_w^2 \; .
\end{align*}

We denote the terms in the above cost-to-go that directly depend on the choice of the encoder threshold $\delta_1$ as $\Gamma^{\textrm{RC}}_1$. Using the expression for $\widehat{x}_{1|1}$ and the posterior density for $x_1$ from Appendix~\ref{App:ExplicitCalc}, we compute $\Gamma^{\textrm{RC}}_1$ as
\begin{align*}
\Gamma^{\textrm{RC}}_1 &=  \mathbb{E} \bigg[(-2a\widehat{x}_{1|1} + q + 1) \mathds{1}_{\left\{ \widehat{x}_{1|1} \ge \frac{q+1}{2a} \right\}} + (2a\widehat{x}_{1|1} + q + 1) \mathds{1}_{\left\{ \widehat{x}_{1|1} \le -\frac{q+1}{2a} \right\}} \bigg| z_0 \bigg] \\
&= \sum_{j=1}^N \frac{\mathbb{P}\left(x_0 \in \left(\theta_{i-1},\theta_i \right), x_1 \in \left(\varpi_{j-1},\varpi_{j} \right) \right)}{\mathbb{P}\left(x_0 \in \left(\theta_{i-1},\theta_i \right) \right)} \bigg( (-2a\widehat{x}_{1|1} + q + 1) \mathds{1}_{\left\{ \widehat{x}_{1|1} \ge \frac{q+1}{2a} \right\}} \\
&\quad \quad \quad \quad \quad \quad \quad \quad \quad \quad \quad \quad \quad \quad \quad \quad \quad \quad + (2a\widehat{x}_{1|1} + q + 1) \mathds{1}_{\left\{ \widehat{x}_{1|1} \le -\frac{q+1}{2a} \right\}} \bigg) \; .
\end{align*}

Evaluating the above expression for parameters $a=1$, $p = 1$ and $q = 1$, and some arbitrary choice of quantizer threshold $\delta_0$, we plot $\Gamma^{\textrm{RC}}_1$ over a range of quantizer thresholds $\delta_1$, for different choices of $u_0$ from the set $\mathcal{U}$, in Figure~\ref{Fig:RestrCtrl_NoSep}. Notice that the minimum values of $\Gamma^{\textrm{RC}}_1$ obtained over the range of $\delta_1$ vary depending on the choice of $u_0$. In other words, there is no symmetry w.r.t.~translations. Consequently, a separation in design of the controller and encoder is no longer optimal.
\end{example}
\begin{figure}[tb]
\begin{center}
\includegraphics*[scale=0.45,viewport=20 0 1025 400]{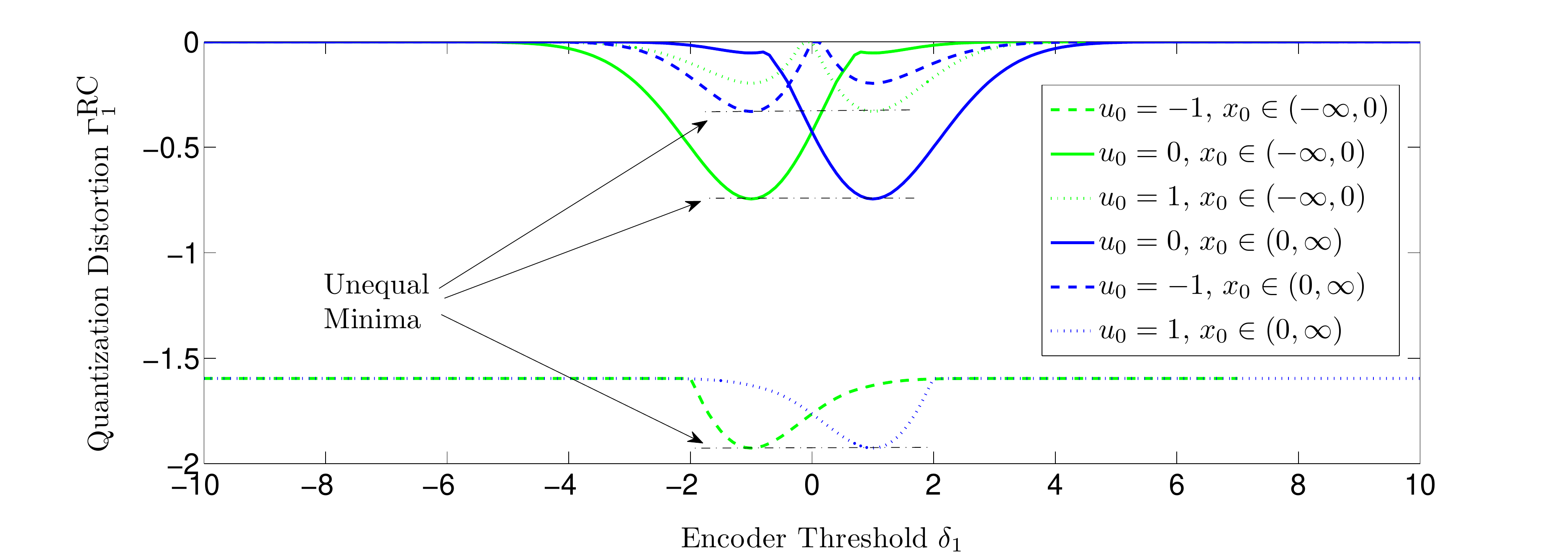}
\caption{This plot illustrates the lack of \emph{symmetry w.r.t.~translations} for $\Gamma^{\textrm{RC}}_1$, when the controls are restricted to be chosen from a discrete set $\mathcal{U}$, such as in Example~\ref{Ex:ConstrainedControl_NoSep}. Different values of $u_0$ do not result in the same minimum value for $\Gamma^{\textrm{RC}}_1$ over the range of $\delta_1$, thus resulting in the lack of separation and certainty equivalence. }
\label{Fig:RestrCtrl_NoSep}
\end{center}
\vspace{-5mm}
\end{figure}

We now present a slight variation in the restriction on the controller, and reconfirm that separation in design of controller and encoder is not optimal.
\begin{example} \label{Ex:ConstrainedIntervalControl_NoSep}
Consider the same setup as in Example~\ref{Ex:CertaintyEquivalence_EncDes}, with the restriction that the control signal be chosen from an interval $\mathcal{U} = (u_{\min},u_{\max})$. The quantizer thresholds $\delta_0$ and $\delta_1$ are to be chosen along with the control signals $u_0 \in \mathcal{U}$ and $u_1 \in \mathcal{U}$, to jointly minimize the two-step horizon control cost.

As in the solution to the previous example, note that the unconstrained minimizer for the cost-to-go $V_1$ is the certainty equivalent value $u_1^{\textrm{CE}}$. The best we can do, given the constraint set $\mathcal{U}$, is to choose the control signal closest to the unconstrained value. This follows from the convexity of the quadratic cost-to-go. Using this principle, we find the optimal control signal $u_1^*$ to be
\begin{equation*}
u_1^* = \begin{cases}
u_{\min} & u_1^{\textrm{CE}} \le u_{\min} \; , \\
u_1^{\textrm{CE}} & u_{\min} \le u_1^{\textrm{CE}} \le u_{\max} \; , \\
u_{\max} & u_1^{\textrm{CE}} \ge u_{\max} \; .
\end{cases}
\end{equation*}
Evaluating the cost-to-go $V_1$ using $u_1^*$, and reusing quantities derived in Appendix~\ref{App:ExplicitCalc}, we can write up the cost-to-go $V_0$ as before. More interesting to us are the terms in this expression that directly depend on the choice of the quantizer threshold $\delta_1$, as given by
\begin{align*}
\Gamma^{\textrm{IC}}_1 = \mathbb{E} \bigg[(2a\widehat{x}_{1|1}u_{\min} + (q + 1)u_{\min}^2) \mathds{1}_{\left\{ \widehat{x}_{1|1} \ge -\frac{q+1}{a}u_{\min} \right\}} - \frac{a^2}{q+1} \widehat{x}_{1|1}^2 &\mathds{1}_{\left\{ -\frac{q+1}{a}u_{\max} \le \widehat{x}_{1|1} \le -\frac{q+1}{a}u_{\min} \right\}} \\
+ (2a\widehat{x}_{1|1}u_{\max} + (q + 1)u_{\max}^2) &\mathds{1}_{\left\{ \widehat{x}_{1|1} \le -\frac{q+1}{a}u_{\max} \right\}} \bigg| z_0 \bigg] \; .
\end{align*}

Evaluating this expression for parameters $a=1$, $p = 1$, $q = 1$, $u_{\min} = -2$ and $u_{\max} = 2$, and some arbitrary choice of quantizer threshold $\delta_0$, we plot $\Gamma^{\textrm{IC}}_1$ over a range of quantizer thresholds $\delta_1$, for different choices of $u_0$ from the set $\mathcal{U}$, in Figure~\ref{Fig:RestrIntCtrl_NoSep}. Notice that the minimum value of $\Gamma^{\textrm{IC}}_1$ obtained over the range of $\delta_1$ varies depending on the choice of $u_0$. Thus, there is no  symmetry w.r.t.~translations, and a separation in design is no longer optimal.
\end{example}

\begin{figure}[tb]
\begin{center}
\includegraphics*[scale=0.45,viewport=20 50 1500
550]{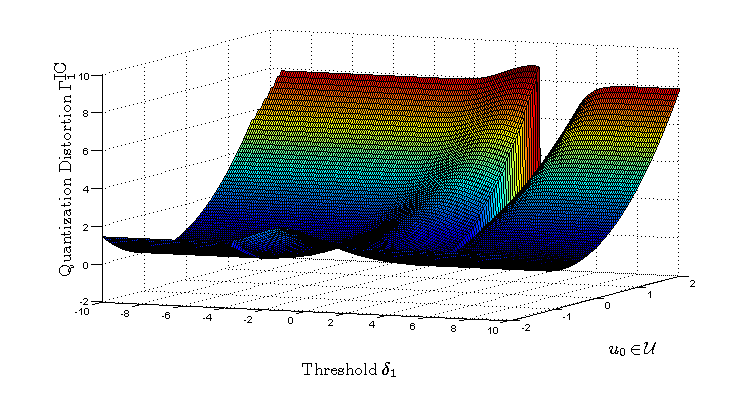}
\caption{This plot illustrates the lack of \emph{symmetry w.r.t.~translations} of $\Gamma^{\textrm{IC}}_1$, when the controls are restricted to be chosen from an interval, such as in Example~\ref{Ex:ConstrainedIntervalControl_NoSep}. Different values of $u_0$ do not result in the same minimum value for $\Gamma^{\textrm{IC}}_1$ over the range of $\delta_1$, thus resulting in lack of separation and certainty equivalence. }
\label{Fig:RestrIntCtrl_NoSep}
\end{center}
\vspace{-5mm}
\end{figure}

In both the above examples, the constrained set $\mathcal{U}$ did not contain the certainty equivalent values of the control signal $u_1$ for at least some values of $\delta_1$. The resulting cost-to-go $V_0$ was altered, such that the symmetry w.r.t.~translations was lost. Consequently, separation no longer holds. The restriction removed the certainty equivalence property during time step $k=1$, but the resulting cost and the information pattern resulted in the lack of separation itself at time step $k=0$. A similar problem setup has been explored in \cite{bernhardsson1999dualControlWithOnlyTwoGains}, where the control gain is restricted to be chosen from two given values. The dual effect has been shown for this problem setup as well.

\subsection{Zero order hold and event-triggered sampling}
We study numerically two cases of control under event-triggered sampling.
Basically these are problems with a sampling budget of exactly one.
For the controller, we must design a whole waveform to be applied up to the
time when the first sample is received. We are already given the control law to be
applied from this random sampling time to the end time. For the encoder, we must
design an envelope to generate exactly one sample between time~$t=1$ and~$t=T$.

We study two examples, and in both of them, the encoder is allowed to be
dynamic. In the first example, the control waveform up to the first sample time is
pre-assigned, and it has a particular linear dependence on the Kalman predictor.
In the second example, the control waveform up to the first sample time must be a
zero order hold waveform.
\begin{example}[Fixed linear control law up to an event-triggered sample]
For the scalar linear plant~(\ref{plant}), let the coefficient~$a=1$,
and let the initial state~$x_0=2$, and  $\sigma_0=0$,
and let this information be known to the encoder and the controller.
This simply means that~$z_0=x_0.$ This information is prestored at the controller.
Let the variance $\sigma_w^2=0.5^2$.
Let the horizon end~$T=4$, and let $p = 1, q= 0.2$.
The control law is fixed to be:
\begin{align*}
     u_t
&
     =
       \begin{cases}
         k_t^* \, {\mathbb{E}} \left[ x_t \left|
                           x_0, \{u_i \}_0^{t-1} \right.
                         \right],
        & \text{for} \ 0 \leq t \leq \tau - 1,
        \\
         k_t^* \, {\mathbb{E}} \left[ x_t \left| x_{\tau},
              \{u_i \}_{\tau}^{t-1} \right. \right],
        & \text{for} \ \tau \leq t \leq T,
                \end{cases}
\end{align*}
where the gains~$k_t^*$ are the ones from the certainty equivalence
law~(\ref{optimalControlLaw}),
and~$\tau$ satisfies $1 \leq \tau \leq T$ and is the first and
only sample time, which is chosen by encoder.
Choose a policy (sampling envelope) which comprises silence sets
 $\left\{  {\mathcal{S}}_1 , \ldots , {\mathcal{S}}_T \right\}$
giving:
\begin{gather*}
\tau = \min \Bigl\{
               T, \,
               \min_{t \geq 1} \{t: x_t \notin {\mathcal{S}}_t\}
            \Bigr\}.
\end{gather*}
\label{example:etSamplingOneSample}
\end{example}
Next we consider an example of a design problem with a zero order hold control. Here we specialize
to the case where the control's hold epochs are forced to be exactly the
inter-sample intervals.
\begin{figure}
\centering
\subfloat[Optimal sampling envelope for
Example~\ref{example:etSamplingOneSample}]
{%
\includegraphics[width=0.45\textwidth]{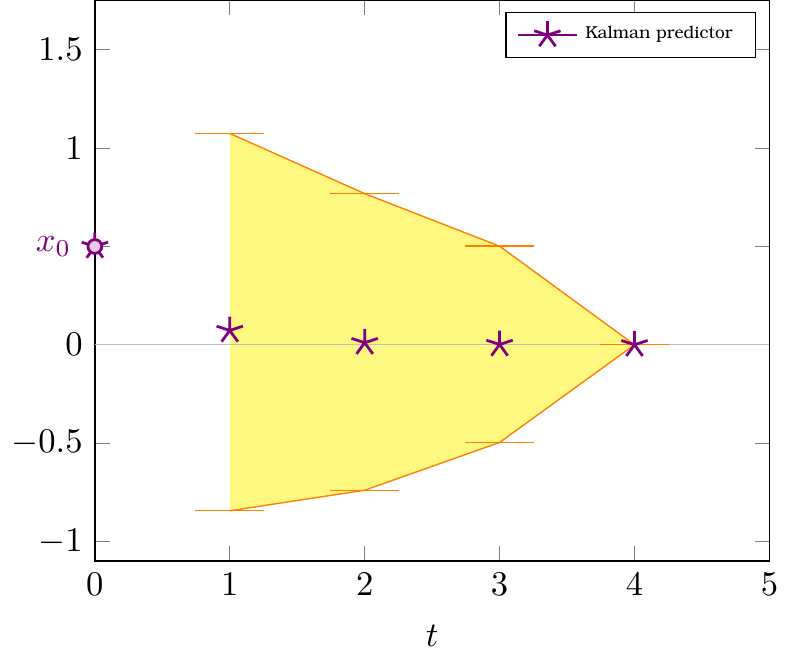}
\label{ZOH:withBestDeterministicControl}}
\subfloat[Optimal sampling envelope for
Example~\ref{example:ZOHetSamplingOneSample}]
{%
\includegraphics[width=0.45\textwidth]{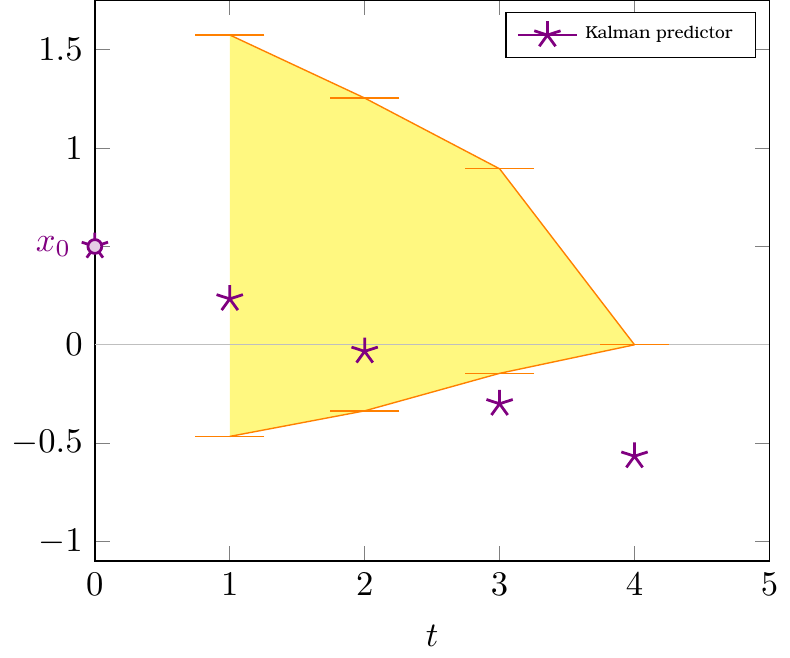}
\label{fig:withBestZOH}}
\caption{
Event-triggered sampling with exactly one sample after time~$t=0$
}
\label{fig:ZOHeventTriggered}
\end{figure}
\begin{example}[zero order hold control up to an event-triggered sample]
Consider the same setup as in Example~\ref{example:etSamplingOneSample}
but
there are exactly two epochs; and they must be precisely
$ \left\{ 0, 1, \ldots \tau-1 \right\}$ and $ \left\{ \tau, \ldots, T
\right\} $, where $\tau$ is the first and only sample time, and is chosen to occur
at or later than time~$t=1$. The control laws over the second epoch are fixed
to have the form:
$
    u_t
    =
    k_i^* \,
    {\mathbb{E}} \left[ x_{\tau} \left| x_{\tau_{0}} \right. \right],
            \ \text{for} \ \tau \leq t \leq T,
$
where the gains~$k_t^*$ are the ones from the certainty equivalence
law~\eqref{optimalControlLaw}.
Pick: (1) a control law for the first epoch having the zero-order hold form:
\begin{align*}
     u_t
&
     =
     {\mathcal{K}}^0 \left( x_0 \right),
     \ \text{for} \ 0 \leq t \leq {\tau-1},
\end{align*}
and (2) a sampling envelope which comprises silence sets
 $\left\{  {\mathcal{S}}_1 , \ldots , {\mathcal{S}}_T \right\}$
for generating the sample time:
\begin{gather*}
\tau = \min \Bigl\{
               T, \,
               \min_{t \geq 1} \{t: x_t \notin {\mathcal{S}}_t\}
            \Bigr\} .
\end{gather*}
\label{example:ZOHetSamplingOneSample}
\end{example}
The optimal sampling envelope of the zero order hold control
example~(Example~\ref{example:ZOHetSamplingOneSample}) is shown in
Figure~\ref{fig:withBestZOH}. This is pictorial evidence that the dual effect
is present in the loop. This becomes clear from the reasoning below.

Supposing the dual effect were absent, then the encoder's goal would have been to
pick the sample time $\tau$ to minimize a weighted sum of squared estimation
errors up to time $\tau - 1$.  The envelope optimal for that objective will be a
sequence of silence set symmetric about the means
${\mathbb{E}} \left[ x_t \left| x_0, \{u_i\}_0^{\tau - 1}\right. \right]$.
When the plant noise is Gaussian, Hajek and
others~\cite{hajekPagingITW,hajekPagingJournal,lipsaMartins2011,nayyarAndThreeProfessorsTransactions2013}
predict that a symmetric sequence of silence is optimal. They also imply that a sequence
of silence sets that are not symmetric about the respective means
 ${\mathbb{E}} \left[ x_t \left| x_0, \{u_i\}_0^{\tau - 1}\right. \right]$ will
lead to suboptimal state estimation.

Since the optimal envelope computed numerically is clearly non-symmetric about
the means ${\mathbb{E}} \left[ x_t \left| x_0, \{u_i\}_0^{\tau - 1}
\right. \right]$,
there must be a dual effect in the loop, which is exploited by this optimal pair
of sampler and zero order hold controller.

\section{Conclusions%
\label{discussionSection}
}
In this paper, we have seen through examples that the dual effect is present in the plant-encoder-channel combination. Hence in general, it is suboptimal to apply a controls-free encoder, or to apply an affine controller. It has long been known that for the design problem with a static encoder, separation is not optimal, and that the optimal control laws are nonlinear~\cite{curry1970book}. Recent interest in the dynamic design problem was due to Borkar and Mitter~\cite{borkarMitter1997} who describe advantages obtained by applying controls-forgetting encoders. Many papers state that the separated design is optimal for the dynamic design problem for the various channel models we have treated. We have shown by dynamic programming that these statements are indeed correct. This is an instance of the optimal decision policies `ignoring' the presence of the dual effect. But a separated design need not be optimal for other design problems. In particular, for event-triggered sampling the dynamic design problem has a separated design, but the zero order hold control design problem does not have a separated solution. This is at least partly surprising because, separated design is optimal for the classical LQG partially observed control with or without the zero order hold control restriction.

An interesting aspect of our results is that we have shown that
separation and certainty equivalence are optimal for Design problem $2$,
despite the dual effect being present in the networked control system of
Section~\ref{twoAgentProblemStatement}. To understand this result, we
now examine two implementations of the optimal encoder-controller pair for this design problem, and using these, we draw out some subtle points concerning dual effect and optimality of separation and certainty equivalence.
\begin{figure}  
\centering
\includegraphics[width=0.75\textwidth]{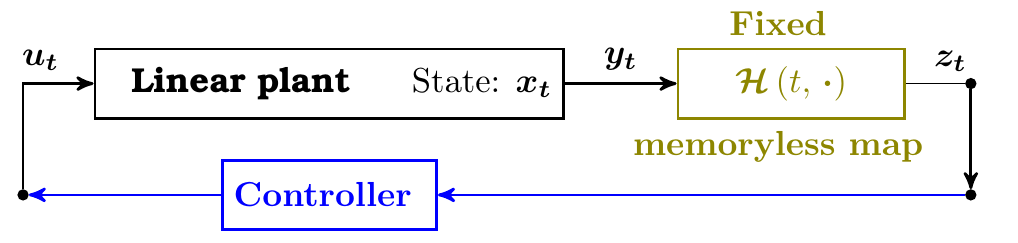}
\caption{Setup of Bar-Shalom and Tse}
\label{blockDiagramBarShalomTse}
\end{figure}
Bar-Shalom and Tse~\cite{barShalomtse1974dualEffectPaper}
consider the loop shown in Figure~\ref{blockDiagramBarShalomTse}.
At the sensor, instead of our dynamic encoder, they place a  nonlinear map.
This sensor map is time-varying but memoryless and its exact functional form
is given. For this setup, they have a result stating the mutual
exclusivity of the dual effect and optimality of certainty equivalence controls.
In their setting, if the linear `plant'  is such that the effect of controls is
never felt at the observation signal~$y_t$, then clearly there is no dual effect.
This happens in the case where the so-called `plant' has a sub-system that
produces the `plant' output after explicitly removing the effect of controls.

However, for our setup~(Figure~\ref{simpleLoop}), the sensor has a dynamic encoder
even after one performs the equivalence transformation by subtracting out the
effect of controls. The use of `innovation coding' leads to the closed loop shown in
Figure~\ref{blockDiagramEquivalentEncoder}. The crucial difference from the setup of
Bar-Shalom and Tse is that rather than being a memoryless nonlinear map, the
encoder $ \widetilde{\xi}_t $ is a dynamical system. Hence the Theorem
of Bar-Shalom and Tse does not apply. But it springs the following question:
Does the plant-sensor combination in the closed
loop of Figure~\ref{blockDiagramEquivalentEncoder} have a dual effect if an
encoder is used that is optimal for the dynamic
design problem~?
To answer this question, one needs to interpret carefully what it means to
implement an optimal encoder. For different interpretations, one gets
different answers.
\begin{figure}  
\centering
\subfloat[No dual effect present because encoder is controls-forgetting
from time $0$]{%
\label{blockDiagram:noDualEffect}
\includegraphics[width=0.90\textwidth]{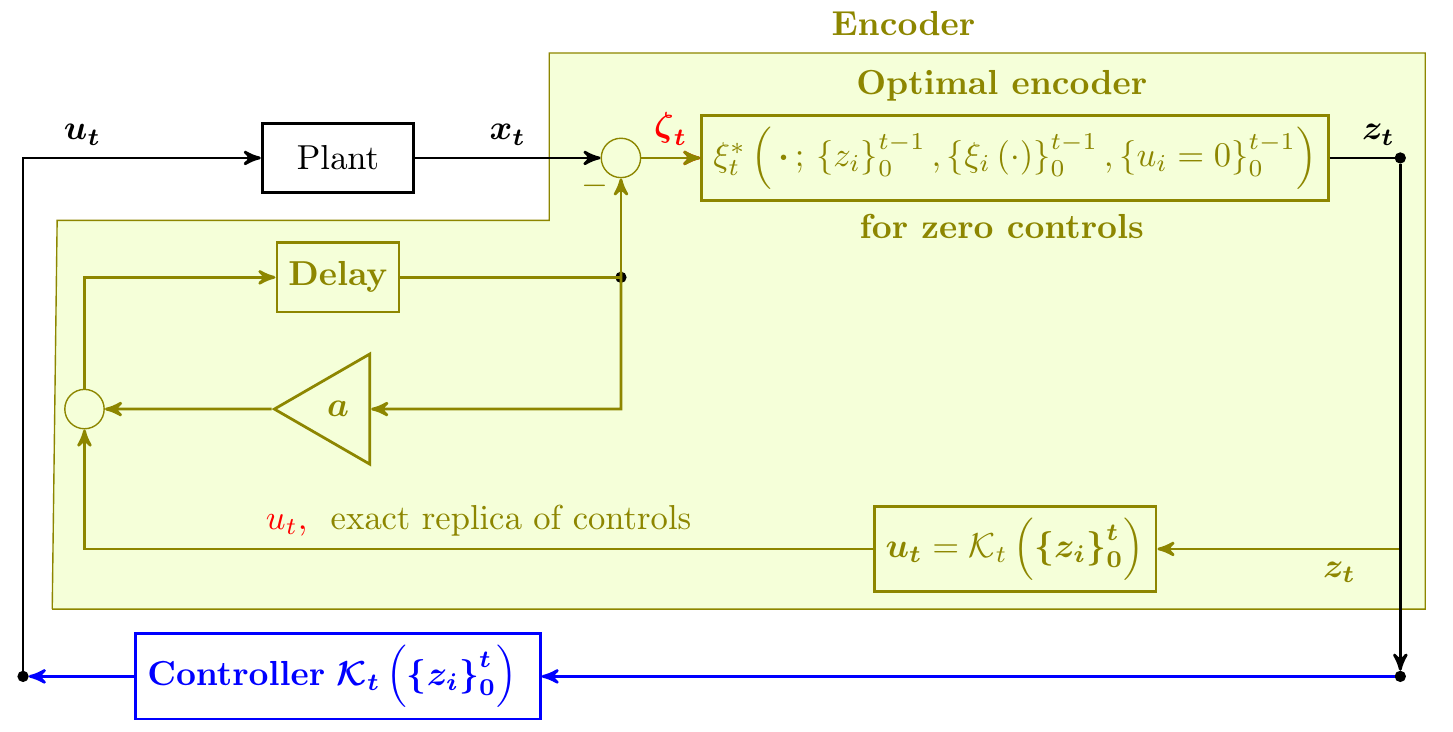}
}
\quad
\subfloat[Has dual effect because $\widetilde{\zeta}_t =
          \zeta_t +  {\sum_{i=0}^{t-1}{ a^{t-1-i}
                       \left\{   {\mathcal{K}}_i \left( z_0^i \right)
                               - {\mathcal{K}}_i^* \left( z_0^i \right)
                      \right\}  }}  \neq \zeta_t$ in general.]{%
\label{blockDiagram:withDualEffect}
\includegraphics[width=0.90\textwidth]{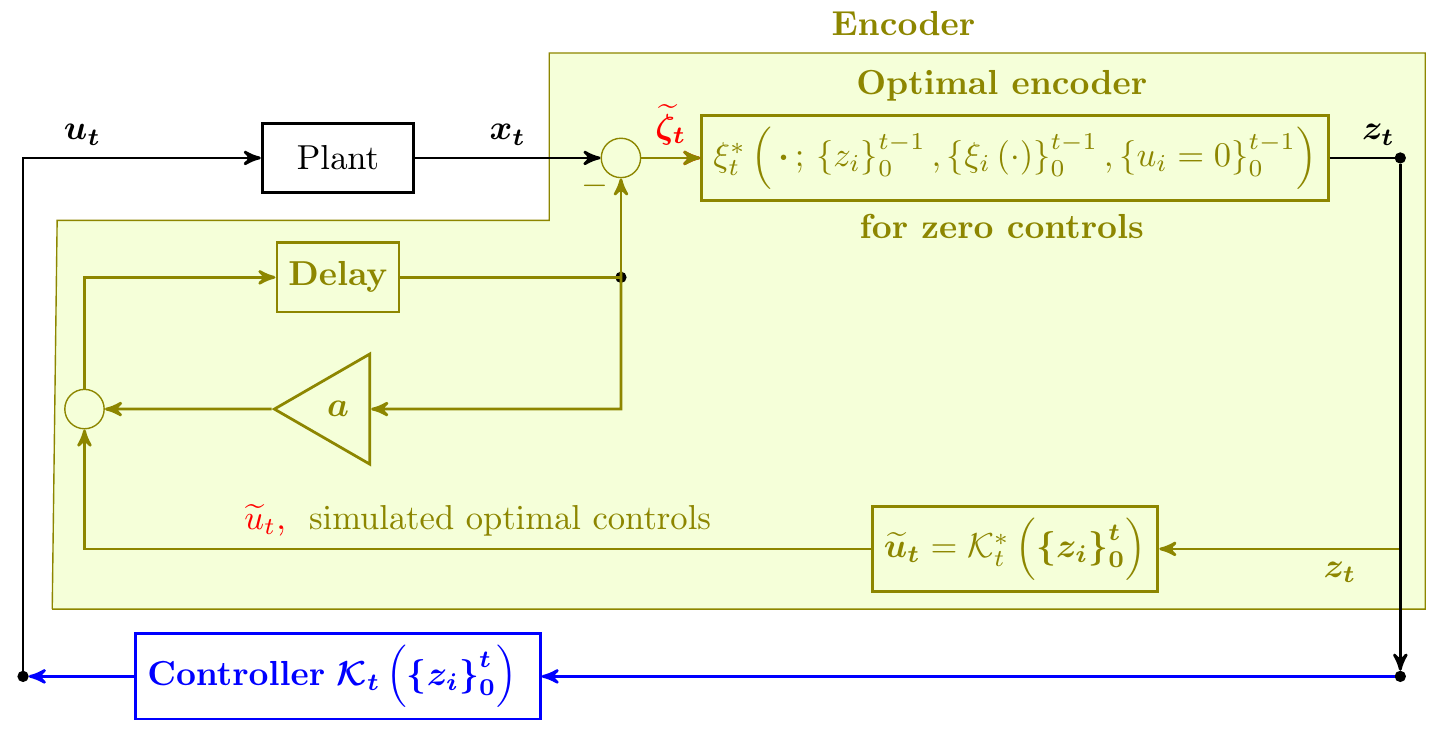}
}
\caption{Two interpretations of implementing an optimal
 encoder ${\mathcal{E}}^*$.
}
\end{figure}
Assume that we are implementing the  feedback loop of Figure~\ref{simpleLoop}
with the optimal encoder and any admissible controller.

The first interpretation of
what it means to implement an optimal encoder, is the following:
  The encoder stores the actual set of control
policies used by the controller, and uses this
to carry out the innovation encoding, and on the
result applies the sequential quantizer
$
    \xi_t^*
        \left( \, \vec{\cdot}  \, ;  \,
               \left\{ z_i \right\}_0^t,
               \left\{ \xi_i  \left( \cdot \right)  \right\}_0^{t-1},
               \left\{ u_i = 0 \right\}_0^{t-1} 
        \right)
$.
This is equivalent to the block diagram of Figure~\ref{blockDiagram:noDualEffect}.
No matter what the actual control policies are,
the controls have no influence
on the input to the sequential quantizer
$
    \xi_t^*
        \left( \, \vec{\cdot}  \, ;  \,
               \left\{ z_i \right\}_0^t,
               \left\{ \xi_i  \left( \cdot \right)  \right\}_0^{t-1},
               \left\{ u_i = 0 \right\}_0^{t-1} 
        \right)
$.
Clearly, because of exact cancellation of controls,
the encoder implemented is controls-forgetting, and there is no dual effect in
the loop of Figure~\ref{blockDiagram:noDualEffect}.

The second interpretation is the following: The encoder does not pay attention
to the actual control policy being used. Instead, it assumes that the controller
is applying the certainty equivalence laws~(\ref{optimalControlLaw}).
It subtracts out the effect of the these certainty equivalence control laws.
To the residue~$\widetilde{\zeta}_{t}$, it applies the sequential
quantizer
$
    \xi_t^*
        \left( \, \vec{\cdot}  \, ;  \,
               \left\{ z_i \right\}_0^t,
               \left\{ \xi_i  \left( \cdot \right)  \right\}_0^{t-1},
               \left\{ u_i = 0 \right\}_0^{t-1} 
        \right)
$.
Clearly this encoder is not controls-forgetting. But yet when used in
combination with the certainty equivalence laws of
(\ref{optimalControlLaw}), it leads to minimum performance cost.

On the other hand, when this encoder is used in combination with a general
admissible control law, there is potential mismatch between the encoder's
assumption and the actual controller behaviour. The effect of the controls
is not  absent in the input to the sequential quantizer
$
    \xi_t^*
        \left( \, \vec{\cdot}  \, ;  \,
               \left\{ z_i \right\}_0^t,
               \left\{ \xi_i  \left( \cdot \right)  \right\}_0^{t-1},
               \left\{ u_i = 0 \right\}_0^{t-1} 
        \right)
$.
This situation is shown in Figure~\ref{blockDiagram:withDualEffect}.
Clearly, there is a dual effect in this loop.

This leads to an interesting consequence.
If a pair of encoding and control strategies is optimal,
then the individual strategies that are components
of the pair must be person-by-person optimal.
Since the combination
of certainty equivalence controls and the corresponding optimal encoder
is optimal, it follows that the certainty equivalence
controls must be optimal for the single-agent control
problem obtained by fixing the encoder to be the
optimal one. Since the second
interpretation of implementing the optimal encoder
is perfectly valid, it turns out that certainty equivalence
controls can be optimal even though the dual effect is
present in the loop. Thus we can conclude that
the Theorem of Bar-Shalom and Tse cannot generalize to
the scenario where sensors implement dynamic encoders.
\subsection*{Acknowledgements}
Adam Molin first suggested to us that for event-triggered systems, the separated solution may not be optimal for the
zero order hold control design problem, and yet be optimal for the dynamic
encoder-controller design
problem.
To Lei Bao, Mikael Skoglund, Henrik Sandberg, Ashutosh Nayyar,
John~S.~Baras, and Armand Makowski we are grateful for many discussions over the
last few years.
C.R. and K.H.J. greatfully acknowledge support from the Swedish Research Council
and the Knut and Alice Wallenberg Foundation.

\bibliographystyle{siam}

\def\polhk#1{\setbox0=\hbox{#1}{\ooalign{\hidewidth
  \lower1.5ex\hbox{`}\hidewidth\crcr\unhbox0}}} \def\cprime{$'$}

\appendix

\section{Calculations for Example~\ref{example:predictionErrorDependence}} \label{App:ExplicitCalcEx3}

In Example~\ref{example:predictionErrorDependence}, we explicitly show the dependence of the second moments of~$\xbar{w}_0$ and $x_1 - {\widehat{x}}_{1\left|1\right.} $ on the applied controls when using a quantizer in its predictive form. Below, we show how to compute these terms. From the definition of $\xbar{w}_t$, we have:
\begin{align*}
\xbar{w}_0  & = {\mathbb{E}} \left[ { {x}}_{1}
                       \left| z_0, z_1 \right. \right]
               -  {\mathbb{E}} \left[ { {x}}_{1}
                       \left| z_0\right. \right] \\
& = {\mathbb{E}} \left[ { {x}}_{1}
                       \left| z_0, z_1 \right. \right]
               -  {\mathbb{E}} \left[ { {x}}_{0}
                       \left| z_0\right. \right]  - u_0.
\end{align*}
We can find an expression for the term $\mathbb{E} \left[ { {x}}_{0} \left| z_0\right. \right]$, as shown below.
\begin{align*}
{\mathbb{P}} \left[ z_0 = -1 \right]
    & = \int_{-\infty}^0{ {\frac{1}{\sqrt{2\pi} \sigma_0 } }
                          {e^{- {\left(  \theta-\mu_0 \right) }^2 / {2 \sigma_0^2 }  }}  {d\theta}                         }
, \\
   & = {\frac{1}{2 }} \left[ 1 + {\rm{erf}} \left( {\frac{-\mu_0}{\sqrt{2}\sigma_0}} \right)  \right]
,
\end{align*}
\begin{align*}
{\mathbb{P}} \left[ z_0 = -1 \right] \times
  {\mathbb{E}} \left[ x_0 \left| z_0 = -1 \right. \right]
    & = \int_{-\infty}^0{ {\frac{1}{\sqrt{2\pi} \sigma_0 } }
                         \; \theta \, {e^{- {\left( \theta-\mu_0 \right) }^2 / {2 \sigma_0^2 }  }}  {d\theta}                         }
, \\
& = \mu_0 \times {\frac{1}{2 }} \left[ 1 +   {\rm{erf}} \left( {\frac{-\mu_0}{\sqrt{2}\sigma_0}} \right)  \right] - {\frac{\sigma_0}{\sqrt{2\pi} }
 \left[ e^{- {\mu_0}^2 /2\sigma_0^2{} } \right]}, \\
& = \mu_0 \times {\mathbb{P}} \left[ z_0 = -1 \right]
 - {\frac{\sigma_0}{\sqrt{2\pi} }
 \left[ e^{- {\mu_0}^2 /2\sigma_0^2{} } \right]}
\end{align*}
where $ {\rm{erf}} \left( x \right) \triangleq {\frac{2}{\sqrt{\pi} } }
\int_{0}^x{ e^{-t^2} {dt} }.$
We also have:
\begin{align}
{\mathbb{P}} \left[ z_1 = -1 ,\, z_0 = -1 \right]
    & = \int_{-\infty}^0\int_{-\infty}^0{ {\frac{1}{2\pi\sigma_0\sigma_w}}
        {e^{- {\left(  r-\mu_0 \right) }^2 / {2 \sigma_0^2 }  }}
        \;
       {e^{- {\left(  s-r-u_0 \right) }^2 / {2 \sigma_w^2 }  }}
        \, {dr} {ds}          }
, \notag \\
{\mathbb{P}} \left[ z_1 = -1 \left| z_0 = -1 \right. \right] & =
{\frac{{\mathbb{P}} \left[ z_1 = -1 ,\, z_0 = -1 \right] }
{ {\mathbb{P}} {\left[ z_0 = -1 \right] } }
}
. \notag
\end{align}
We can now find an expression for the terms ${\widehat{x}}_{1\left|1\right.} = \mathbb{E} \left[ { {x}}_{1} \left| z_0, z_1 \right. \right]$ and ${\mathbb{E}} \left[ x_1^2 \left| z_0 = -1 ,\, z_1 = -1 \right.  \right]$ as
\begin{align}
{\mathbb{E}} \left[ x_1 \left| z_0 = -1 ,\, z_1 = -1 \right.  \right]
    & = {\frac{D_1(-\infty,0)}{{\mathbb{P}} \left[ z_1 = -1, \, z_0 = -1 \right]}} \label{Eq:condX1} \\
    D_1(-\infty,0) &= \int_{-\infty}^0\int_{-\infty}^0{ {\frac{1}{2\pi\sigma_0\sigma_w}}
       \, s \,
       {e^{- {\left(  r-\mu_0 \right) }^2 / {2 \sigma_0^2 }  }}
        \;
       {e^{- {\left(  s-r-u_0 \right) }^2 / {2 \sigma_w^2 }  }}
        \, {dr} {ds}          } \notag \\
{\mathbb{E}} \left[ x_1^2 \left| z_0 = -1 ,\, z_1 = -1 \right.  \right]
    & = {\frac{D_2(-\infty,0)}{{\mathbb{P}} \left[ z_1 = -1, \, z_0 = -1 \right]}} \label{Eq:conditionalSecondMonet} \\
    D_2(-\infty,0) &= \int_{-\infty}^0\int_{-\infty}^0{ {\frac{1}{2\pi\sigma_0\sigma_w}}
       \, s^2 \,
       {e^{- {\left(  r-\mu_0 \right) }^2 / {2 \sigma_0^2 }  }}
        \;
       {e^{- {\left(  s-r-u_0 \right) }^2 / {2 \sigma_w^2 }  }}
        \, {dr} {ds}          } \notag
,
\end{align}
with similar expressions for the event $z_0 = -1 \, {\rm{AND}} \, z_1 = +1$. We then can compute:
\begin{align*}
{\mathbb{E}} \left[ \xbar{w}_0^2\left| z_0 = -1 \right.  \right]
 & = {\mathbb{P}} \left[ z_1 = -1 \left| z_0 = -1 \right. \right]
     \times
    {\Bigl(
    {\mathbb{E}} \left[ x_1 \left| z_1 = -1 ,\, z_0 = -1 \right.  \right]
    -  {\mathbb{E}} \left[ x_0 \left| z_0 = -1 \right.  \right] - \alpha
     \Bigr)}^2
\\
 & \,  + {\mathbb{P}} \left[ z_1 = +1 \left| z_0 = -1 \right. \right]
     \times
    {\Bigl(
    {\mathbb{E}} \left[ x_1 \left| z_1 = +1 ,\, z_0 = -1 \right.  \right]
    -  {\mathbb{E}} \left[ x_0 \left| z_0 = -1 \right.  \right] - \alpha
     \Bigr)}^2,
\\
 & = {\mathbb{P}} \left[ z_1 = -1 \left| z_0 = -1 \right. \right]
     \times
    {\Bigl(
    {\mathbb{E}} \left[ x_1 \left| z_1 = -1 ,\, z_0 = -1 \right.  \right]
     \Bigr)}^2
\\
 & \, +  {\mathbb{P}} \left[ z_1 = +1 \left| z_0 = -1 \right. \right]
     \times
    {\Bigl(
    {\mathbb{E}} \left[ x_1 \left| z_1 = +1 ,\, z_0 = -1 \right.  \right]
     \Bigr)}^2
    -  {\Bigl(
       \alpha + {\mathbb{E}} \left[ x_0 \left| z_0 = -1 \right.  \right]
        \Bigr)}^2.
\end{align*}
The events corresponding to $z_0 = +1$, with $z_1 = -1$ and $z_1 = +1$
result in similar expressions. Thus: 
\begin{align*}
{\mathbb{E}} \left[ \xbar{w}_0^2\left| z_0 = +1 \right.  \right]
 & = {\mathbb{P}} \left[ z_1 = -1 \left| z_0 = +1 \right. \right]
     \times
    {\Bigl(
    {\mathbb{E}} \left[ x_1 \left| z_1 = -1 ,\, z_0 = +1 \right.  \right]
    -  {\mathbb{E}} \left[ x_0 \left| z_0 = +1 \right.  \right] - \beta
     \Bigr)}^2
\\
 & \,  + {\mathbb{P}} \left[ z_1 = +1 \left| z_0 = +1 \right. \right]
     \times
    {\Bigl(
    {\mathbb{E}} \left[ x_1 \left| z_1 = +1 ,\, z_0 = +1 \right.  \right]
    -  {\mathbb{E}} \left[ x_0 \left| z_0 = +1 \right.  \right] - \beta
     \Bigr)}^2.
\\
 & = {\mathbb{P}} \left[ z_1 = +1 \left| z_0 = +1 \right. \right]
     \times
    {\Bigl(
    {\mathbb{E}} \left[ x_1 \left| z_1 = +1 ,\, z_0 = +1 \right.  \right]
     \Bigr)}^2
\\
 & \, + {\mathbb{P}} \left[ z_1 = -1 \left| z_0 = +1 \right. \right]
     \times
    {\Bigl(
    {\mathbb{E}} \left[ x_1 \left| z_1 = -1 ,\, z_0 = +1 \right.  \right]
     \Bigr)}^2
    -  {\Bigl(
       \beta + {\mathbb{E}} \left[ x_0 \left| z_0 = +1 \right.  \right]
        \Bigr)}^2.
\end{align*}
We also have:
\begin{align*}
\textrm{Var}_{t\left|t\right.}^{\text{err}}
 & = {\mathbb{E}} \left[ \left ( x_1 - {\widehat{x}}_{1\left|1\right.} \right )^2 \left| z_0 = -1
\right.  \right] \\
 & = {\mathbb{P}} \left[ z_1 = -1 \left| z_0 = -1 \right. \right]
     \times
\left\{
    {\mathbb{E}} \left[ x_1^2 \left| z_1 = -1 ,\, z_0 = -1
                          \right. \right]
  -
    {\Bigl(
    {\mathbb{E}} \left[ x_1 \left| z_1 = -1 ,\, z_0 = -1 \right.  \right]
     \Bigr)}^2
\right\}
\\
 & \,  + {\mathbb{P}} \left[ z_1 = +1 \left| z_0 = -1 \right. \right]
     \times
\left\{
    {\mathbb{E}} \left[ x_1^2 \left| z_1 = +1 ,\, z_0 = -1
                          \right. \right]
  -
    {\Bigl(
    {\mathbb{E}} \left[ x_1 \left| z_1 = +1 ,\, z_0 = -1 \right.  \right]
     \Bigr)}^2
\right\} .
\end{align*}
In Figure~\ref{graphsForExamplePredictionError}, ${\mathbb{E}} \left[
\xbar{w}_0^2\left| z_0 = -1 \right.  \right]$ and $\textrm{Var}_{t\left|t\right.}^{\text{err}}$ are plotted against
$\alpha$ to illustrate the presence of a dual effect.

Next, we consider how to compute the conditional expectation~\eqref{Eq:condX1} above.
Consider the scalar linear system:
\begin{align*}
x_{t+1} & = a\,x_t + u_t + w_t, \ t \in \left\{  0, 1, \ldots \right\},
\end{align*}
where $x_0 \sim {\mathcal{N}} \left( \mu_0, \sigma_0 \right),$ and the
noise process $w_t$ is IID with distribution~ ${\mathcal{N}} \left(
0, \sigma_w \right),$ and is independent of current and past controls and
states. Suppose that the control~$u_t$ is causally
computed on the basis of a sequence of quantized outputs:~$z_t$.
Consider the quantizer~$Q_t\left(\cdot\right)$:
\begin{align*}
Q_t(x_t) & = i, \ {\text{if}} \ x_i\in\left(\theta_{i-1} , \theta_{i}\right),
\end{align*}
where the~$\theta_i$s are measurable with respect to the
partial sequence~$z_0^{t-1}.$ Under this setup, the conditional expectation of the state at time $1$ in~\eqref{Eq:condX1} can be expressed as
\begin{align*}
\widehat{x}_{1|1} &= {\mathbb{E}} \left[ x_1  \left| x_0 \in \left(\theta_{i-1} ,
\theta_{i}\right), x_1 \in \left(\varpi_{j-1} ,
 \varpi_{j}\right)  \right.  \right],
\end{align*}
where for notational clarity, we have denoted the quantization
levels at time~$0$ with the letter~$\theta,$ and the levels at time~$1$ with~$\eta.$ Computing $\widehat{x}_{1|1}$ requires
computation of the following double integral from~\eqref{Eq:condX1}:
\begin{align*}
D_1 &= \frac{1}{2 \pi \sigma_0 \sigma_w}
        \int_{\varpi_{j-1}}^{\varpi_{j}}{
       \int_{\theta_{i-1}}^{\theta_i}{ s \,
                e^{ - \frac{1}{2} \left(  \frac{r^2}{\sigma_0^2}
                - 2r \frac{\mu_0}{\sigma_0^2}
               + \frac{\mu_0^2}{\sigma_0^2}
               + \frac{(s-u_0)^2}{\sigma_w^2}
               + \frac{a^2r^2}{\sigma_w^2}
             - 2r \frac{a(s-u_0)}{\sigma_w^2}
         \right)}
          \ dr }
          \ ds }.
\end{align*}
Let $\bar{\sigma}$ and $\bar{\mu}$ be defined as follows:
\begin{align*}
\frac{1}{\bar{\sigma}^2} &= \frac{1}{\sigma^2_0} +
\frac{a^2}{\sigma_w^2}, \\
\bar{\mu_s} &= \bar{\sigma}^2 \left(\frac{\mu_0}{\sigma_0^2} +
\frac{a(s-u_0)}{\sigma_w^2} \right).
\end{align*}
Denote by $g(r)$
the standard Gaussian probability density $e^{-{r^2}/2} / {\sqrt{2
\pi}} $, and by $G(r)$, its CDF $\int_{- \infty}^r g(s) ds$.
Then, we can write:
\begin{align}
D_1 &= \frac{\bar{\sigma}}{\sqrt{2 \pi} \sigma_0 \sigma_w} \ e ^{-
\frac{\mu_0^2}{2 \sigma^2_0}}
\int_{\varpi_{j-1}}^{\varpi_{j}}  s \ e^{- \frac{(s-u_0)^2}{2
\sigma_w^2} + \frac{\bar{\sigma}^2}{2} \left(\frac{\mu_0}{\sigma_0^2} +
\frac{a(s-u_0)}{\sigma_w^2} \right)^2}
\int_{\theta_{i-1}}^{\theta_i} \frac{1}{\sqrt{2 \pi} \bar{\sigma}}
\ e ^{- \frac{1}{2 \bar{\sigma}^2} (r - \bar{\mu}_s)^2} dr \ ds, \notag \\
 &
= (a{\mu_0} + u_0)
   \cdot \mathbb{P} \left[ x_0 \in \left( \theta_{i-1}, \theta_i
\right),
                                   x_1 \in \left(\varpi_{j-1}, \varpi_{j}
\right)
                          \right] \notag \\
& \ \ \ + \sqrt{a^2\sigma_0^2 + \sigma_w^2} \cdot \int_{ \frac{\varpi_{j-1} - a{\mu_0} - u_0}
{\sqrt{a^2\sigma_0^2 + \sigma_w^2}}}^{
                        \frac{\varpi_{j} - a{\mu_0} -u_0 }
{\sqrt{a^2\sigma_0^2 + \sigma_w^2}}}
\ \tilde{s} \ g \left( \tilde{s} \right ) \
\left\{
G \left(\xbar{A}  - B \tilde{s} \right)
- G \left( \underline{A} - B \tilde{s} \right)
\right\} \ d{\tilde{s}}, \label{Eq:IntegralD1}
\end{align}
where $\xbar{A} = (\theta_{i} -  \mu_0) / \bar{\sigma}$, $\underline{A} = (\theta_{i-1} -  \mu_0) / \bar{\sigma} $ and $B = a \sigma_0 / \sigma_w $.
The first term above can be calculated using established routines for
calculating multivariable normal probabilities~(MATLAB command
{\texttt{mvncdf}}).
The second term can be explicitly calculated using the following indefinite
integral found in Owen's table of Normal integrals~\cite{owen1980tableOfNormalIntegrals}:
\begin{align*}
\int x \ g(x) \ G (A - B x) dx = &-\frac{B}{\sqrt{1+
B^2}} \ g \left(\frac{A}{\sqrt{1+ B^2}} \right) \ G \left(x
\sqrt{1+ B^2} - \frac{A B}{\sqrt{1+ B^2}} \right) \\
&- G (A - B x) \ g(x).
\end{align*}
Finally, the conditional expectation in~\eqref{Eq:conditionalSecondMonet}
requires evaluating an integral of the form:
\begin{align*}
D_2 & \triangleq \frac{1}{2 \pi \sigma_0 \sigma_w}
        \int_{\varpi_{j-1}}^{\varpi_{j}}{
       \int_{\theta_{i-1}}^{\theta_i}{ \ s^2 \,
                e^{ - \frac{1}{2} \left(  \frac{r^2}{\sigma_0^2}
                - 2r \frac{\mu_0}{\sigma_0^2}
               + \frac{\mu_0^2}{\sigma_0^2}
               + \frac{(s-u_0)^2}{\sigma_w^2}
               + \frac{a^2r^2}{\sigma_w^2}
             - 2r \frac{a(s-u_0)}{\sigma_w^2}
         \right)}
          \ dr }
          \ ds }, \\
 & =
 \widetilde{\sigma}^2 \int_{ \frac{\varpi_{j-1} - a{\mu_0} - u_0}
{\sqrt{a^2\sigma_0^2 + \sigma_w^2}}}^{
                        \frac{\varpi_{j} - a{\mu_0} -u_0 }
{\sqrt{a^2\sigma_0^2 + \sigma_w^2}}}
\ \tilde{s}^2 \ g \left( \tilde{s} \right ) \
\left\{
G \left(\xbar{A}  - B \tilde{s} \right)
- G \left( \underline{A} - B \tilde{s} \right)
\right\} \ d{\tilde{s}}
\\
 & +
2  \widetilde{\sigma} \left( u + a\mu_0\right)  \int_{ \frac{\varpi_{j-1} - a{\mu_0} - u_0}
{\sqrt{a^2\sigma_0^2 + \sigma_w^2}}}^{
                        \frac{\varpi_{j} - a{\mu_0} -u_0 }
{\sqrt{a^2\sigma_0^2 + \sigma_w^2}}}
\ \tilde{s} \ g \left( \tilde{s} \right ) \
\left\{
G \left(\xbar{A}  - B \tilde{s} \right)
- G \left( \underline{A} - B \tilde{s} \right)
\right\} \ d{\tilde{s}} \\
 & +
 {\left( u + a\mu_0\right)}^2  \int_{ \frac{\varpi_{j-1} - a{\mu_0} - u_0}
{\sqrt{a^2\sigma_0^2 + \sigma_w^2}}}^{
                        \frac{\varpi_{j} - a{\mu_0} -u_0 }
{\sqrt{a^2\sigma_0^2 + \sigma_w^2}}}
\ g \left( \tilde{s} \right ) \
\left\{
G \left(\xbar{A}  - B \tilde{s} \right)
- G \left( \underline{A} - B \tilde{s} \right)
\right\} \ d{\tilde{s}} ,
\end{align*}
where $\widetilde{\sigma} \triangleq \sqrt{a^2\sigma_0^2 + \sigma_w^2}$.
Let
$
 h(x) = x g(x) \left\{ G\left( \xbar{A} - B x \right) -
                         G\left( \underline{A} - B x \right) \right\}.
$
Then,
\begin{align*}
 {d\left[h(x)\right]} & =
      g(x) \left\{ G\left( \xbar{A} - B x \right) -
                   G\left( \underline{A} - B x \right) \right\} \, {dx} \\
& -
     x^2  g(x) \left\{ G\left( \xbar{A} - B x \right) -
                       G\left( \underline{A} - B x \right) \right\} \, {dx} \\
& -
   B  x  g(x) \left\{ g\left( \xbar{A} - B x \right) -
                        g\left( \underline{A} - B x \right) \right\} \, {dx}.
\end{align*}
Hence, the first term of $D_2$ is
\begin{align*}
D_3 & \triangleq
 \widetilde{\sigma}^2 \int_{ \frac{\varpi_{j-1} - a{\mu_0} - u_0}
{\sqrt{a^2\sigma_0^2 + \sigma_w^2}}}^{
                        \frac{\varpi_{j} - a{\mu_0} -u_0 }
{\sqrt{a^2\sigma_0^2 + \sigma_w^2}}}
\ \tilde{s}^2 \ g \left( \tilde{s} \right ) \
\left\{
G \left(\xbar{A}  - B \tilde{s} \right)
- G \left( \underline{A} - B \tilde{s} \right)
\right\} \ d{\tilde{s}}
\\
& =
\widetilde{\sigma}^2
{\left[h(x)\right]}^{\widetilde{\eta}_j}_{\widetilde{\eta}_{j-1}}
 + \widetilde{\sigma}^2 {\mathbb{P}} \left[ x_0 \in \left( \theta_{i-1}, \theta_{i} \right), x_1 \in \left( \varpi_{j-1} , \varpi_{j} \right) \right]
\\
&{} - \widetilde{\sigma}^2 B \int^{\widetilde{\eta}_j}_{\widetilde{\eta}_{j-1}}
   \ \ x \ g(x) \left\{ g\left( \xbar{A} - B x \right) -
                        g\left( \underline{A} - B x \right) \right\} \, {dx},
\end{align*}
where $\widetilde{\eta}_{l} = (\eta_l - a{\mu_0} -u_0 ) / \sqrt{a^2\sigma_0^2 + \sigma_w^2}$. To calculate the last integral, we can use the result found in Owen's table of Normal integrals~\cite{owen1980tableOfNormalIntegrals}:
\begin{align*}
\int \ x \ g(x) \ g(A-Bx) \ dx \
& =
- {\frac{1}{1+B^2}} \ g \left ( A/\sqrt{1+B^2} \right ) \ g \left( x \sqrt{1+B^2} - AB/\sqrt{1+B^2} \right )
\\
&
+ {\frac{AB}{(1+B^2)^{3/2}}} \ g \left ( A/\sqrt{1+B^2} \right ) \ G \left( x \sqrt{1+B^2} - AB/\sqrt{1+B^2} \right ) .
\end{align*}

By using the above expressions in~\eqref{Eq:condX1} and~\eqref{Eq:conditionalSecondMonet}, we can evaluate the terms ${\mathbb{E}} \left[
\xbar{w}_0^2\left| z_0 = -1 \right.  \right]$ and $\textrm{Var}_{t\left|t\right.}^{\text{err}}$ for different values of $\alpha$, and plot its dependence in Figure~\ref{graphsForExamplePredictionError}.

\section{Evaluating the cost-to-go $V_0$ in Example~\ref{Ex:CertaintyEquivalence_EncDes}} \label{App:ExplicitCalc}
In Example~\ref{Ex:CertaintyEquivalence_EncDes}, to compute the cost-to-go $V_0$, we must be able to compute the term $\mathbb{E} \left[ \widehat{x}_{1|1}^2 {}\big|{} z_0 \right]$. Using the expression for $\widehat{x}_{1|1}$ (see~\eqref{Eq:condX1}) in Appendix~\ref{App:ExplicitCalcEx3},
the desired quantity $\mathbb{E} \left[ \widehat{x}_{1|1}^2 {}\big|{} z_0 \right]$ can be written as
\begin{align*}
\mathbb{E} \left[ \widehat{x}_{1|1}^2 {}\big|{} z_0 \right] &= \sum_{j=1}^N \mathbb{P}\left( x_1 \in (\varpi_{j-1},\varpi_{j}) \; \big| \; x_0 \in (\theta_{i-1},\theta_i) \right) \cdot \left(\mathbb{E} \left[ x_1 {}\big|{} z_0 = l, z_1 = j \right]\right)^2 \\
&= \frac{1}{\mathbb{P}\left(x_0 \in \left(\theta_{i-1},\theta_i \right) \right)} \sum_{j=1}^N \frac{ D_1 }{ \mathbb{P}\left(x_0 \in (\theta_{i-1},\theta_i), x_1 \in (\varpi_{j-1},\varpi_{j}) \right)} \; .
\end{align*}
In the above expression, note that $D_1$ depends on the quantizer cell $(\varpi_{j-1},\varpi_{j})$ and has been evaluated in~\eqref{Eq:IntegralD1} in Appendix~\ref{App:ExplicitCalcEx3}. Also, the second term in~\eqref{Eq:IntegralD1} has been denoted by $\vartheta(\underline{r},\bar{r})$ in~\eqref{Eq:lambda} of Example~\ref{Ex:CertaintyEquivalence_EncDes}.
By setting $\mu_0 = 0$ as per Example~\ref{Ex:CertaintyEquivalence_EncDes} in the expression for $D_1$ above, the cost-to-go to be minimized can be rewritten as in~\eqref{Eq:CostToGoV0_Rewritten}.


\end{document}